\documentclass[12pt]{article}

\textwidth 7in
\textheight 9in
\oddsidemargin -.5in
\evensidemargin -.5in
 \topmargin -.2in
\usepackage{mathptmx}
\usepackage{fancyhdr, amssymb, amsmath, amsthm, enumerate, enumitem, lastpage}     
\usepackage{mathrsfs}
\usepackage{tikz, tikz-cd}
\usepackage{array}
\usepackage{fmtcount}
\usepackage{multirow}
\usepackage{mathtools}
\usepackage{hyperref}
\usepackage{cite, url}
\newtheorem{thm}{Theorem}[section]
\newcounter{introthm}[section]
\newenvironment{introthm}[1][]{ \refstepcounter{introthm}\par\medskip
   \noindent \textbf{Theorem \Alph{introthm}: #1} \itshape\rmfamily}{}
\newenvironment{introprop}[1][]{ \refstepcounter{introthm}\par\medskip
   \noindent \textbf{Proposition \Alph{introthm}: #1} \itshape\rmfamily}{}

\newenvironment{remark}[1][]{ \refstepcounter{thm}\par\medskip
   \noindent \textit{Remark \arabic{section}.\arabic{thm}: #1} \rmfamily}{}
\newenvironment{question}[1][]{ \refstepcounter{thm}\par\medskip
   \noindent \textbf{Question \arabic{section}.\arabic{thm}: #1} \rmfamily}{}
\newenvironment{obs}[1][]{ \refstepcounter{thm}\par\medskip
   \noindent \textbf{Observation \arabic{section}.\arabic{thm}: #1} \rmfamily}{}
\newenvironment{const}[1][]{ \refstepcounter{thm}\par\medskip
   \noindent \textbf{Construction \arabic{section}.\arabic{thm}: #1} \rmfamily}{}
\newenvironment{emple}[1][]{ \refstepcounter{thm}\par\medskip
   \noindent \textbf{Example \arabic{section}.\arabic{thm}: #1} \rmfamily}{}
\newtheorem{lemma}[thm]{Lemma}
\newtheorem{prop}[thm]{Proposition}
\newtheorem{defn}[thm]{Defintion}
\newtheorem{cor}[thm]{Corollary}
\newtheorem{claim}{Claim}[section]
\newcommand{\kpdeg}{[\kappa \, : \, \kappa^p]}

\newcommand{\trc}[3]{\mathrm{trace}_{#1/#2} \left( #3 \right)}
\newcommand{\tray}[2]{\mathrm{Tr} \left( #1, #2 \right)}

\newcommand{\wt}[1]{\widetilde{#1}}
\newcommand{\mc}[1]{\mathcal{#1}}
\newcommand{\ehk}[1]{e_{\mathrm{HK}}\left(#1\right)}
\newcommand{\ehkideal}[2]{e_{\mathrm{HK}}\left(#1, #2\right)}
\newcommand{\len}[2]{\lambda_{#1}\left(#2\right)}
\newcommand{\height}[1]{\mathrm{ht} \left(#1\right)}
\newcommand{\depth}[2]{\mathrm{depth}_{#1} \left(#2\right)}
\newcommand{\col}[3]{\left( #1 \, :_{#3} \, #2 \right)}
\newcommand{\hgy}[2]{\mathbf{H}_{#1}\!\!\left(#2\right)}

\newcommand{\knl}[1]{\mathrm{Ker} \left(#1\right)}

\newcommand{\minh}[1]{\mathrm{minh} \left(#1\right)}
\newcommand{\spec}[1]{\mathrm{Spec}\left(#1\right)}
\newcommand{\im}[1]{\mathrm{Im} \left(#1\right)}

\newcommand{\supp}[2]{\mathrm{Supp}_{#1}{\left(#2\right)}}
\newcommand{\mf}[1]{\mathfrak{#1}}
\newcommand{\ann}[2]{\mathrm{Ann}_{#1}\left(#2\right)}

\newcommand{\fbp}[1]{\left[ #1\right]}
\newcommand{\unideal}[1]{\left(\underline{#1}\right)}
\newcommand{\AR}[3]{\mathrm{AR}_{#3}\left(#1 \, | \, #2\right)}
\newcommand{\ol}[1]{\overline{#1}}
\newcommand{\indt}{\hspace{.25in}}
\newcommand{\rank}[2]{\mathrm{rank}_{#1}\left(#2\right)}

\newcommand{\disc}[2]{D_{#1} \left(#2\right)}
\newcommand{\mtiny}[1]{\scriptscriptstyle{#1}}
\newcommand{\olper}[1]{\ol{#1}_{\mtiny{\unideal{f+\epsilon}}}}
\newcommand{\per}[1]{#1_{\mtiny{\unideal{f+\epsilon}}}}
\newcommand{\ul}[1]{\underline{#1}}
\newcommand{\ulper}[1]{\ul{#1}_{\mtiny{\unideal{f+\epsilon}}}}
\newcommand{\hm}[3]{\mathrm{Hom}_{#1}\left(#2, \, #3\right)}
\newcommand{\nilrad}[1]{\mathrm{nilrad}\left(#1\right)}
\title{}
\author{}
\date{}
\fancypagestyle{plain}{
  \fancyhf{}
  \fancyhead[R]{Perturbations in Noetherian Local Rings}
  \fancyfoot[R]{\thepage\  / \pageref{LastPage}}
}
\pagestyle{plain}

\begin{document}
\maketitle
\begin{center}
\text{\Large $\mf{m}$-adic Perturbations in Noetherian Local Rings} \\
\small{\textbf{Nick Cox-Steib} \,\, \textit{noc3md@mail.missouri.edu} \\ University of Missouri}
\end{center}
\begin{abstract}
We develop new methods to study $\mf{m}$-adic stability in an arbitrary Noetherian local ring.  These techniques are used to prove results about the behavior of Hilbert-Samuel and Hilbert-Kunz multiplicities under fine $\mf{m}$-adic perturbations.
\end{abstract}
\flushleft{}
\section{Introduction}
\indt The topic of this paper involves comparing a fixed ideal, $I = (f_1, \dots, f_c) = \unideal{f},$ inside a Noetherian local ring, $(R, \mf{m}_R),$ with ideals of the form $(f_1 + \epsilon_1, f_2 + \epsilon_2, \dots, f_c + \epsilon_c) = \unideal{f+\epsilon},$ where $\epsilon_1, \dots, \epsilon_c$ are in some large power of the maximal ideal of $R$.  When it is in the interest of clarity and space and the ideal $I$ is understood, we often use the notation
\begin{align*}
	\ol{M} := \dfrac{M}{IM} = \dfrac{M}{\unideal{f}M}, \,\,\,\,\,\, \text{  and  } \,\,\,\,\,\, \olper{M} := \dfrac{M}{\unideal{f+\epsilon}M}
\end{align*}
where $M$ is an $R$-module.  We will refer to the ideals $\unideal{f+\epsilon}$, as $\mf{m}_R$-adic {\it perturbations} of $I,$ and, more generally, we may also speak of the quotients $\olper{M}$ as {\it perturbations} of $\ol{M}.$ At the most basic level, we want to understand the relationship between $I$ and its perturbations.  \\
\indt Recent interest in these problems stems from their relationship to the \textit{deformation} of properties.  This relationship has been formalized in a recent paper by De Stefani and Smirnov with the concept of $\mf{m}$\textit{-adic stability} \cite{AIstab}. A property, $\mc{P},$ of Noetherian local rings is $\mf{m}$\textit{-adically stable} if it is stable under sufficiently small perturbations of ideals generated by a regular element --- that is, for every regular element $f$ in a Noetherian local ring $(R, \mf{m})$ such that $R/(f)$ has property $\mc{P},$ there is an $N \in \mathbb{N}$ such that $R/(f+\epsilon)$ also has property $\mc{P}$ for all $\epsilon \in \mf{m}^N.$  \\
\indt Recall that a property of Noetherian local rings, $\mc{P},$ is said to \textit{deform} if whenever $x \in R$ is a regular element in $(R, \mf{m}_R)$ and $R/(x)$ has property $\mc{P},$ then $R$ has $\mc{P}.$  Regular, Cohen-Macaulay and Gorenstein are all properties that are known to deform in this sense, and whether a given property does or does not deform is often of fundamental interest.  De Stefani and Smirnov show, for properties satisfying mild conditions, $\mf{m}$-adically stability implies deformation (Theorem A of \cite{AIstab}).   \\
\indt The fluctuation of numerical measures of singularity under $\mf{m}$-adic perturbations has also generated interest.  In their foundational paper \cite{ST} Srinivas and Trivedi investigate how the Hilbert function changes when the defining equations of a quotient are perturbed.  Using an ingenious Artin-Rees argument they show, in general, that the Hilbert function can only decrease when the perturbations are sufficiently small (lemma 3, \cite{ST}).  This investigation has been continued in a recent paper of Ma, Quy, and Smirnov, where they showed that if $I$ is generated by a filter-regular sequence a stronger result holds --- in this case, the Hilbert function is actually an invariant of sufficiently small $\mf{m}$-adic perturbations of $I$ \cite{Mafilter}.  In a 2018 paper Polstra and Smirnov asked these kinds of questions about $F$-invariants in Cohen-Macaulay rings.  They prove, when $I \subset R$ is a parameter ideal in a Cohen-Macaulay, $F$-finite local ring, that the Hilbert-Kunz multiplicity exhibits a remarkable kind of $\mf{m}$-adic continuity with respect to small perturbations of the generators of $I$  (see theorem 4.3 of \cite{polstra_smirnov_2018} and corollary 3.7 of \cite{AIstab} as well as Theorem \ref{IllyaThomasHK} below for a statement). They prove a similar result about $F$-signature in $F$-finite Gorenstien rings (theorem 3.11 \cite{polstra_smirnov_2018}).  This research is continued in \cite{AIstab} and \cite{polstra2020fpurity}.\\
\indt The idea of perturbing the equations defining a singularity is quite natural, and from this perspective, interest in $\mf{m}$-adic perturbations is not new to commutative algebra.  For example in \cite{hironaka1965equivalence}, building on work of Samuel \cite{samuel1956algebricite}, Hironaka considers perturbations inside of a power series ring $S = k[[x_1, \dots, x_n]].$  His work shows that if $S/I$ is a reduced equidimensional isolated singularity, then for any $J \subset S$ obtained as a small enough perturbation of the generators of $I$, satisfying the additional conditions that $\height{J} = \height{I}$ and that $J \subset S$ also defines a reduced equidimensional isolated singularity, there is an automorphism $\sigma: S \to S$ taking $I$ to $J$, $\sigma(I) = J.$  This work was continued by Cutkosky and Srinivasan in \cite{10.2307/2375013, cutkosky1997equivalence}.  The behavior of complexes under perturbations of the differentials is investigated in \cite{E1974} and \cite{eisenbud2005finiteness}, with interesting applications to regular sequences and depth.  Related questions are explored in \cite{huneke1997height} and \cite{eisenbud1977remarks}. \\
\subsection{Summary of Main Results}
\indt All of the results described above impose strict restrictions on the ideals being perturbed or the kinds of perturbations allowed, and most make strong assumptions about the properties of the ambient ring  \cite{samuel1956algebricite, hironaka1965equivalence, 10.2307/2375013, cutkosky1997equivalence, Mafilter, polstra_smirnov_2018}.  The goal of this paper is to introduce an approach to studying $\mf{m}$-adic perturbations of ideals that works in an arbitrary Noetherian local ring.  As an application, we use these methods to establish new results about the $\mf{m}$-adic behavior of Hilbert-Samuel and Hilbert-Kunz multiplicity. \\
\indt In the statements below, $\hgy{1}{I; M}$ denotes the Koszul homology of $I$ with coefficients in the $R$-module $M$ (note that, in general, the computation of $\hgy{1}{I; M}$ does not commute with localization --- see section \ref{notation} for details).  
\begin{introthm} \label{introMults}
Suppose that $(R, \mf{m}_R)$ is an equicharactersitic Noetherian local ring, and that $I \subset R$ is an ideal.  Set $\ol{R} = R/I,$ and assume that $a := \dim \ol{R} \ge 1.$ 
\begin{itemize} 
\item[(A.1)] (Theorem \ref{HSper}) Let $M$ be a finite $R$-module.  Assume that 
\begin{align*}
	\dim_R \hgy{1}{I; M} = \dim \dfrac{R}{\ann{R}{\hgy{1}{I; M}}} < a \, .
\end{align*}
Then, for any $\mf{m}_R$-primary ideal $J \supset I,$ there is a $T \in \mathbb{N}$ such that, for all minimal generators $(f_1, \dots, f_c) = I$ and all $\epsilon_1, \dots, \epsilon_c \in \mf{m}_R^T$, 
	\begin{align*}
		e\left(\, J \ol{R}, \, \ol{M}\right) = e\left(\, J \olper{R}, \,  \olper{M}\right) \, .
	\end{align*}	
\item[(A.2)] (Theorem \ref{HKThm}) Suppose that $(R, \mf{m}_R)$ has positive characteristic $p > 0$ and is $F$-finite.  Assume the following three conditions hold:
\begin{itemize}
\item[(i)] $\widehat{R}/I\widehat{R}$ is equidimensional ;
\item[(ii)] $\dim_{R/I} \hgy{1}{I; R} < a$ ;
\item[(iii)]  $\dim_{\widehat{R}/I\widehat{R}} \nilrad{\widehat{R}/I\widehat{R}} < a$.
\end{itemize}
Let $J \subset R$ be an $\mf{m}_R$-primary ideal. \\
\indt Then, for any $\delta > 0,$ there is a $T \in \mathbb{N}$ such that for all minimal generators $(f_1, \dots, f_c) = I$ and all $\epsilon_1, \dots, \epsilon_c \in \mf{m}_R^T$
\begin{align*}
	\Biggl|\ehk{ J \ol{R}} - \ehk{ J\olper{R}} \Biggr| < \delta \, .
\end{align*}  
\end{itemize}
\end{introthm}
\indt  When $\dim \ol{R} = 0,$ the corresponding results are trivial --- see corollary \ref{nakcordim0}.  Note that while (A.1) is a weaker conclusion than theorem 3.7 of \cite{Mafilter}, the result applies to considerably more general perturbations.  The second statement in theorem A, (A.2), is a substantial generalization of the Hilbert-Kunz continuity result of \cite{polstra_smirnov_2018} to local rings that are not Cohen-Macaulay.  The proofs of these statements are found in section \ref{AppsMult}.  A.2 has the following interesting corollary:
\begin{introthm} (Theorem \ref{HKlimit}) Let $(R, \mf{m}_R, \kappa)$ be a reduced, $F$-finite local ring, and suppose that $I \subset R$ is an ideal.  Assume that the conditions of theorem (A.2) are satisfied, and let $$(f_1, \dots, f_c) = I$$ be any minimal generating set.  Then,
	\begin{align*}
		\lim_{n_1 \to \infty, \dots, n_c \to \infty} \ehk{R[f_1^{1/n_1}, \dots, f_c^{1/n_c}]} = \ehk{R/I}.
	\end{align*}
\end{introthm}
\indt The proofs of (A.1) and (A.2) use the methods developed in sections \ref{SectionBasic} and \ref{TechTools}.  In section \ref{SectionBasic} we introduce the basic setup for the theory.  Not every ideal behaves well when it's minimal generators are perturbed and we require that certain technical conditions are satisfied --- in section \ref{techcon} we define and explore these technical conditions.  With the setup established, the main technical lemmas of the paper are proven in \ref{TechTools}.  With an eye towards future applications, we have made an effort to prove these results in maximum generality.  In section \ref{refine} these technical tools are put into a form more concrete and suitable for the scope of this paper.  For example, we prove the following:
\begin{introthm}
Suppose that $(R, \mf{m}_R)$ and $(A, \mf{m}_A)$ are complete Noetherian local rings, that $I \subset R$ is an ideal, and there is a commuting diagram of local rings of the following form
$$D :\begin{tikzcd}[row sep=large, column sep=large]    
			& R \arrow[d, twoheadrightarrow] \\
		 A \arrow[r, hookrightarrow, "finite", swap] \arrow[ru, hookrightarrow]	 & R/I
 \end{tikzcd}$$
 where the extension $A \xhookrightarrow{} \ol{R}$ is module finite.  Note that, in this case, $I + \mf{m}_AR \subset R$ is $\mf{m}_R$-primary. 
\begin{itemize}
\item[(C.1)] (Lemma \ref{mgenslemma})  Let $T = T(D)$ be any element of $\mathbb{N}$ such that $\mf{m}_R^T \subset \mf{m}_R \left(I + \mf{m}_A R \right).$ \\
\indt Then, for any $(f_1, \dots, f_c) = I$ and any $ \epsilon_1, \dots, \epsilon_c \in \mf{m}_R^T,$ \\
\begin{itemize}
	\item[(i)] the composition $A \to R \to \olper{R}$ is module finite ; \\
    \item[(ii)] Suppose $M$ is a finite $R$-module, and let $\ul{m} = m_1, \dots, m_{\mtiny{\mu_A(\ol{M})}} \in M$ be elements whose images in $\ol{M}$ are minimal $A$-module generators for $\ol{M}.$  Then $\ul{m}$ is mapped to an $A$-minimal generating set in $\olper{M}.$
\end{itemize}
\item[(C.2)] (Theorem \ref{freeper}) Assume that $\depth{\mf{m}_A}{A} \ge 1$, and that
$M$ is a finite $R$-module such that $d \hgy{1}{I; M} = 0$ for some $d \in A$ which is a nzd in $A.$ \\
\indt Let $m_1, \dots, m_n \in M$ be any elements such that the image of 
\begin{align*}
	N := Am_1 + \dots + Am_n
\end{align*}
is a free $A$-module of rank $n$ in $\ol{M}$. \\
\indt Then, there is a $T = T(D, M, N),$ such that for all $\epsilon_1, \dots, \epsilon_c \in \mf{m}_R^T$ and all minimal generators $(f_1, \dots, f_c) = I$ the image of $N$ in $\olper{M}$ is a free $A$-module of rank $n$.
\end{itemize}
\end{introthm}  
\indt  The diagram and conditions appearing in the statements of C are covered in detail in section \ref{SectionBasic}. With the tools in hand, we turn to the proofs of (A.1) and (A.2).  Section \ref{HSsection} covers (A.1), which is a relatively straightforward application of standard techniques.  The proof of (A.2), developed in section \ref{HKsection}, requires more work.  To extend the argument of \cite{polstra_smirnov_2018} we need to use the trace map and discriminant outside of their classical scope.  The details of this, which are 'known to the experts,' are worked out in \ref{traceSec}.  In the process, we prove results about the behavior of trace maps, and generically \'etale extensions under small perturbations (see section \ref{traceSec} for definitions and details).
\begin{introprop}
Suppose that $I \subset R$ is an ideal, and that $(R, \mf{m}_R), (A, \mf{m}_A)$, and $\ol{R} = R/I$ are complete Noetherian local rings related as in the diagram, $D,$ appearing in theorem C. \\
\indt Assume that $\dim \ol{R} \ge 1,$ that $A$ is a normal local domain, and that there is a nonzero $d \in A$ such that $d \hgy{1}{I; R} = 0.$ \\
\begin{itemize}
\item[(D.1)] (Lemma \ref{traceper} and Corollary \ref{Discper}) For any $Q \in \mathbb{N}$ there is a $T \gg 0,$ such that for all minimal generators, $(f_1, \dots, f_c) = I,$ and all $\epsilon_1, \dots, \epsilon_c \in \mf{m}_R^T,$ 
\begin{align*}
	\trc{\ol{R}}{A}{r} - \trc{\olper{R}}{A}{r} \in \mf{m}_A^{Q}
\end{align*}
for every $r \in R.$  And consequently, 
\begin{align*}
	\disc{A}{\ol{R}} - \disc{A}{\olper{R}} \in \mf{m}_A^{Q}.
\end{align*}
\item[(D.2)] (Proposition \ref{genetaleper}) Suppose that $A \xhookrightarrow{} \ol{R}$ is generically \'etale. \\
\indt Then, there is a $T \gg 0$ such that for all minimal generators $(f_1, \dots, f_c) = I$ and any $\epsilon_1, \dots, \epsilon_c \in \mf{m}_R^T,$ the composition $A \xhookrightarrow{} R \to \olper{R}$ is a generically \'etale module finite extension.
\end{itemize}
\end{introprop}
\indt Once this foundation is in place, the basic elements of the argument in \cite{polstra_smirnov_2018} carry over, and we prove (A.2) in section \ref{HKsection}.  The paper concludes with some interesting examples and some suggestions for future research.
\subsection{Acknowledgments} 
\indt The present work is based on the author's thesis.  It would not have been possible without the patience and support of many people.  In particular, the author would like to thank his adviser, Ian Aberbach, for being so generous with his time, guidance and friendship.  The author would also like to thank Thomas Polstra for carefully wading through a draft of this paper and graciously suggesting several valuable improvements.  Above all else, the author is grateful for the unconditional love and support of his wife and family. 
\section{$\mf{m}$-adic Perturbations} \label{SectionBasic}
\subsection{Notation} \label{notation}
\indt Throughout $(R, \mf{m}_R)$ is a Noetherian local ring.  We use $\mu_R(M)$ to denote the minimal number of generators of a finite $R$-module $M.$  For ideals $I \subset R$, $$\minh{R/I} := \{ \mf{p} \in \spec{R} \, | \, \mf{p} \supset I \, \, \text { is minimal,  and } \,\, \dim R/I = \dim R/\mf{p} \}.$$
\begin{defn} An $\mf{m}_R$-adic {\bf perturbation} of $I \subset R,$  of {\bf order} $T$, is an ideal $(f_1 + \epsilon_1, \dots, f_c + \epsilon_c) = \unideal{f+\epsilon} \subset R,$ where $(f_1, \dots, f_c) = I$ are minimal generators, and $\epsilon_1, \dots, \epsilon_c \in \mf{m}_R ^T$. \\
A perturbation is {\bf small} if its order, $T$, is 'large' in some sense which is clear from the context.
\end{defn}
\indt The Artin-Rees lemma makes multiple appearances throughout the paper.  If $J \subset R$ is an ideal and $M' \subset M$ are finite $R$-modules, then $\AR{J}{M' \subset M}{R}$ denotes the corresponding Artin-Rees number --- i.e. letting $t = \AR{J}{M' \subset M}{R}$ we have $J^{n + t} M \cap M' = J^n \left(J^t \cap M'\right) \subset J^nM',$ for all $n \ge 0.$  For elements $g_1, \dots, g_m \subset R$ in a ring $R,$ and $M$ an $R$-module, $$\mathbf{K}\left(g_1, \dots, g_m ; M\right) = \mathbf{K}\left(\unideal{g} ; M\right)$$ is the corresponding Koszul complex on $M.$  The differentials of $\mathbf{K} \left( \unideal{g} ; M \right)$ are denoted by $\partial_{i, \unideal{g}}^M.$  If $J \subset R$ is an ideal in a local ring, $R$, the Koszul complex of $J$ with coefficients in $M$ is $\mathbf{K}\left( J ; M \right) = \mathbf{K} \left( \unideal{g} ; M \right),$ where $J = \unideal{g}$ are minimal generators --- recall that, since we are working in a local ring, this complex, and it's homology $\hgy{*}{J; M},$ are well defined up to isomorphism (see, e.g., the discussion on the bottom of pg 52 of \cite{bh1998}). \\
\subsection{Perturbations, First Results and Diagram $D$}
	\indt Our treatment begins with a fundamental lemma from \cite{ST}.
	\begin{lemma} \label{naklemma}
Suppose that $I$ and $J$ are ideals in a local Noetherian ring $(R, \mf{m}_R).$ \\
Then, for any $\epsilon_1, \dots, \epsilon_c \in \mf{m}_R\left( I + J \right)$ and any generators, $(f_1, \dots, f_c) = I,$ there is an equality
\begin{align*}
	I + J = \unideal{f + \epsilon} + J
\end{align*}  
where $\unideal{f+\epsilon} = \left( f_1 + \epsilon_1, \dots, f_c + \epsilon_c \right).$
\end{lemma}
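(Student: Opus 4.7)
The plan is to set $M := (I+J)/(\langle f+\epsilon\rangle + J)$ and apply Nakayama's lemma to conclude that $M = 0$.

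First I would handle the easy inclusion. Since each $\epsilon_i \in \mf{m}_R(I+J) \subset I + J$, we have $f_i + \epsilon_i \in I + J$, so $\unideal{f+\epsilon} + J \subset I + J$. This gives us that $M$ is a well-defined finitely generated $R$-module (as a quotient of the finitely generated ideal $I + J$).

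For the reverse inclusion, I would work modulo $\unideal{f+\epsilon} + J$. In this quotient, the class of $f_i$ equals the class of $-\epsilon_i$, since $f_i + \epsilon_i \in \unideal{f+\epsilon}$. Now $I + J$ is generated as an $R$-module by $f_1, \dots, f_c$ together with generators of $J$; since $J$ maps to zero in $M$, the module $M$ is generated by the images of $f_1, \dots, f_c$, hence equivalently by the images of $\epsilon_1, \dots, \epsilon_c$. The hypothesis $\epsilon_i \in \mf{m}_R(I+J)$ then forces each such image to lie in $\mf{m}_R M$, so $M = \mf{m}_R M$. Nakayama's lemma yields $M = 0$, which is exactly the missing inclusion $I + J \subset \unideal{f+\epsilon} + J$.

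There is no real obstacle here beyond spotting that the hypothesis $\epsilon_i \in \mf{m}_R(I+J)$ --- as opposed to the weaker $\epsilon_i \in I + J$ --- is precisely what is needed to put the generators of $M$ into $\mf{m}_R M$ and so activate Nakayama. The argument is short and purely formal; no use of the Artin--Rees lemma or further structure of $I,J$ is required.
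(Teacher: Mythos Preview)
Your argument is correct and is essentially identical to the paper's: the paper writes the single chain $I+J \subset \unideal{f+\epsilon}+J+\mf{m}_R(I+J) \subset I+J$ and then invokes Nakayama, which is exactly your observation that $M:=(I+J)/(\unideal{f+\epsilon}+J)$ satisfies $M=\mf{m}_R M$. The only difference is packaging --- you phrase it via the quotient module, the paper via an inclusion of ideals --- and both draw the key inclusion from $f_i=(f_i+\epsilon_i)-\epsilon_i$ with $\epsilon_i\in\mf{m}_R(I+J)$.
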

\begin{proof}
Notice that
	$$I + J \subset \unideal{f+\epsilon} + J + \mf{m}_R\left( I + J \right) \subset I + J + \mf{m}_R \left(I + J\right) = I + J$$
	and now the result follows from Nakayama's lemma.
\end{proof}
\indt Taking $I$ to be an $\mf{m}_R$-primary ideal and setting $J = 0$ in lemma \ref{naklemma}, we recover a well known consequence of Nakayama's lemma. \\
\indt For future reference, we record this conclusion in a different, more suggestive, form.
\begin{cor} \label{nakcordim0}
Suppose $(R, \mf{m}_R)$ is a Noetherian local ring, and $I \subset R$ is an $\mf{m}_R$-primary ideal.  Let $T \in \mathbb{N}$ be such that 
\begin{align*}
	\mf{m}_R^T \subset \mf{m}_R I \, .
\end{align*}
Then, for any $(f_1, \dots, f_c) = I ,$ and any $R$-module $M$, there is an equality
\begin{align*}
	\ol{M} = \olper{M}
\end{align*}
for all $\epsilon_1, \dots, \epsilon_c \in \mf{m}_R^T.$ 
\end{cor}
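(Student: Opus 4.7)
The plan is to reduce the statement directly to Lemma \ref{naklemma} applied with $J = 0$. By hypothesis every $\epsilon_i$ lies in $\mf{m}_R^T \subset \mf{m}_R I = \mf{m}_R(I + 0)$, so the previous lemma immediately yields the ideal equality
\begin{align*}
	I = I + 0 = \unideal{f+\epsilon} + 0 = \unideal{f+\epsilon}.
\end{align*}
This is the whole substantive step; no perturbation theory beyond Nakayama is needed because $I$ is $\mf{m}_R$-primary, which is exactly what makes $\mf{m}_R I$ (rather than some larger $\mf{m}_R$-adic neighborhood) absorb the error terms $\epsilon_i$.

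Once the ideals themselves coincide, the module statement is automatic: for any $R$-module $M$ we have $IM = \unideal{f+\epsilon}M$ as submodules of $M$, and therefore $\ol{M} = M/IM = M/\unideal{f+\epsilon}M = \olper{M}$. There is no real obstacle here — the corollary is essentially a repackaging of Lemma \ref{naklemma} in the $\mf{m}_R$-primary case, recorded in this form to emphasize that in dimension zero the module $\ol{M}$ is literally invariant (not merely isomorphic) under sufficiently small perturbations, which is why the main theorems in the introduction restrict to $\dim \ol{R} \ge 1$.
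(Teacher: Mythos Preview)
Your argument is correct and is exactly the approach the paper takes: the text preceding the corollary explicitly says ``Taking $I$ to be an $\mf{m}_R$-primary ideal and setting $J = 0$ in lemma \ref{naklemma}, we recover a well known consequence of Nakayama's lemma,'' and then records the corollary without further proof. Your observation that the ideal equality $I = \unideal{f+\epsilon}$ immediately gives the module equality is the intended (and only) remaining step.
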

\indt Perturbing ideals that are not $\mf{m}_R$-primary can be considerably more complicated (see the examples in section \ref{examples}).  To deal with this we need some additional structure to work with.  \\
\indt Suppose that $(R, \mf{m}_R)$ and $(A, \mf{m}_A)$ are Noetherian local rings, that $I \subset R$ is an ideal, and there is a commuting diagram of local rings of the following form
$$D :\begin{tikzcd}[row sep=large, column sep=large]    
			& R \arrow[d, twoheadrightarrow] \\
		 A \arrow[r, hookrightarrow, "finite", swap] \arrow[ru, hookrightarrow]	 & R/I
 \end{tikzcd}$$
 where the extension $A \xhookrightarrow{} \ol{R}$ is module finite.  Throughout the paper, when we speak of rings $R, A$ and $R/I$ being 'related as in diagram $D$,' we are referring to this diagram.  \\
\indt This configuration serves two functions.  We show in lemma \ref{mprimaryepsilons} that the setup of diagram $D$ gives us a way to parameterize the $\mf{m}_R$-adic perturbations of $I$ in $R$ using $\mf{m}_A$-primary ideals from $A$.  The diagram's other function is that it provides a useful way to re-frame the problem --- our goal is to relate the $A$-module structures of $\ol{R}$ and it's perturbations, $\olper{R}.$ \\
\indt This diagram appears naturally in many contexts, and is already present in the arguments of \cite{polstra_smirnov_2018}.  When $R$ is equal characteristic, the Cohen structure theorem produces, for every choice of coefficient field and system of parameters on $R/I$, a diagram of this form, with $A$ a power series ring (see remark \ref{mixedchar} for the mixed characteristic case).   
\begin{const} \label{sopDiagram} 
 Suppose that $I$ is an ideal in an equal characteristic complete Noetherian local ring $(R, \mf{m}_R, \kappa).$  \\
 \indt Fix a coefficient field for $R$, $\kappa \xhookrightarrow{} R,$ and recall that the composition $\kappa \xhookrightarrow{} R \twoheadrightarrow R/I$ is a coefficient field for $\ol{R} : = R/I.$  Given any full system of parameters on the quotient, $(x_1, \dots, x_a) \subset \mf{m}_{\ol{R}} \subset \ol{R},$ there is a module finite extension
	\begin{align*}
		A := \kappa[[x_1, \dots, x_a]] \xhookrightarrow{} \ol{R} \, .
	\end{align*}   
	If we choose preimages of the $x_1, \dots, x_a \in \ol{R}$ under the quotient map $R \twoheadrightarrow \ol{R},$ they will be part of a system of parameters up in $R,$ and there is an injection $A \xhookrightarrow{} R,$  which is a lift of the module finite extension $A \xhookrightarrow{} \ol{R}.$ \\
 \indt In this way, the choice of a system of parameters on $\ol{R}$ produces a commutative diagram of local rings of the form mentioned above:
	$$D :\begin{tikzcd}[row sep=large, column sep=large]    
			& R \arrow[d, twoheadrightarrow] \\
		 A \arrow[r, hookrightarrow, "finite", swap] \arrow[ru, hookrightarrow]	 & R/I
 \end{tikzcd}$$
\end{const}
 	\indt Diagram $D$ also appears in equimultiplicity theory (see \cite{I2016equimult}, \cite{I2019semicont} and \cite{lipman1982equimultiplicity}).  In that context, one studies the behavior of invariants parameterized by the spectrum of $A$.  Coincidentally, the $\mf{m}_A$-primary ideals in $A$ serve a similar function for us.\\
\begin{lemma} \label{mprimaryepsilons}
 	Suppose $(R, \mf{m}_R)$ and $(A, \mf{m}_A)$ are Noetherian, local rings, that $I \subset R$ is an ideal minimally generated by $f_1, \dots, f_c \in R$, and that $A$, $R$ and $\ol{R}=R/I$ are related as in the diagram $D$.
  Let $J \subset A$ be an $\mf{m}_A$-primary ideal.  Then
\begin{itemize}
  \item[(i)] $I + JR \subset R$ is $\mf{m}_R$-primary ;
  \item[(ii)] Given any $\epsilon_1, \dots, \epsilon_c \in \mf{m}_R \left(I + JR \right),$ there are minimal generators $(f'_1, \dots, f'_c) = I,$ and $\epsilon'_1, \dots, \epsilon'_c \in JR$ such that
\begin{align*}
	(f_1 + \epsilon_1, \dots, f_c+\epsilon_c) = (f'_1+\epsilon'_1, \dots, f'_c + \epsilon_c) \, .	
\end{align*}
\end{itemize} 	
\end{lemma}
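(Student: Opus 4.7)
I would prove the two parts separately.

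Part (i) is a direct consequence of the module-finiteness of $A \hookrightarrow \ol{R}$. Reducing the extension modulo $JR$ gives $R/(I + JR) \cong \ol{R} \otimes_A A/J$, which is a finite module over the Artinian ring $A/J$ (Artinian because $J$ is $\mf{m}_A$-primary). Hence $R/(I + JR)$ has finite length, so as a local ring it is Artinian; equivalently, $I + JR$ is $\mf{m}_R$-primary.

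For part (ii), the strategy is to separate the ``$I$-piece'' of each $\epsilon_i$ from its ``$JR$-piece'' and absorb the former into a perturbation of the generators, leaving the latter as the new $\epsilon'_i$. Using the decomposition
\begin{align*}
\mf{m}_R(I + JR) \;=\; \mf{m}_R I + \mf{m}_R JR,
\end{align*}
I would write each $\epsilon_i = a_i + b_i$ with $a_i \in \mf{m}_R I$ and $b_i \in \mf{m}_R JR \subset JR$, and expand $a_i = \sum_{j=1}^c r_{ij} f_j$ with $r_{ij} \in \mf{m}_R$. Setting $f'_i := f_i + a_i$, the transition matrix $(\delta_{ij} + r_{ij})_{i,j}$ reduces to the identity modulo $\mf{m}_R$, so its determinant is a unit in $R$ and $(f'_1, \ldots, f'_c) = I$. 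Moreover $f'_i - f_i \in \mf{m}_R I$, so Nakayama's lemma guarantees that $f'_1, \ldots, f'_c$ is again a minimal generating set of $I$. Taking $\epsilon'_i := b_i \in JR$ then yields $f_i + \epsilon_i = f'_i + \epsilon'_i$ for each $i$, as required.

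The closest thing to an obstacle is verifying that after absorbing the $\mf{m}_R I$-piece of each $\epsilon_i$ into the generators we still have a \emph{minimal} generating set of $I$; this is resolved transparently by the congruence $f'_i \equiv f_i \pmod{\mf{m}_R I}$ together with Nakayama applied to $I/\mf{m}_R I$. Everything else is routine bookkeeping, and the argument uses only the ring-theoretic data packaged in diagram $D$ together with the finite-length of $A/J$.
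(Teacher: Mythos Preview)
Your proposal is correct and follows essentially the same approach as the paper: for (i) both arguments reduce to the finite length of $\ol{R}/J\ol{R}$, and for (ii) both split each $\epsilon_i$ into an $\mf{m}_R I$-piece absorbed into new minimal generators $f'_i$ via Nakayama and a $JR$-piece that becomes $\epsilon'_i$. Your transition-matrix justification that $(f'_1,\dots,f'_c)=I$ is slightly more explicit than the paper's bare appeal to Nakayama, but the content is the same.
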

\begin{proof}
For (i), simply note that $A \xhookrightarrow{} \ol{R}$ is module finite, and 
\begin{align*}
	\dfrac{R}{I + JR} \cong  \dfrac{\ol{R}}{J\ol{R}}
\end{align*}
which has finite length. \\
\indt To establish (ii), let $(f_1, \dots, f_c) = I$ be minimal generators, and suppose $\epsilon_1, \dots, \epsilon_c \in \mf{m}_R\left(I + JR \right).$ \\
Observe that
\begin{align*}
\epsilon_i \in \mf{m}_R \left(I + JR\right) \subset \mf{m}_R I + JR
\end{align*}
and thus we may write
\begin{align*}
 \epsilon_i = h_i + \epsilon'_i
\end{align*}
with $h_i \in \mf{m}_R I$ and $\epsilon'_i \in JR$ for each $i.$  \\
Then the elements, $f'_i = f_i + h_i,$ are minimal generators for $I = \unideal{f}$ by Nakayama's lemma, and the desired equality 
\begin{align*}
	\unideal{f+\epsilon} = \unideal{f'+\epsilon'}
\end{align*} 
holds by construction.
\end{proof}
\begin{lemma} \label{mgenslemma}
	Suppose $(R, \mf{m}_R)$ and $(A, \mf{m}_A)$ are complete Noetherian, local rings, that $I \subset R$ is an ideal, and $D$ is a commuting diagram of local rings and local ring maps as above. \\
Let $T = T(D)$ be any element of $\mathbb{N}$ such that $\mf{m}_R^T \subset \mf{m}_R \left(I + \mf{m}_A R \right).$  Then, for any $(f_1, \dots, f_c) = I$ and any $ \epsilon_1, \dots, \epsilon_c \in \mf{m}_R^T,$ \\
\begin{itemize}
	\item[(i)] the composition $A \to R \to \olper{R}$ is module finite; \\
    \item[(ii)] if $M$ is a finite $R$-module, any collection of elements, $m_1, \dots, m_{\mtiny{\mu_A(\ol{M})}} \in M$ whose images are minimal $A$-module generators in $\ol{M}$, is mapped to a minimal generating set for $\olper{M}$ as an $A$-module. 
\end{itemize}
\end{lemma}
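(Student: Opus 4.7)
The overall strategy is to exploit the observation that modulo $\mf{m}_A R$, the perturbation $\unideal{f+\epsilon}$ is indistinguishable from $I$. Since $\epsilon_i \in \mf{m}_R^T \subset \mf{m}_R(I + \mf{m}_A R)$, Lemma \ref{naklemma} applied with $J = \mf{m}_A R$ yields the key identity
\begin{align*}
\unideal{f+\epsilon} + \mf{m}_A R = I + \mf{m}_A R.
\end{align*}
This identity lets us transfer generation data from $\ol{R}$ and $\ol{M}$ to $\olper{R}$ and $\olper{M}$ modulo $\mf{m}_A$, after which a completeness-based Nakayama argument finishes the proof.

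For (i), the identity gives
\begin{align*}
\olper{R}/\mf{m}_A \olper{R} \;=\; R/(I+\mf{m}_A R) \;=\; \ol{R}/\mf{m}_A\ol{R},
\end{align*}
which is a finite dimensional $\kappa_A$-vector space since $A \hookrightarrow \ol{R}$ is module finite. I would lift a $\kappa_A$-basis to elements $r_1, \dots, r_n \in R$ and set $N := Ar_1 + \cdots + Ar_n$, regarded as an $A$-submodule of $\olper{R}$. By Lemma \ref{mprimaryepsilons}(i), $I + \mf{m}_A R$ is $\mf{m}_R$-primary, so $\mf{m}_A\olper{R}$ is primary to the maximal ideal of the complete local ring $\olper{R}$; thus $\olper{R}$ is $\mf{m}_A$-adically separated (via Krull's intersection theorem) and $\mf{m}_A$-adically complete, while $N$ is also $\mf{m}_A$-adically complete as a finitely generated module over the complete ring $A$. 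From $\olper{R} = N + \mf{m}_A\olper{R}$ one iterates to $\olper{R} = N + \mf{m}_A^k\olper{R}$ for every $k$, and a standard telescoping construction then expresses any $x \in \olper{R}$ as the limit of a Cauchy sequence in $N$ with error term in $\bigcap_k \mf{m}_A^k\olper{R} = 0$, forcing $x \in N$. Hence $\olper{R} = N$ is module finite over $A$.

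For (ii), a preliminary step promotes the key identity to $M$. Writing each $\epsilon_i = h_i + \epsilon_i'$ with $h_i \in \mf{m}_R I$ and $\epsilon_i' \in \mf{m}_A R$, one finds $IM \subset L + \mf{m}_R IM$, where $L := \unideal{f+\epsilon}M + \mf{m}_A M$. The finitely generated $R$-module $(IM + L)/L$ therefore equals its own $\mf{m}_R$-multiple and vanishes by Nakayama's lemma, so $IM \subset L$ and
\begin{align*}
\unideal{f+\epsilon}M + \mf{m}_A M \;=\; IM + \mf{m}_A M.
\end{align*}
Consequently $\olper{M}/\mf{m}_A\olper{M} = \ol{M}/\mf{m}_A\ol{M}$ as quotients of $M$, and the prescribed $m_1, \dots, m_{\mu_A(\ol{M})}$ descend to a $\kappa_A$-basis of this common vector space. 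Since $\olper{M}$ is a finite $\olper{R}$-module (a quotient of the finite $R$-module $M$), part (i) shows it is a finite $A$-module, and the standard Nakayama lemma then implies the $m_i$ generate $\olper{M}$ over $A$, minimally because the count matches $\dim_{\kappa_A}\olper{M}/\mf{m}_A\olper{M}$.

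The main obstacle is the topological Nakayama step in (i): one must conclude that $\olper{R}$ is finite over $A$ without assuming so circularly, which requires invoking $\mf{m}_A$-adic completeness of $\olper{R}$ in place of the ordinary Nakayama lemma for finitely generated modules. Establishing that completeness reduces to verifying that $\mf{m}_A\olper{R}$ is primary to the maximal ideal of $\olper{R}$, which is exactly the content of Lemma \ref{mprimaryepsilons}(i) applied to the key identity. Once (i) is settled, (ii) follows from the ordinary Nakayama lemma.
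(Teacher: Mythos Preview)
Your proof is correct and rests on the same two ingredients as the paper's: the identity $\unideal{f+\epsilon} + \mf{m}_A R = I + \mf{m}_A R$ from Lemma~\ref{naklemma}, and a topological Nakayama argument over the complete ring $A$. The paper organizes things more economically: it observes that (i) follows from (ii), then proves (ii) directly by citing Matsumura's Theorem~8.4 (the complete Nakayama lemma), which only needs that $\olper{M}$ is $\mf{m}_A$-adically separated --- no appeal to completeness of $\olper{R}$ or to (i) is required. Your route, proving (i) first by an explicit telescoping argument and then deducing (ii) via ordinary Nakayama, is a valid unpacking of that same theorem. One redundancy worth noting: your ``preliminary step'' in (ii) re-deriving $\unideal{f+\epsilon}M + \mf{m}_A M = IM + \mf{m}_A M$ is unnecessary, since the ideal identity $\unideal{f+\epsilon} + \mf{m}_A R = I + \mf{m}_A R$ already gives this immediately upon multiplying by $M$.
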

\begin{proof}  First note that (i) follows from (ii), so it suffices to prove (ii). \\
\indt According to lemma \ref{naklemma}, given any $\epsilon_i \in \mf{m}_R^T \subset \mf{m}_R \left(I + \mf{m}_A R \right)$ and any minimal generators $\unideal{f} = I,$ there is an equality $$I + \mf{m}_AR = \unideal{f+\epsilon} + \mf{m}_A R.$$ Thus, for any f.g. $R$-module $M$ we have an isomorphism of $A$-modules $$\ol{M} \otimes_A A/\mf{m}_A \cong \dfrac{M}{(I+\mf{m}_AR)M} = \dfrac{M}{(\unideal{f+\epsilon}+\mf{m}_A R)M} \cong \olper{M} \otimes_A A/\mf{m}_A.$$ \\
\indt Further, it is easily seen that each $\olper{M}$ is separated in the $\mf{m}_A$-adic topology,
$$\bigcap_{k\ge 1} \left(\mf{m}_A ^k \olper{M} \right) \subset \bigcap_{k\ge 1} \left(\mf{m}_R^k\olper{M}\right) = 0.$$ 
 \indt Applying theorem 8.4 of \cite{matsumura1989commutative}, we see that if $m_1, \dots, m_{\mtiny{\mu_A(\ol{M})}} \in M$ are any elements mapping to a basis in each of the modules $$\olper{M} \otimes_A A/\mf{m}_A \cong \ol{M} \otimes_A A/\mf{m}_A$$ their images will be generators in every $\olper{M}.$  They are minimal generators by Nakayama's lemma.\\
\end{proof}
\begin{remark} \label{injmap} The techniques developed in section \ref{TechTools} will allow us to show, if $I$ satisfies some additional assumptions, these induced local maps, $A \to \olper{R},$ are actually injections for sufficiently large $T.$ 
\end{remark} \\
\vspace{.1in}
\indt Lemma \ref{mgenslemma} says that the same elements in a finite $R$-module, $m_1, \dots, m_{\mtiny{\mu_A(\ol{M})}} \in M,$ are simultaneous minimal $A$-module generators for all sufficiently close perturbations of $\ol{M}$.  In other words, as $A$-modules, these close perturbations, $\olper{M},$ are all quotients of the same {\it finite $A$-submodule of $M$,} $$N := Am_1+\dots + Am_{\mtiny{\mu_A(\ol{M})}} \subset M.$$ \\
\indt Motivated by this observation, we will investigate how the images of a given finite $A$-submodule $N \subset M$ in these quotients, $\olper{M},$ are related to one another.  Of particular interest for the applications in section \ref{AppsMult} is the case when $N \subset M$ maps to a free $A$-submodule in $\ol{M}$ (see theorem \ref{freeper}).   \\
\indt The following result and its corollaries are used later, and serve to illustrate these ideas.\\
\begin{lemma} \label{anntors} Assume the setup of lemma \ref{mgenslemma}, and that $M$ is a finite $R$-module.  Let $N \subset M$ be a finite $A$-submodule of $M,$ and write $\ul{N} \subset \ol{M}$ for the image of $N$ under the map $M \twoheadrightarrow \ol{M}.$  \\
\indt Suppose that $$\alpha \in \ann{A}{\ol{M}/\ul{N}}, \,\,\, \text{ i.e. } \,\,\, \alpha \ol{M} \subset \ul{N}.$$ \\
Then, for any integer, $H \ge 1,$ any $\epsilon_1, \dots, \epsilon_c \in \mf{m}_R \left(I + \mf{m}_A^H R\right),$ and any $(f_1, \dots, f_c) = \unideal{f} = I,$
\begin{align*}
	\alpha^{\mtiny{\mu_A(\ol{M})}} \in \ann{A}{\olper{M}/\ulper{N}} + \mf{m}_A^H,
\end{align*}
where $\ulper{N} \subset \olper{M}$ is the image of $N$ in $\olper{M}$.
\end{lemma}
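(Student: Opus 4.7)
The plan is a Cayley--Hamilton style ``determinant trick modulo $\mf{m}_A^H$'': first transfer the annihilator relation $\alpha \ol{M} \subset \ul{N}$ over to the perturbed quotient $\olper{M}$ with a controlled error, and then promote the resulting containment $\alpha \olper{M} \subset \ulper{N} + \mf{m}_A^H \olper{M}$ into an annihilator statement for $\alpha^\mu$ via a standard determinant argument on the finite $A$-module $\olper{M}/\ulper{N}$.

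First I would normalize the perturbation. Applying lemma \ref{mprimaryepsilons}(ii) with the $\mf{m}_A$-primary ideal $J = \mf{m}_A^H$, we can rewrite $\unideal{f+\epsilon} = \unideal{f'+\epsilon'}$, where $f'_i = f_i + h_i$ (with $h_i \in \mf{m}_R I$) is again a minimal generating set of $I$ and $\epsilon'_i \in \mf{m}_A^H R$. The desired conclusion depends only on the ideal $\unideal{f+\epsilon}$, so we may assume from the outset that each $\epsilon_k$ lies in $\mf{m}_A^H R$.

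Fix lifts $m_1, \dots, m_\mu \in M$ of a minimal $A$-generating set of $\ol{M}$, where $\mu := \mu_A(\ol{M})$; by lemma \ref{mgenslemma}(ii), the classes $[m_i] \in \olper{M}$ generate $\olper{M}$ as an $A$-module as well (the hypothesis of that lemma is met since $\mf{m}_R(I + \mf{m}_A^H R) \subset \mf{m}_R(I + \mf{m}_A R)$). The relation $\alpha \ol{M} \subset \ul{N}$ yields, for each $i$, an equation
\begin{align*}
\alpha m_i = n_i + \sum_{k=1}^{c} f_k y_{ik}
\end{align*}
with $n_i \in N$ and $y_{ik} \in M$. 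Passing to $\olper{M}$ and using $f_k \equiv -\epsilon_k$ modulo $\unideal{f+\epsilon}$, this becomes $\alpha [m_i] = [n_i] - \sum_k \epsilon_k [y_{ik}]$ in $\olper{M}$. Because $\epsilon_k \in \mf{m}_A^H R$, the error term lies in $\mf{m}_A^H \olper{M}$, so $\alpha [m_i] \in \ulper{N} + \mf{m}_A^H \olper{M}$. $A$-linearity and the $A$-generation of $\olper{M}$ by the $[m_i]$ then upgrade this to $\alpha \olper{M} \subset \ulper{N} + \mf{m}_A^H \olper{M}$.

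Finally, set $Y := \olper{M}/\ulper{N}$, a finite $A$-module generated by at most $\mu$ elements, on which the above reads $\alpha Y \subset \mf{m}_A^H Y$. Picking $A$-generators $y_1, \dots, y_\mu$ of $Y$ and writing $\alpha y_i = \sum_j b_{ij} y_j$ with $b_{ij} \in \mf{m}_A^H$, the usual determinant trick gives $\det(\alpha I - B) \in \ann{A}{Y}$. Expanding, $\alpha^\mu + c_{\mu-1}\alpha^{\mu-1} + \cdots + c_0 \in \ann{A}{Y}$, where each $c_j$ (for $j < \mu$) is a signed sum of $(\mu - j)$-fold products of entries of $B$ and hence lies in $\mf{m}_A^{(\mu - j)H} \subset \mf{m}_A^H$. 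Rearranging gives $\alpha^\mu \in \ann{A}{Y} + \mf{m}_A^H$, as claimed. The only real subtlety is the initial reduction: without repackaging the $\mf{m}_R \cdot I$ portion of each $\epsilon_k$ into a change of generators, the cross-terms $\epsilon_k [y_{ik}]$ would not automatically land in $\mf{m}_A^H \olper{M}$, and the error control needed for the determinant expansion at the end would evaporate.
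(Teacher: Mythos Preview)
Your proof is correct and follows essentially the same route as the paper: transfer the containment $\alpha \ol{M} \subset \ul{N}$ to the perturbed quotient with an $\mf{m}_A^H$-error, obtaining $\alpha\bigl(\olper{M}/\ulper{N}\bigr) \subset \mf{m}_A^H\bigl(\olper{M}/\ulper{N}\bigr)$, and then apply the determinant trick on this $\mu$-generated $A$-module. The only cosmetic difference is that the paper skips your normalization via lemma~\ref{mprimaryepsilons}(ii) and the explicit element computation: instead it invokes lemma~\ref{naklemma} directly to get the ideal equality $I + \mf{m}_A^H R = \unideal{f+\epsilon} + \mf{m}_A^H R$, hence $IM \subset \unideal{f+\epsilon}M + \mf{m}_A^H M$, and reads off the same containment at the module level in one line.
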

\begin{proof}
Recall that, for any such $\epsilon$'s and generators, $(f_1, \dots, f_c) = I,$ we have, by lemma \ref{naklemma}, 
	\begin{align*}
		I + \mf{m}_A^H R = \unideal{f+\epsilon} + \mf{m}_A^H R,
	\end{align*}	
	and, in particular, $$I \subset \unideal{f+\epsilon} + \mf{m}_A^H R.$$
	Now, $\alpha\ol{M} \subset \ul{N}$ means that $$\alpha M \subset IM + N,$$ and therefore
	\begin{align*}
		\alpha \left(\dfrac{M}{\unideal{f+\epsilon}M + N}\right) \subset \dfrac{IM + N + \unideal{f+\epsilon}M}{\unideal{f+\epsilon}M + N}	\subset \dfrac{ \unideal{f+\epsilon}M + \mf{m}_A^H M + N}{\unideal{f+\epsilon}M + N} = \mf{m}_A^H \left(\dfrac{M}{\unideal{f+\epsilon}M + N}\right)	\, .\\
	\end{align*}
	In other words, multiplication by $\alpha$ satisfies, 
	\begin{align*}
		\alpha\left(\olper{M}/\ulper{N}\right) \subset \mf{m}_A ^H\left(\olper{M}/\ulper{N}\right)
	\end{align*}
Now, according to lemma \ref{mgenslemma}, each $\olper{M}$ can be generated by the images of the same $\mu_A(\ol{M})$ elements as an $A$-module, and so the same is true of the quotients, $$\olper{M}/\ulper{N}.$$
Therefore, for any fixed $\unideal{f+\epsilon},$ we may apply the 'determinant trick' of \cite{HSintegral}, to conclude that there are $h_1, h_2, \dots, h_{\mtiny{\mu_A(\ol{M})}} \in A,$ with $h_i \in \mf{m}_A^{iH},$ such that multiplication by the element
	\begin{align*}
		\alpha^{\mtiny{\mu_A(\ol{M})}} + h_1 \alpha^{\mtiny{\mu_A(\ol{M})}-1} + \dots + h_{\mtiny{\mu_A(\ol{M})}-1} \alpha + h_{\mtiny{\mu_A(\ol{M})}} \in A
	\end{align*}
	annihilates $\olper{M}/\ulper{N},$
	i.e.
	\begin{align*}
		\alpha^{\mtiny{\mu_A(\ol{M})}} + h_1 \alpha^{\mtiny{\mu_A(\ol{M})}-1} + \dots + h_{\mtiny{\mu_A(\ol{M})}-1} \alpha + h_{\mtiny{\mu_A(\ol{M})}} \in \ann{A}{\olper{M}/\ulper{N}} \, .
	\end{align*}
This shows that $\alpha^{\mtiny{\mu_A(\ol{M})}}$ differs from an element in $\ann{A}{\olper{M}/\ulper{N}}$ by something in $\mf{m}_A^H,$ and the demonstration is complete.
\end{proof}
\begin{cor} \label{nonzeroann}
	Assume the same setup and notation as lemma \ref{anntors} and that $\alpha \in \ann{A}{\ol{M}/\ul{N}}$ is nonzero. If $Q \ge 1$ is such that $\alpha^{\mtiny{\mu_A(\ol{M})}} \not\in \mf{m}_A^Q,$ then 
	\begin{align*}
		\ann{A}{\olper{M}/\ulper{N}} \neq 0
	\end{align*}
	for every $\epsilon_1, \dots, \epsilon_c \in \mf{m}_R \left(I + \mf{m}_A^Q R\right),$ and any $(f_1, \dots, f_c) = \unideal{f} = I$.
	\end{cor}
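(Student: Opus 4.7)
The plan is to invoke Lemma \ref{anntors} directly with the integer $H$ taken equal to $Q$. All hypotheses on $\alpha$, $N$, $M$, and the diagram $D$ are inherited from the statement, and the restriction $\epsilon_1, \dots, \epsilon_c \in \mf{m}_R(I + \mf{m}_A^Q R)$ is exactly what Lemma \ref{anntors} requires with $H = Q$. That lemma therefore immediately delivers
\begin{align*}
\alpha^{\mtiny{\mu_A(\ol{M})}} \in \ann{A}{\olper{M}/\ulper{N}} + \mf{m}_A^Q
\end{align*}
for every choice of minimal generators $(f_1, \dots, f_c) = I$ of the specified type.

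From this containment I would decompose $\alpha^{\mtiny{\mu_A(\ol{M})}} = \beta + \gamma$ with $\beta \in \ann{A}{\olper{M}/\ulper{N}}$ and $\gamma \in \mf{m}_A^Q$. The standing hypothesis $\alpha^{\mtiny{\mu_A(\ol{M})}} \notin \mf{m}_A^Q$ rules out $\beta = 0$, for otherwise we would have $\alpha^{\mtiny{\mu_A(\ol{M})}} = \gamma \in \mf{m}_A^Q$, a contradiction. Hence $\beta$ is a nonzero element of $\ann{A}{\olper{M}/\ulper{N}}$, which is the desired conclusion.

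There is essentially no technical obstacle in this argument: the corollary is a direct repackaging of Lemma \ref{anntors} into a form more convenient for downstream applications. All the substantive content --- the combination of Nakayama's lemma with the determinant trick that tracks how the generators $m_1, \dots, m_{\mtiny{\mu_A(\ol{M})}}$ continue to generate the perturbed quotients as $A$-modules --- is already carried out in the proof of Lemma \ref{anntors}, and the present corollary merely extracts the qualitative statement that a sufficiently ``visible'' annihilator survives perturbation as a nonzero annihilator.
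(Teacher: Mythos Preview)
Your argument is correct and matches the paper's intended reasoning: the corollary is stated without a separate proof precisely because it follows immediately from Lemma~\ref{anntors} with $H=Q$, exactly as you describe. The only minor slip is the phrase ``minimal generators'' --- both Lemma~\ref{anntors} and the corollary allow any generating set $(f_1,\dots,f_c)=I$, not necessarily minimal --- but this does not affect the validity of the deduction.
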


\indt This already tells us something interesting about the $A$-module properties of small perturbations. \\
\begin{remark} \label{obsRank} \par 
\begin{itemize}
\item[(i)] Taking $N=0$ in corollary \ref{nonzeroann} we see that if $\ol{M}$ is torsion as an $A$-module, the same will be true of the $\olper{M}$ for perturbations of sufficiently high order. \\
\item[(ii)] Suppose that $A$ is a domain and that $m_1, \dots, m_n \in M$ are elements whose images in $\ol{M} = M/IM$ generate a free $A$-submodule of maximal rank, i.e.
	\begin{align*}
		A^{\oplus n} \cong A\ol{m}_1+\dots+A\ol{m}_n \subset \ol{M},
	\end{align*}
	with $\ol{M}/\left( A\ol{m}_1+\dots+A\ol{m}_n \right)$ $A$-torsion. \\
	Consider the finite $A$-submodule of $M$ spanned by these elements, 
	\begin{align*}
		N := Am_1+\dots+Am_n \subset M.
	\end{align*}
	Notice that $N$ must also be free over $A,$ and is taken isomorphically to it's image, $\ul{N},$ under the quotient map $M \twoheadrightarrow \ol{M}$ (see the proof of theorem \ref{freeper} for further discussion of this).  \\
	\indt Now, $\ul{N} \subset \ol{M}$ is a maximal rank free $A$-submodule, so there is a non-zero $\alpha \in A$ such that $\alpha\ol{M} \subset \ul{N}.$  If we let $Q$ be any integer $\ge 1$ such that $\alpha^{\mu_{A}(\ol{M})} \not\in \mf{m}_A^Q,$ then corollary \ref{nonzeroann} says that the image of $N \cong A^{\oplus n}$ in any $\olper{M}$ with the $\epsilon_1, \dots, \epsilon_c \in \mf{m}_A^Q$ {\it will be an $A$-submodule of $\olper{M}$ of maximal rank.}   
\end{itemize}
\end{remark}
\begin{cor} \label{rankle}
	Suppose $(R, \mf{m}_R)$ and $(A, \mf{m}_A)$ are complete Noetherian, local rings, that $A$ is a domain, $I \subset R$ is an ideal, $M$ is a finite $R$-module, and $A$, $R$, and $\ol{R}$ are related via diagram $D.$
 Then there is a $T = T(D, M)$ such that for all $(f_1, \dots, f_c) = I$ and all $\epsilon_1, \dots, \epsilon_c \in \mf{m}_R^T$
 \begin{align*}
 	\rank{A}{\olper{M}} \le \rank{A}{\ol{M}},
 \end{align*}
 where the rank of a torsion module is defined to be zero.
\end{cor}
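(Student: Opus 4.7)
The plan is to reduce to Remark \ref{obsRank} by tracking a maximal-rank free $A$-submodule of $\ol{M}$ into each perturbation. Set $n := \rank{A}{\ol{M}}$, which is finite because $\ol{M}$ is a finite $A$-module via the module-finite extension $A \hookrightarrow \ol{R}$. I will treat the cases $n = 0$ and $n > 0$ separately, producing in each a $T$ depending only on $D$ and $M$; taking the maximum will give the desired $T = T(D, M)$. The case $M = 0$ is trivial, so assume $M \neq 0$, whence $\ol{M} \neq 0$ by Nakayama and $\mu_A(\ol{M}) \geq 1$.

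If $n = 0$, then $\ol{M}$ is $A$-torsion, so some nonzero $\alpha \in A$ annihilates $\ol{M}$. Since $A$ is a Noetherian domain, Krull's intersection theorem furnishes a $Q \geq 1$ with $\alpha^{\mtiny{\mu_A(\ol{M})}} \notin \mf{m}_A^Q$. By Lemma \ref{mprimaryepsilons}(i), $I + \mf{m}_A^Q R$ is $\mf{m}_R$-primary, so I may pick $T$ with $\mf{m}_R^T \subset \mf{m}_R(I + \mf{m}_A^Q R)$. Corollary \ref{nonzeroann}, applied with $N = 0$, then yields $\ann{A}{\olper{M}} \neq 0$ uniformly across all minimal $(f_1, \ldots, f_c) = I$ and all $\epsilon_1, \ldots, \epsilon_c \in \mf{m}_R^T$, so $\rank{A}{\olper{M}} = 0$.

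For $n > 0$, choose $m_1, \ldots, m_n \in M$ whose images in $\ol{M}$ span a free $A$-submodule $\ul{N} \cong A^{\oplus n}$ of maximal rank, and set $N := Am_1 + \cdots + Am_n \subset M$. The composite $A^{\oplus n} \twoheadrightarrow N \twoheadrightarrow \ul{N} \cong A^{\oplus n}$ carries the standard basis to itself and so is an isomorphism, which forces $N$ itself to be free of rank $n$. Because $\ul{N}$ has maximal rank in $\ol{M}$, there is a nonzero $\alpha \in A$ with $\alpha \ol{M} \subset \ul{N}$; I pick $Q$ and $T$ from this $\alpha$ exactly as in the previous case. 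Remark \ref{obsRank}(ii) then says that for every minimal $(f_1, \ldots, f_c) = I$ and every $\epsilon_1, \ldots, \epsilon_c \in \mf{m}_R^T$, the image $\ulper{N}$ of $N$ in $\olper{M}$ is a maximal-rank $A$-submodule of $\olper{M}$. But $\ulper{N}$ is a quotient of the free module $N \cong A^{\oplus n}$, hence $\rank{A}{\ulper{N}} \leq n$, giving $\rank{A}{\olper{M}} = \rank{A}{\ulper{N}} \leq n = \rank{A}{\ol{M}}$.

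All the genuine content is packaged into Remark \ref{obsRank}(ii) (itself a consequence of Corollary \ref{nonzeroann}); the rest is bookkeeping. The only potential obstacle is verifying that the $T$ produced from $\alpha$ is uniform in the choice of minimal generators and perturbation, but this is automatic since $Q$ is fixed before any choice is made, and Corollary \ref{nonzeroann} applies to every minimal generating set and every $\epsilon_1, \ldots, \epsilon_c \in \mf{m}_R(I + \mf{m}_A^Q R)$ at once.
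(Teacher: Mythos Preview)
Your proposal is correct and follows essentially the same route as the paper: both arguments reduce to Remark~\ref{obsRank} (and its consequence Corollary~\ref{nonzeroann}) by tracking a maximal-rank free $A$-submodule into the perturbations, then convert the $\mf{m}_A^Q$-condition there into an $\mf{m}_R^T$-condition via the $\mf{m}_R$-primariness of $I + \mf{m}_A^Q R$. The paper's own proof is a single sentence pointing back to that remark, whereas you have simply unpacked the two cases and the bookkeeping explicitly.
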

	\begin{proof}
		Notice that, in the observation above, $I+\mf{m}_A^Q R$ is $\mf{m}_R$-primary, so there is a $T = T(D, M)$ such that $\mf{m}_R^T \subset \mf{m}_R(I + \mf{m}_A^Q R).$
	\end{proof}
\indt The inequality in corollary \ref{rankle} is not new. For example, it can be derived from lemma 3 of \cite{ST}, which also appears as lemma 3.2 of \cite{Mafilter}.
\subsection{Technical Conditions} \label{techcon}
\indt  With some additional assumptions, we will show, in section 3, that the inequality in corollary \ref{rankle} is an equality when the perturbations are small enough.  We need our data to satisfy some technical conditions: \\
	\begin{defn} \label{condition} Suppose that  $(R, \mf{m}_R)$ and $(A, \mf{m}_A)$ are Noetherian local rings, $I \subset R$ is an ideal, and $D$ is our standard commuting diagram of local ring maps,
 $$ D: \begin{tikzcd}[row sep=large, column sep=large]    
			& R \arrow[d, twoheadrightarrow] \\
		 A \arrow[r, hookrightarrow, "finite", swap] \arrow[ru, hookrightarrow]	 & R/I
 \end{tikzcd}$$
 Given the information of a finitely generated $R$-module, $M,$ together with a non-zero finitely generated $\mathit{A}$-{\it submodule,} $N \subset M,$ we say that a non-zero element $d \in A$ satisfies condition ($*$) for $(D, N \subset M),$ if 
 	\begin{quote}
 		$\mathbf{(*)}$ for  $(D, N \subset M):$ \, \, \, \, \, $d$ is a non-zero divisor on $N$ and $d \hgy{1}{I; M} = 0.$
 	\end{quote}
 	\end{defn}
	\indt When condition $(*)$ is satisfied we show, in section \ref{techlemmas}, that a weakened form of the Artin-Rees lemma applies uniformly across all sufficiently close perturbations (see lemma \ref{dLem}).  This is a key ingredient in the proof of lemma \ref{mainlemma}, which is a general technical tool for relating the $A$-module properties of the $\olper{M}.$\\
\indt	Note that when $I \subset R$ is generated by an $M$-regular sequence the condition is automatic: $\hgy{1}{I; M} = 0$ and therefore $1 \in A$  satisfies condition $(*)$ for $(D, N \subset M).$  In general, $(*)$ puts strong constraints on the ideal $I.$  For example, in certain circumstances it forces the height of $I$ to be as large as possible:
\begin{prop} \label{star}
 Suppose in the diagram above, $D,$ $d \hgy{1}{I; M} = 0$ for some non-zero $d \in A,$ where $M$ is a finite $R$-module. If there is a minimal prime $\mf{p} \subset R$ of $I$ with $\height{\mf{p}} = \height{I}$ such that $d \not\in \mf{p}$ and $M_\mf{p} \neq 0,$ then $\height{I} = \mu(I).$
\end{prop}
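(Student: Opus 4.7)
The plan is to localize at $\mf{p}$ and exhibit $f_1, \ldots, f_c$ as an $M_{\mf{p}}$-regular sequence, from which a short height calculation finishes things. Fix minimal generators $(f_1, \ldots, f_c) = I$, so by the paper's convention $\hgy{1}{I; M} = \mathbf{H}_1(f_1, \ldots, f_c; M)$. The Koszul complex on the \emph{fixed} sequence $f_1, \ldots, f_c$ commutes with localization, so
\[ \mathbf{H}_1(f_1, \ldots, f_c; M_{\mf{p}}) \cong \mathbf{H}_1(f_1, \ldots, f_c; M)_{\mf{p}} = \hgy{1}{I; M}_{\mf{p}}. \]
Since $d$ annihilates $\hgy{1}{I; M}$ and its image in $R_{\mf{p}}$ is a unit (as $d \notin \mf{p}$), this localized module vanishes: $\mathbf{H}_1(f_1, \ldots, f_c; M_{\mf{p}}) = 0$.

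Next, $(f_1, \ldots, f_c) R_{\mf{p}} \subset \mf{p}R_{\mf{p}}$ lies in the maximal ideal of the local ring $R_{\mf{p}}$, and $M_{\mf{p}} \neq 0$ is a finitely generated $R_{\mf{p}}$-module by hypothesis, so Nakayama forces $(f_1, \ldots, f_c) M_{\mf{p}} \neq M_{\mf{p}}$. By the standard Koszul criterion for regular sequences in a local ring (for instance Bruns--Herzog, Corollary 1.6.19, or Matsumura, Theorem 16.5), the vanishing of $\mathbf{H}_1$ on a nonzero finite module forces $f_1, \ldots, f_c$ to be an $M_{\mf{p}}$-regular sequence. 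Hence $\depth{R_{\mf{p}}}{M_{\mf{p}}} \geq c$, and in particular
\[ \height{\mf{p}} = \dim R_{\mf{p}} \geq \depth{R_{\mf{p}}}{M_{\mf{p}}} \geq c. \]
Combining with Krull's height theorem ($\height{I} \leq \mu(I) = c$) and the hypothesis $\height{\mf{p}} = \height{I}$ yields $\height{I} = c = \mu(I)$, as desired.

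The only subtle point is the bookkeeping around Koszul homology under localization: the paper's symbol $\hgy{1}{I; M}$ presupposes a minimal generating set, and the generators $f_1, \ldots, f_c$ may fail to be minimal in $R_{\mf{p}}$, so one must distinguish between the well-defined localized module $\hgy{1}{I; M}_{\mf{p}}$ and the potentially different object $\hgy{1}{IR_{\mf{p}}; M_{\mf{p}}}$. Working throughout with the explicit Koszul complex $\mathbf{K}(f_1, \ldots, f_c; -)$ on the fixed global sequence sidesteps the issue entirely. Beyond this bit of care, the argument is a routine application of the Koszul homology / regularity dictionary, and I do not anticipate any real difficulty.
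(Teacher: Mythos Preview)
Your proof is correct and follows essentially the same route as the paper's: localize at $\mf{p}$, use $d\notin\mf{p}$ to kill $\hgy{1}{\unideal{f};M}_{\mf{p}}$, invoke Bruns--Herzog Corollary~1.6.19 to conclude that $f_1,\dots,f_c$ is $M_{\mf{p}}$-regular, and then squeeze $c$ between $\height{I}$ (via Krull) and $\dim R_{\mf{p}}$ (via depth). Your explicit remark that Nakayama guarantees $IM_{\mf{p}}\neq M_{\mf{p}}$ and your closing paragraph on the localization subtlety are welcome clarifications, but the argument itself is the paper's.
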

\begin{proof}
Choose minimal generators $\unideal{f} = (f_1, \dots, f_c) = I.$ Suppose that $\mf{p} \in \min(I)$ is a minimal prime with $\height{\mf{p}} = \height{I},$ and $d \not\in \mf{p}.$  Our assumption implies that $$\hgy{1}{\unideal{f}; M}_\mf{p} = \hgy{1}{\unideal{f/1}R_{\mf{p}}; M_{\mf{p}}} = 0.$$ \\
\indt $R_{\mf{p}}$ is local and $M_\mf{p} \neq 0$, so, by corollary 1.6.19 of \cite{bh1998}, the sequence $\frac{f_1}{1}, \dots, \frac{f_c}{1}$ is $M_{\mf{p}}$-regular. In particular, $\depth{\mf{p}R_{\mf{p}}}{M_{\mf{p}}} \ge c.$ \\
\indt Combining this with the inequalities, $ \dim R_{\mf{p}} = \height{I} \le c$ from the Krull height theorem, and $\depth{\mf{p}R_{\mf{p}}}{M_{\mf{p}}} \le \dim R_{\mf{p}},$ we get
	\begin{align*}
		\dim R_{\mf{p}} \le c \le \depth{\mf{p}R_{\mf{p}}}{M_{\mf{p}}} \le \dim R_{\mf{p}}
	\end{align*}
and so we have $\height{I} = \dim R_{\mf{p}} = c,$ as claimed.
\end{proof}
\begin{remark} \label{equidim}
	A particular case of this is worth highlighting.  Suppose that, in proposition \ref{star}, $R/I$ is equidimensional, $A$ is a domain, and $d\hgy{1}{I; M} = 0$ for some nonzero $d \in A$.  Since $A$ is a domain and the extension $A \xhookrightarrow{} \ol{R}$ is finite, $\mf{p} \cap A = 0$ for all $\mf{p} \in \minh{R/I}.$  Therefore $d \not\in \mf{p}$ for any minimal prime $\mf{p},$ of $\ol{R},$ and if $M_\mf{p} \neq 0$ for any minimal prime $\mf{p}$ of $\ol{R},$ the conclusion of the proposition, $\height{I} = \mu(I),$ holds.  
\end{remark} \\
	\indt The formulation of definition \ref{condition}, as well as the main results in section \ref{techlemmas}, are intended to be very general.  In practice, to take advantage of this theory one needs their data to fit into the very special setting of diagram $D$.  In this paper we achieve this through construction \ref{sopDiagram} (i.e. the Cohen structure theorem), so we are especially concerned with the case where $A$ is a regular local ring.  Because of the particular applications we have in mind, we are also most interested in the case when $N \subset M$ is a free $A$-submodule of $M$.  In these circumstances, any nonzero $d \in A$ such that $d \hgy{1}{I; M} = 0$ will satisfy $(*)$.  The next few statements are formulated with this case in mind. \\
\begin{lemma} \label{dimH1}	
	Suppose $(R, \mf{m}_R),$ $(A, \mf{m}_A),$ $\ol{R} = R/I,$ and the diagram $D$ are as above, and that $M$ is a finite $R$-module. \\	
Assume  $$\dim \dfrac{R}{\ann{R}{\hgy{1}{I; M}}} = \dim_{\ol{R}} \hgy{1}{I; M} < \dim \ol{R} \, .$$ 
Then, there is an element, $d \in A,$ such that $$\dim A/dA = \dim A - 1$$ and $d \hgy{1}{I; M} = 0.$	
\end{lemma}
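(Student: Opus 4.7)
The plan is to translate the hypothesis, which is a dimension bound on the $\ol{R}$-annihilator of $\hgy{1}{I; M}$, into a bound on an ideal inside $A$, and then extract $d$ by prime avoidance. This is the natural move in diagram $D$: the finiteness of $A \hookrightarrow \ol{R}$ makes dimension transfer through the finite extension harmless.

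First I would set $\ol{J} := \ann{\ol{R}}{\hgy{1}{I; M}}$ and let $J \subset A$ be the preimage of $\ol{J}$ under $A \to \ol{R}$. Since $I$ kills $\hgy{1}{I; M}$, the $A$-module structure (via $A \hookrightarrow R$) factors through $\ol{R}$, and so $J = \ann{A}{\hgy{1}{I; M}}$. By construction the induced map $A/J \hookrightarrow \ol{R}/\ol{J}$ is injective, and the target is finitely generated over $A/J$ because $\ol{R}$ is finitely generated over $A$. This is a module-finite injective local map, so its source and target have equal Krull dimension:
\begin{align*}
	\dim A/J = \dim \ol{R}/\ol{J} = \dim_{\ol{R}} \hgy{1}{I; M} < \dim \ol{R} = \dim A.
\end{align*}

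Next I would apply prime avoidance to the finite collection $\mf{p}_1, \dots, \mf{p}_k$ of primes in $\minh{A}$. For each $i$, we have $\mf{p}_i \subsetneq \mf{m}_A$ since $\dim A \geq 1$, and $J \not\subset \mf{p}_i$ because otherwise $\dim A/J \geq \dim A/\mf{p}_i = \dim A$, contradicting the bound above. Hence $J \cap \mf{m}_A \not\subset \mf{p}_i$ for every $i$, and prime avoidance produces a $d \in J \cap \mf{m}_A$ lying outside $\mf{p}_1 \cup \cdots \cup \mf{p}_k$. Membership in $J$ already gives $d\hgy{1}{I;M} = 0$.

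It remains to check that $\dim A/dA = \dim A - 1$. The inequality $\geq$ is Krull's height theorem. For $\leq$: any prime $\mf{q}$ containing $dA$ contains some minimal prime $\mf{p}$ of $A$, and either $\mf{p} \notin \minh{A}$ (so $\dim A/\mf{q} \leq \dim A/\mf{p} < \dim A$), or $\mf{p} = \mf{p}_i$ for some $i$ in which case $d \in \mf{q} \setminus \mf{p}_i$ forces $\mf{q} \supsetneq \mf{p}_i$ and hence $\dim A/\mf{q} \leq \dim A/\mf{p}_i - 1 = \dim A - 1$. The only step with any real content is the dimension transfer $\dim A/J = \dim \ol{R}/\ol{J}$ in the first paragraph; once that is in place, the rest is standard prime avoidance and dimension theory, and there is no real obstacle.
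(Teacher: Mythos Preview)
Your argument is correct and follows essentially the same route as the paper: your ideal $J$ is exactly the paper's $K = A \cap \ann{\ol{R}}{\hgy{1}{I; M}}$, and both proofs use module-finiteness of $A/J \hookrightarrow \ol{R}/\ol{J}$ to get $\dim A/J < \dim A$. The only difference is that the paper simply says ``$K$ contains a parameter'' at the end, whereas you spell out the prime-avoidance argument and verify $\dim A/dA = \dim A - 1$ explicitly; this is extra care rather than a different idea.
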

\begin{proof}
	Let $K$ denote the kernel of the composition 
	\begin{align*}
	A \xhookrightarrow{} \ol{R} \twoheadrightarrow \ol{R}/\ann{\ol{R}}{\hgy{1}{I; M}} \, .
	\end{align*}
	The induced extension
	\begin{align*}
		A/K \xhookrightarrow{} \ol{R}/\ann{\ol{R}}{\hgy{1}{I; M}}
	\end{align*}
	continues to be module finite, and thus 
	\begin{align*}
		\dim A/K = \dim \ol{R}/\ann{\ol{R}}{\hgy{1}{I; M}} < \dim \ol{R} = \dim A \, .
	\end{align*}
	It follows that $K = A \cap \ann{\ol{R}}{\hgy{1}{I; M}}$ contains a parameter.
\end{proof}
\begin{remark} \label{dimH1_converseFilterReg_Reg}
	\begin{itemize}
	\item[(i)]	It is not hard to show that the converse of lemma \ref{dimH1} also holds, in the sense that if there is any $A$ fitting into diagram $D$ with a parameter, $d \in A,$ that annihilates $\hgy{1}{I; M}$ then $\dim_{\ol{R}} \hgy{1}{I; M} < \dim \ol{R}.$   \\
	\item[(ii)] The condition on the dimension of $\hgy{1}{I; M}$ in lemma \ref{dimH1} should be understood as an especially convenient weakening of the condition that $I$ be generated by an $M$-regular sequence. For example:
	\begin{itemize}
	\item[(ii.a)]	If $I \subset R$ happens to have a minimal generating set that forms an $M$-filter regular sequence, then $\ann{R}{\hgy{1}{I; M}}$ is $\mf{m}_R$-primary and hence $$\dim_{\ol{R}} \hgy{1}{I; M} = 0$$ Therefore the conditions of lemma \ref{dimH1} are automatically satisfied if $\ol{R}$ has positive dimension (see 2.1 of \cite{polstra2019nilpotence}, section 1.2 of \cite{maddox2019sufficient}, or \cite{Mafilter} for details about filter-regular sequences).   
	\item[(ii.b)]  When $R$ is equicharacteristic and complete then one can apply construction \ref{sopDiagram} (see remark \ref{mixedchar} for the mixed characteristic case) to get a regular local domain $A$ fitting together with $R$ and $R/I$ into a commutative diagram of the desired form, $D.$  Take $M =R$, for simplicity, and assume $\dim_{\ol{R}} \hgy{1}{I; R} < \dim \ol{R}$.  Then, as discussed in remark \ref{equidim}, if $I \subset R$ is equidimensional lemma \ref{star} implies that $I$ must be a parameter ideal.  In this case, the conclusion of proposition \ref{dimH1altsupp} with $M = R$ implies that $R_{\mf{q}}$ is Cohen-Macualay for each $\mf{q} \in \min\left(R/I\right).$  If, in addition, $R/I$ has no embedded primes, then $I$ is actually generated by a regular sequence, by corollary 2 of \cite{eisenbud1977remarks}. \\
	\end{itemize}
	\item[(iii)] We should also note that assumptions of lemma \ref{dimH1} are trivially satisfied when $M_\mf{q} = 0$ for every $\mf{q} \in \minh{R/I}.$ 	
\end{itemize}
\end{remark} 
\vspace{.1in}
\indt  When $M$ has support at a prime in $\minh{R/I},$ lemma \ref{dimH1} has an equivalent formulation. 
\begin{prop} \label{dimH1altsupp}
Suppose $(R, \mf{m}_R)$ is a Noetherian local ring, $I \subset R$ is an ideal, and $M$ is a finite $R$-module such that $\minh{R/I} \cap \supp{R}{M} \neq 0$.  Then the following are equivalent:
\begin{itemize}
\item[(i)] $\dim R /\ann{R}{ \hgy{1}{I; M}} <  \dim R/I$;
\item[(ii)] for any minimal generators $(f_1, \dots, f_c) = I,$  $f_1/1, \dots, f_c/1$ is an $M_\mf{q}$-regular sequence for every $\mf{q} \in \minh{R/I} \cap \supp{R}{M}$;
\item[(iii)] there are minimal generators $(f_1, \dots, f_c) = I,$ such that $f_1/1, \dots, f_c/1$ is an $M_\mf{q}$-regular sequence for every $\mf{q} \in \minh{R/I} \cap \supp{R}{M}$.
\end{itemize}
\end{prop}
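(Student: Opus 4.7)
The plan is to close the circle $(i) \Rightarrow (ii) \Rightarrow (iii) \Rightarrow (i)$, where $(ii) \Rightarrow (iii)$ is immediate since (ii) is a uniform statement over all minimal generating sets while (iii) only asks for one witness. The argument rests on two facts: first, although $\hgy{1}{I; M}$ does not commute with localization in general, for fixed minimal generators $(f_1, \dots, f_c) = I$ the Koszul complex $\mathbf{K}(f_1, \dots, f_c; M)$ does localize on the nose, so that $\hgy{1}{I; M}_{\mf{q}}$ is identified with $\hgy{1}{f_1/1, \dots, f_c/1; M_{\mf{q}}}$; and second, corollary 1.6.19 of \cite{bh1998}, which for a nonzero finite module over a local ring characterizes regular sequences in the maximal ideal by the vanishing of first Koszul homology.

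For $(i) \Rightarrow (ii)$, I fix any $\mf{q} \in \minh{R/I} \cap \supp{R}{M}$ and any minimal generators $(f_1, \dots, f_c) = I$. Hypothesis (i) forces $\ann{R}{\hgy{1}{I; M}} \not\subset \mf{q}$, since otherwise $\dim R/\mf{q} \le \dim R/\ann{R}{\hgy{1}{I; M}} < \dim R/I = \dim R/\mf{q}$, a contradiction. Hence $\hgy{1}{I; M}_{\mf{q}} = \hgy{1}{f_1/1, \dots, f_c/1; M_{\mf{q}}}$ vanishes, every $f_i/1$ lies in $\mf{q}R_{\mf{q}}$, and $M_{\mf{q}} \neq 0$, so corollary 1.6.19 of \cite{bh1998} applied in $R_{\mf{q}}$ yields that $f_1/1, \dots, f_c/1$ is an $M_{\mf{q}}$-regular sequence.

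For $(iii) \Rightarrow (i)$, I select minimal generators $(f_1, \dots, f_c) = I$ witnessing (iii). For each $\mf{q} \in \minh{R/I} \cap \supp{R}{M}$ the Koszul complex on an $M_{\mf{q}}$-regular sequence has vanishing positive homology; for $\mf{q} \in \minh{R/I}$ outside $\supp{R}{M}$ we have $M_{\mf{q}} = 0$, and the vanishing is trivial. Thus $\hgy{1}{I; M}_{\mf{q}} = 0$ for every $\mf{q} \in \minh{R/I}$, i.e.\ $\ann{R}{\hgy{1}{I; M}} \not\subset \mf{q}$ for any such $\mf{q}$. Since $I \subset \ann{R}{\hgy{1}{I; M}}$, any minimal prime $\mf{p}'$ of $\ann{R}{\hgy{1}{I; M}}$ contains some minimal prime $\mf{p}$ of $I$; a short chain-of-primes argument shows that $\dim R/\mf{p}' = \dim R/I$ would force $\mf{p}' = \mf{p} \in \minh{R/I}$, contradicting the previous sentence. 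Therefore $\dim R/\ann{R}{\hgy{1}{I; M}} < \dim R/I$.

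There is no serious obstacle. The only subtlety is the localization issue flagged in the paper's notation section: because minimal generators of $I$ in $R$ may cease to be minimal in $R_{\mf{q}}$, one must be careful to interpret $\hgy{1}{I; M}_{\mf{q}}$ as the Koszul homology of a fixed generating sequence rather than of $I R_{\mf{q}}$ via minimal generators inside $R_{\mf{q}}$. Once this identification is made, the equivalence is a direct combination of corollary 1.6.19 of \cite{bh1998} with the observation about minimal primes of the annihilator described above.
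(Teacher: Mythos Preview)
Your proof is correct and follows essentially the same route as the paper's own argument: both close the cycle $(i)\Rightarrow(ii)\Rightarrow(iii)\Rightarrow(i)$ by localizing the Koszul complex at primes $\mf{q}\in\minh{R/I}$, invoking corollary 1.6.19 of \cite{bh1998} to pass between vanishing of $\hgy{1}{f_1/1,\dots,f_c/1;M_{\mf{q}}}$ and regularity of the sequence, and then using $I\subset\ann{R}{\hgy{1}{I;M}}$ together with the fact that the annihilator avoids every $\mf{q}\in\minh{R/I}$ to force the strict dimension drop. Your final chain-of-primes remark simply makes explicit what the paper leaves implicit in its last line.
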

\begin{proof}
((i) $\Longrightarrow$ (ii)) \\
\indt The assumption $\dim R /\ann{R}{ \hgy{1}{I; M}} <  \dim R/I,$ means that $$\ann{R}{\hgy{1}{I; M}} \not \subset \mf{q}$$ for any $\mf{q} \in \minh{R/I}.$  Hence, $$\left(\hgy{1}{I; M}\right)_{\mf{q}} = 0,$$ for all $\mf{q} \in \minh{R/I}.$  \\
\indt Therefore, for any minimal generators, $(f_1, \dots, f_c) = I,$ and every $\mf{q} \in \minh{R/I},$
\begin{align*}
	\hgy{1}{(f_1/1, \dots, f_c/1) ; M_\mf{q}} \cong \left(\hgy{1}{I; M}\right)_{\mf{q}} = 0 \, .
\end{align*}
Thus, if $M_\mf{q} \neq 0,$ then $f_1/1, \dots, f_c/1$ is $M_\mf{q}$-regular by corollary 1.6.19 of \cite{bh1998}. \\
\vspace{.1in}
(ii) obviously implies (iii) \\
\vspace{.1in}
((iii) $\Longrightarrow$ (i)) \\
\indt Suppose $(f_1, \dots, f_c) = I$ are minimal generators, which form an $M_\mf{q}$-regular sequence in $R_\mf{q},$ for every $\mf{q} \in \minh{R/I} \cap \supp{R}{M}.$  First note that 
\begin{align*}
	\left(\hgy{1}{I; M}\right)_{\mf{q}} \cong \hgy{1}{(f_1/1, \dots, f_c/1) ; M_\mf{q}} = 0
\end{align*}
for every $\mf{q} \in \minh{R/I} \setminus \supp{R}{M},$ since, $M_{\mf{q}} = 0$ for these primes.  And, by assumption we also have $$\hgy{1}{(f_1/1, \dots, f_c/1) ; M_\mf{q}} = 0$$ for every $\mf{q} \in \minh{R/I} \cap \supp{R}{M}$ --- again by corollary 1.6.19 of \cite{bh1998}.\\
\indt  It follows that $\ann{R}{\hgy{1}{I; M}} \not \subset \mf{q}$ for any $\mf{q} \in \minh{R/I} \cap \supp{R}{M}.$  Noting that $I \subset \ann{R}{\hgy{1}{I; M}}$ we conclude that
\begin{align*}
	\dim R /\ann{R}{ \hgy{1}{I; M}} <  \dim R/I \, .
\end{align*}
\end{proof}
\indt A quick word of caution: this condition on the dimension of $\mathbf{H}_1$ is somewhat subtle.  The common source of confusing is that $\hgy{1}{I; M}$ is computed using minimal generators for $I$ in $R$;  after localizing at a prime, $\mf{q} \in \spec{R},$ these elements may no longer be minimal generators for $IR_{\mf{q}}$ in $R_{\mf{q}},$ and in this case $\hgy{1}{I; M}_{\mf{q}} \neq \hgy{1}{IR_{\mf{q}}; M_{\mf{q}}}.$  For example, statements (ii) and (iii) of proposition \ref{dimH1altsupp} may be interpreted as saying that the minimal generators of $I$ 'generically form a regular sequence on $M$'.  This is not the same thing as $I$ being 'generically generated by an $M$-regular sequence'.  Indeed, every prime $\mf{p} \subset S$ in a regular local ring, $S$, is generically generated by a regular sequence, since $S_\mf{p}$ is regular --- however, minimal generators of $\mf{p}$ in $S$ will only form a regular sequence on $S_{\mf{p}}$ when $\mf{p}$ is generated by part of a system of parameters (and, in this case, must already be a regular sequence on $S$).  
\section{Technical Tools} \label{TechTools}
\subsection{Lemmas} \label{techlemmas}
\indt The first result of this section is needed to show the Artin-Reese number in lemma \ref{dLem} is well defined.  
\begin{lemma} \label{isoComp}
	Suppose that
	$$
	\begin{tikzcd}
		B \arrow[r, "\beta"] \arrow[d, "\alpha"]
			& C \arrow[d, "\alpha'"] \\
		B \arrow[r, "\beta'"]
			& C
	\end{tikzcd}
$$ is a commutative diagram of $R$-modules, and that $\alpha: B \to B$ and $\alpha':C \to C$ are isomorphisms. Then
		\begin{itemize}
			\item[(i)] $\knl{\beta'} = \alpha(\knl{\beta}),$ and $\knl{\beta} = \alpha^{-1}(\knl{\beta'})$;
			\item[(ii)] For any ideal $L \subset R,$ 
			\begin{align*} 
				\AR{L}{\knl{\beta} \subset B}{R} = \AR{L}{\knl{\beta'} \subset B}{R} \, .
			\end{align*}
		\end{itemize}
	\end{lemma}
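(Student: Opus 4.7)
The plan is to prove (i) first by a direct diagram chase, and then deduce (ii) by transporting the Artin-Rees relation across the isomorphism $\alpha$.

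For part (i), I would argue as follows. The commutativity of the square gives $\beta' \circ \alpha = \alpha' \circ \beta$. Given $b \in \knl{\beta}$, we compute $\beta'(\alpha(b)) = \alpha'(\beta(b)) = \alpha'(0) = 0$, so $\alpha(\knl{\beta}) \subset \knl{\beta'}$. For the reverse containment, take $b' \in \knl{\beta'}$; since $\alpha$ is surjective, write $b' = \alpha(b)$ for some $b \in B$, and then $0 = \beta'(\alpha(b)) = \alpha'(\beta(b))$ forces $\beta(b) = 0$ by the injectivity of $\alpha'$. So $\knl{\beta'} \subset \alpha(\knl{\beta})$, and the second equality follows by applying $\alpha^{-1}$ to both sides.

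For part (ii), set $K := \knl{\beta}$ so that $\knl{\beta'} = \alpha(K)$ by part (i). The key observation is that because $\alpha: B \to B$ is an $R$-module isomorphism, it commutes with the operations that define the Artin-Rees number: for any $R$-submodule $X \subset B$ and any ideal $L$, one has $\alpha(L^n X) = L^n \alpha(X)$, and for any two submodules $X, Y \subset B$ the bijectivity of $\alpha$ yields $\alpha(X \cap Y) = \alpha(X) \cap \alpha(Y)$. Applying this with $X = L^{n+t}B$ (noting $\alpha(L^{n+t}B) = L^{n+t}B$) and $Y = K$, the inclusion
\begin{align*}
    L^{n+t} B \cap K \subset L^n K
\end{align*}
transforms under $\alpha$ into
\begin{align*}
    L^{n+t} B \cap \alpha(K) = \alpha\bigl(L^{n+t} B \cap K\bigr) \subset \alpha(L^n K) = L^n \alpha(K).
\end{align*}
Thus $\AR{L}{\alpha(K) \subset B}{R} \le \AR{L}{K \subset B}{R}$, and the opposite inequality follows by the same argument applied to the isomorphism $\alpha^{-1}$ sending $\alpha(K)$ back to $K$.

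I do not anticipate any real obstacle: the content is entirely formal, resting on the fact that an isomorphism preserves submodules, intersections, and multiplication by an ideal. The only small point to be careful about is to note explicitly that $\alpha(L^{n+t}B) = L^{n+t}B$ (which holds because $\alpha$ is surjective and $R$-linear), so that the intersection on the left-hand side is genuinely computed inside $B$ and not inside some proper submodule.
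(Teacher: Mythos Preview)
Your proposal is correct and follows essentially the same route as the paper: both arguments hinge on the observation that an $R$-module isomorphism preserves intersections of submodules and commutes with multiplication by powers of $L$, and both use $\alpha(L^{n+t}B)=L^{n+t}B$ to transport the Artin--Rees condition from $\knl{\beta}$ to $\knl{\beta'}$. The only cosmetic difference is that the paper works directly with the equality $L^{n+t}B\cap K = L^n(L^tB\cap K)$ (matching its stated definition of $\AR{L}{\cdot}{R}$) and gets both directions at once, whereas you phrase things via the weaker inclusion $L^{n+t}B\cap K\subset L^nK$ and then invoke $\alpha^{-1}$ for symmetry; your argument adapts verbatim to the equality form, so there is no gap.
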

	\begin{proof}
		\begin{itemize}
			\item[(i)] Clearly 
				\begin{align*}
					x \in \knl{\beta} \Longleftrightarrow \alpha' \circ \beta (x) = 0 \Longleftrightarrow \beta' \circ \alpha(x) = 0 \Longleftrightarrow \alpha(x) \in \knl{\beta'}
				\end{align*}
			\item[(ii)] $\alpha$ is an isomorphism, so for any submodules $K, K' \subset B,$ 
				\begin{align*}
					\alpha( K \cap K' ) \subset \alpha(K) \cap \alpha(K') = \alpha \circ \alpha^{-1} \left(\alpha(K) \cap \alpha(K')\right) \subset \alpha\left(\alpha^{-1}\alpha(K) \cap \alpha^{-1}\alpha(K') \right) = \alpha(K \cap K').
				\end{align*}			
				Notice, in particular, that $$\alpha(K \cap K') = \alpha(K) \cap \alpha(K').$$  Of course, $\alpha$ is $R$-linear, so $\alpha(L^kK) = L^k \alpha(K),$ for any $k \ge 0,$ and any $R$-submodule $K \subset B.$ \\
				So, for any $n+t \ge 0,$ we get
				\begin{align*}
					\alpha\left( L^{n+t} B \cap \knl{\beta}\right) = L^{n+t}B \cap \alpha(\knl{\beta}) = L^{n+t} B \cap \knl{\beta'}.
				\end{align*}
				Therefore,
				\begin{align*}
					 L^{n+t} B \cap \knl{\beta} = L^n \left(L^tB \cap \knl{\beta}\right) \Longleftrightarrow L^{n+t}B \cap \knl{\beta'} &= \alpha \left(L^n \left(L^tB \cap \knl{\beta}\right) \right)\\
					 &= L^n \left(L^tB \cap \knl{\beta'}\right)\, .
				\end{align*}
		\end{itemize}
	\end{proof} 
\indt Next, for lack of a precise reference, we establish some well known length bounds for use later on. \\
\begin{lemma} \label{filtration}
Suppose $(S, \mf{m}_S)$ is a local ring, $J \subset S$ is an ideal, and $M$ is an $S$-module.\\
\begin{itemize}
\item[(i)]	Given any elements, $y ,z \in S$, there is an exact sequence
	\begin{align*}
		0 \to \dfrac{\col{(J, zy)M}{z}{M}}{(J, y)M} \to \dfrac{M}{(J, y)M} \to \dfrac{(J, z)M}{(J, yz)M} \to 0
	\end{align*} 
\item[(ii)] If $M$ is finitely generated, $z_1, \dots, z_k \in \mf{m}_S$ are such that $(J, z_1, \dots, z_k)$ is $\mf{m}_S$-primary, and $n_1, \dots, n_k \ge 1,$ then
	\begin{align*}
		\len{S}{M/(J, z_1^{n_1}, \dots, z_k^{n_k})M} \le n_1 n_2 \dots n_k \len{S}{M/(J, z_1, \dots, z_k)M} \, .
	\end{align*}
\end{itemize}
\end{lemma}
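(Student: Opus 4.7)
The plan for part (i) is direct: construct the natural multiplication-by-$z$ map
\begin{align*}
\varphi: \frac{M}{(J, y)M} \longrightarrow \frac{(J, z)M}{(J, yz)M}, \qquad m + (J, y)M \longmapsto zm + (J, yz)M,
\end{align*}
and verify the three items. Well-definedness reduces to $z(J, y)M \subset (J, yz)M$, which is immediate. Surjectivity follows because an arbitrary element $jm_1 + zm_2$ of $(J, z)M$ reduces modulo $(J, yz)M$ to $zm_2$. For the kernel, $\varphi(m) = 0$ exactly when $zm \in (J, yz)M = (J, zy)M$, i.e. when $m \in \col{(J, zy)M}{z}{M}$, and $(J, y)M \subset \col{(J, zy)M}{z}{M}$ since $z(J, y)M \subset (J, yz)M$. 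This gives the claimed exact sequence.

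For part (ii), the strategy is to combine (i) with the obvious short exact sequence
\begin{align*}
0 \to \frac{(J, z)M}{(J, yz)M} \to \frac{M}{(J, yz)M} \to \frac{M}{(J, z)M} \to 0
\end{align*}
to extract the additive inequality
\begin{align*}
\len{S}{M/(J, yz)M} \le \len{S}{M/(J, y)M} + \len{S}{M/(J, z)M},
\end{align*}
valid whenever $(J, z)$ is $\mf{m}_S$-primary so that all three lengths are finite. Indeed (i) yields $\len{S}{(J, z)M/(J, yz)M} \le \len{S}{M/(J, y)M}$, and the displayed short exact sequence converts this into the additive inequality on lengths.

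From there, (ii) follows by induction on $n_1 + \cdots + n_k$. The base case where every $n_i = 1$ is trivial. For the inductive step, assume without loss of generality $n_1 \ge 2$, set $J' = (J, z_2^{n_2}, \dots, z_k^{n_k})$, and apply the additive inequality with $y = z_1^{n_1 - 1}$ and $z = z_1$ to bound $\len{S}{M/(J', z_1^{n_1})M}$ by $\len{S}{M/(J', z_1)M} + \len{S}{M/(J', z_1^{n_1 - 1})M}$. The inductive hypothesis applied to each summand yields the coefficients $n_2 \cdots n_k$ and $(n_1 - 1) n_2 \cdots n_k$, which sum to $n_1 n_2 \cdots n_k$, giving the desired bound.

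No step is a genuine obstacle here; the only thing to be careful about is checking that $(J, z_1, \dots, z_k)$ being $\mf{m}_S$-primary makes all lengths appearing along the way finite, which follows since the relevant quotients are always killed by a power of $\mf{m}_S$.
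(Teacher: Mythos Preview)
Your proof is correct and follows essentially the same approach as the paper. Part (i) is identical; for part (ii) the paper telescopes on powers of a single variable to get the $k=1$ case and then inducts on $k$, whereas you package the key step as the additive inequality $\len{S}{M/(J,yz)M}\le \len{S}{M/(J,y)M}+\len{S}{M/(J,z)M}$ and induct on $n_1+\cdots+n_k$, but both arguments rest on the same surjection from (i).
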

\begin{proof}
\begin{itemize}
\item[(i)]	The composition,
	\begin{align*}
		M \to \dfrac{M}{(J, yz)M} \xrightarrow{\cdot z} \dfrac{(J, yz, z)M}{(J, yz)M} = \dfrac{(J, z)M}{(J, yz)M}
	\end{align*}
	is surjective, and with kernel $\col{(J, zy)M}{z}{M}.$  There is a containment of submodules $(J, y)M \subset \col{(J, zy)M}{z}{M},$ and thus an induced surjection,
	\begin{align*}
		\dfrac{M}{(J, y)M} \to \dfrac{(J, z)M}{(J, zy)M},
	\end{align*}
	with kernel $\dfrac{\col{(J, zy)M}{z}{M}}{(J, y)M}.$
\item[(ii)]  When $k=1$, we have,
	\begin{align*}
	\len{S}{M/(J, z^n)M} = \len{S}{M/(J, z)M} + \sum_{k=1}^{n-1} \len{S}{\dfrac{(J, z^{k})M}{(J, z^{k+1})M}}
	\end{align*}
	By (i) above, for each of these $k$,
	\begin{align*}
		\len{S}{\dfrac{(J, z^{k})M}{(J, z^{k+1})M}} = \len{S}{M/(J, z)M} - \len{S}{\dfrac{\col{(J, z^{k+1})M}{z^k}{M}}{(J, z)M}} \, .
	\end{align*} \\ 
	So,
	\begin{align*}
		\len{S}{M/(J, z^n)M} &= \len{S}{M/(J, z)M} + \sum_{k=1}^{n-1} \left( \len{S}{M/(J, z)M} - \len{S}{\dfrac{\col{(J, z^{k+1})M}{z^k}{M}}{(J, z)M}}\right) \\
		&= n\len{S}{M/(J, z)M} - \sum_{k=1}^{n-1} \len{S}{\dfrac{\col{(J, z^{k+1})M}{z^k}{M}}{(J, z)M}} \, .
	\end{align*}
The general case follows by induction.
\end{itemize}
\end{proof}
\indt With those preliminary results out of the way, we turn to the main results of this section.  Lemma \ref{dLem} shows that, up to multiplication by an element that kills the relevant first Koszul homology module, a kind of 'coefficient-wise' Artin-Rees lemma applies uniformly when the minimal generators of an ideal are replaced by a perturbation.  This result, which is interesting in it's own right, is a key part of the proof of lemma \ref{mainlemma}.  
\begin{lemma}\label{dLem}
	Suppose $M$ is a finitely generated module over a Noetherian local ring $(R, \mf{m}_R),$ and that $I, J, L \subset R$ are ideals, and let $c = \mu (I)$ be the minimal number of generators of $I$.  Suppose that $d \in R$ annihilates the first Koszul homology of $I$ with coefficients in $M,$
	\begin{align*}
		d \hgy{1}{I; M} = 0
	\end{align*}
	and let $t = \AR{L}{ \ker\left(\partial^{M}_{1, I}\right) \subset M^{\oplus c}}{R}.$ \\
	\indt Suppose $m_1, \dots, m_h \in M,$ that $k \ge t,$ and $$z_1m_1 + \dots + z_hm_h \in L^{k+q}IM,$$ where the coefficients $z_1, \dots, z_h \in L^kI \subset R.$ \\
	Choose any minimal generators $\unideal{f} = (f_1, \dots, f_c) = I$ and write $$z_i = y_{i1}f_1 + \dots + y_{ic} f_c$$ with each $y_{ij} \in L^k.$ \\
	 Then, for any $\epsilon_1, \dots, \epsilon_c \in J,$ 
	 \begin{align*}
	 	d \left(\wt{z}_1m_1+\dots +\wt{z}_h m_h \right) \in \left(L^{k-t}J + dL^{k+q}\right)\unideal{f+\epsilon}M 	
	 \end{align*}
	 where
	 \begin{align*}
	 	\wt{z}_i = y_{i1} (f_1 + \epsilon_1) + \dots + y_{ic} (f_c+\epsilon_c) \, .
	 \end{align*}
	\end{lemma}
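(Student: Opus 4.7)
The plan is to lift the coefficient relation $\sum_i z_i m_i \in L^{k+q}IM$ to the level of Koszul cycles in $M^{\oplus c}$ and then transfer it across the perturbation using the hypothesis $d\hgy{1}{I; M} = 0$. Assemble the $y_{ij}$ into a single vector $U = (U_1, \dots, U_c) \in L^k M^{\oplus c}$ with $U_j = \sum_i y_{ij} m_i$. A direct computation gives
\begin{align*}
  \partial^M_{1, I}(U) = \sum_i z_i m_i \quad \text{and} \quad \partial^M_{1, \unideal{f+\epsilon}}(U) = \sum_i \wt{z}_i m_i,
\end{align*}
so the perturbed sum is just $\partial^M_{1, \unideal{f+\epsilon}}(U)$. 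The hypothesis $\sum_i z_i m_i \in L^{k+q}IM$ lets us write $\sum_i z_i m_i = \partial^M_{1, I}(b)$ for some $b \in L^{k+q} M^{\oplus c}$. Then $U - b$ lies in $\ker(\partial^M_{1, I}) \cap L^k M^{\oplus c}$, and since $k \ge t$ the definition of the Artin--Rees number yields $U - b = K$ for some $K \in L^{k-t} \ker(\partial^M_{1, I})$.

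Now invoke $d\hgy{1}{I; M} = 0$: every Koszul $1$-cycle becomes a $2$-boundary after multiplication by $d$. Writing $K = \sum_\ell r_\ell v_\ell$ with $r_\ell \in L^{k-t}$ and $v_\ell \in \ker(\partial^M_{1, I})$ and choosing $w_\ell \in M^{\oplus \binom{c}{2}}$ with $dv_\ell = \partial^M_{2, I}(w_\ell)$, we obtain $dK = \partial^M_{2, I}(W)$ with $W = \sum_\ell r_\ell w_\ell \in L^{k-t} M^{\oplus \binom{c}{2}}$. The composition $\partial^M_{1, \unideal{f+\epsilon}} \circ \partial^M_{2, I}$ is not zero, but it is very controlled: since $\partial^M_{1, \unideal{f+\epsilon}} \circ \partial^M_{2, \unideal{f+\epsilon}} = 0$ and the difference $\partial^M_{2, \unideal{f+\epsilon}} - \partial^M_{2, I}$ sends $e_i \wedge e_j$ to $\epsilon_i e_j - \epsilon_j e_i$ (and thus maps into $JM^{\oplus c}$), one reads off that $\partial^M_{1, \unideal{f+\epsilon}}(\partial^M_{2, I}(w)) \in \unideal{f+\epsilon} J M$ for every $w \in M^{\oplus \binom{c}{2}}$. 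Applying this to $W$ gives $d\,\partial^M_{1, \unideal{f+\epsilon}}(K) = \partial^M_{1, \unideal{f+\epsilon}}(dK) \in L^{k-t} J \unideal{f+\epsilon} M$.

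Putting the pieces together,
\begin{align*}
  d\sum_i \wt{z}_i m_i \;=\; d\,\partial^M_{1, \unideal{f+\epsilon}}(b) \;+\; d\,\partial^M_{1, \unideal{f+\epsilon}}(K),
\end{align*}
where the first summand sits in $dL^{k+q}\unideal{f+\epsilon} M$ (since $b$ has entries in $L^{k+q}M$) and the second in $L^{k-t}J\unideal{f+\epsilon} M$; their sum lies in $(L^{k-t}J + dL^{k+q})\unideal{f+\epsilon} M$, which is exactly the claimed containment. The main obstacle I anticipate is the bookkeeping around the two Koszul differentials: the identity $\partial^M_{1, \unideal{f+\epsilon}} \circ \partial^M_{2, I} = -\partial^M_{1, \unideal{f+\epsilon}} \circ (\partial^M_{2, \unideal{f+\epsilon}} - \partial^M_{2, I})$ is what allows the hypothesis on $\hgy{1}{I; M}$ to keep doing work after perturbation, and the only price paid is the extra factor of $J \unideal{f+\epsilon}$ appearing in the final inclusion.
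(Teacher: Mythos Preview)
Your proof is correct and follows essentially the same approach as the paper's. Both arguments package the data into a vector $U=(U_j)$ with $U_j=\sum_i y_{ij}m_i$, subtract off a vector $b\in L^{k+q}M^{\oplus c}$ lifting the relation $\sum_i z_i m_i\in L^{k+q}IM$, use Artin--Rees to push $U-b$ into $L^{k-t}\ker\partial^M_{1,I}$, and then use $d\hgy{1}{I;M}=0$ together with the fact that $\partial^M_{2,I}-\partial^M_{2,\unideal{f+\epsilon}}$ has entries in $J$ to control $\partial^M_{1,\unideal{f+\epsilon}}$ applied to the result. The only cosmetic difference is that the paper writes out $b$ explicitly in terms of auxiliary elements $n_1,\dots,n_\ell\in M$ and coefficients $s_{ij}\in L^{k+q}$, and phrases the perturbation step as an inclusion $\im{\partial^M_{2,I}}\subset\im{\partial^M_{2,\unideal{f+\epsilon}}}+JM^{\oplus c}$ rather than as the composition identity you use; these are equivalent formulations of the same step.
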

	\begin{proof}
	First note that the Artin-Rees number $t = \AR{L}{ \ker\left(\partial^{M}_{1, I}\right) \subset M^{\oplus c}}{R}$ is well defined, by lemma \ref{isoComp}, since the $\mathbf{K}\left(\unideal{f} ; M\right)$ are {\it isomorphic complexes} for all choices of minimal generators $\unideal{f} = I.$ \\
	\vspace{.2in}
	\indt We have $m = z_1m_1 + \dots + z_hm_h \in L^{k+q}IM,$
	so there are $n_1, \dots, n_\ell \in M,$ and $s_{ij} \in L^{k+q},$ $i=1, \dots, \ell$ and $j = 1, \dots, c,$ such that
	\begin{align*}
	 m &=  \sum_{i=1}^h \left(\sum_{j=1}^c y_{ij} f_j\right)m_i = \sum_{i=1}^\ell \left(\sum_{j=1}^c s_{ij} f_j\right)n_i.
	\end{align*}	
	 Setting $$\alpha_j = \left(\sum_{i=1}^h y_{ij} m_i\right) - \left(\sum_{i=1}^\ell s_{ij}n_i \right),$$ for each $j = 1, 2, \dots, c,$ the equality above can be rearranged into the form 
	 \begin{align*}
	 f_1 \alpha_1 + f_2 \alpha_2 + \dots + f_c \alpha_c = 0 \, .
	 \end{align*} \\
	 Noting that all the coefficients appearing in the $\alpha$'s are in $L^k,$ and that $k \ge \AR{L}{ \ker\left(\partial^{M}_{1, I}\right) \subset M^{\oplus c}}{R}$ we have
	 \begin{align*}
	 \left(\alpha_1, \dots, \alpha_c\right) \in \ker \left(\partial^{M}_{1, \unideal{f}}\right) \cap L^k M^{\oplus c} \subset L^{k-t}\ker \left(\partial^{M}_{1, \unideal{f}}\right) \, .
	 \end{align*}  
	 By assumption, the element $d$ annihilates $\hgy{1}{\unideal{f}; M},$ and hence we have, 
	 \begin{align} \label{one}
	 	d \left(\alpha_1, \dots, \alpha_c\right) \in L^{k-t} \im{\partial^{M}_{2, \unideal{f}}}
	 \end{align}
	 Now, choose any $\epsilon_1, \dots, \epsilon_c \in J,$ and note that the difference 
	 \begin{align*}
	 	\partial^{M}_{2, \unideal{f}} - \partial^{M}_{2, \unideal{f+\epsilon}}: M^{\oplus \binom{c}{2}} \to M^{\oplus c}
	 \end{align*}
	 takes values in $J M^{\oplus c},$ and there is an inclusion 
	 \begin{align} \label{two}
	 	\im{\partial^{M}_{2, \unideal{f}}} \subset \im{\partial^{M}_{2, \unideal{f+\epsilon}}} + J M^{\oplus c} \, .
	 \end{align}
	 Combining \ref{one} and \ref{two} we have shown,
	 \begin{align*}
	 	d \left(\alpha_1, \dots, \alpha_c\right) \in L^{k-t} \im{\partial^{M}_{2, \unideal{f}}} &\subset L^{k-t} \left(\im{\partial^{M}_{2, \unideal{f+\epsilon}}} + J M^{\oplus c} \right) \\
	 	&\subset \ker\left(\partial^{M}_{1, \unideal{f+\epsilon}}\right) + L^{k-t}JM^{\oplus c} \, .
	 \end{align*}
	 So, applying $\partial_{1, \unideal{f+\epsilon}}^{M}$ will give
	\begin{align*}
		\partial_{1, \unideal{f+\epsilon}}^{M} \left(d (\alpha_1, \dots, \alpha_c) \right) \in L^{k-t}J \,\, \im{\partial_{1, \unideal{f+\epsilon}}^{M}} = L^{k-t}J\unideal{f+\epsilon} M \, .
	\end{align*}
	Unwinding this, we have shown that
	\begin{align*}
	\partial_{1, \unideal{f+\epsilon}}^{M} \left(d (\alpha_1, \dots, \alpha_c) \right) &= d(f_1+\epsilon_1)\alpha_1 + \dots + d(f_c+\epsilon_c)\alpha_c \\
	&= d(f_1+\epsilon_1) \left(\left(\sum_{i=1}^h y_{i1} m_i\right) - \left(\sum_{i=1}^\ell s_{i1}n_i \right)\right) + \dots \\ &\hspace{1in} \dots + d(f_c+\epsilon_c) \left(\left(\sum_{i=1}^h y_{ic} m_i\right) - \left(\sum_{i=1}^\ell s_{ic}n_i \right)\right) \\
	&= d \left(\wt{z}_1 m_1+\dots +\wt{z}_h m_h \right) - d\beta_1 n_1 - \dots - d\beta_{\ell} n_\ell \\
	&\in L^{k-t}J\unideal{f+\epsilon} M
	\end{align*}
	with each $\beta_i = s_{i1}(f_1+\epsilon_1) + \dots + s_{ic}(f_c+\epsilon_c) \in L^{k+q}\unideal{f+\epsilon}R.$ \\
	\end{proof} 
\indt	Lemma \ref{dLem} was partly inspired by \cite{Mafilter}, corollary 3.6, where the authors show there is a similar kind of uniform bound on Artin-Rees numbers applying across small perturbations of a filter regular sequence.  Now we are ready to prove the main lemma of this section.
\begin{lemma} \label{mainlemma}
	Suppose $(R, \mf{m}_R)$ and $(A, \mf{m}_A)$ are Noetherian local rings, that $I \subset R$ is an ideal, and $D$ is a commuting diagram of local rings and local ring maps as before.\\
Suppose that $M$ is a finitely generated $R$-module, and $N \subset M$ is a finitely generated $A$-submodule.
 	Assume:
 	\begin{itemize}
 	\item[(i)] There is a $d \in A$ satisfying condition ($*$) for $(D, N \subset M)$; \\
 	\item[(ii)] $\depth{\mf{m}_A}{N} \ge 1,$ so there is an $x \in \mf{m}_A$ which is a nzd on $N$; \\
 	\item[(iii)] $N \cap IM = 0.$
 	\end{itemize}
 	Then, there is a $T = T\left(d, D, N, M\right) \in \mathbb{N},$ such that, for all minimal generators $(f_1, \dots, f_c) = \unideal{f} = I,$ and any $\epsilon_1, \dots, \epsilon_c \in \mf{m}_R^T,$ $$N \cap \unideal{f+\epsilon}M = 0.$$
\end{lemma}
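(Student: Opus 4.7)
The plan is to show that $N_\epsilon := N \cap \unideal{f+\epsilon}M \subseteq \mf{m}_A^k N$ for every $k \ge 0$; Krull's intersection theorem applied to the finitely generated $A$-module $N$ then gives $N_\epsilon = 0$. The argument proceeds by induction on $k$, with an Artin-Rees calculation handling the base case and Lemma \ref{dLem} driving the inductive step.

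First I would fix several Artin-Rees numbers depending only on $(d, D, N, M)$: let $s := \AR{\mf{m}_A}{\phi(N) \subset \ol{M}}{A}$, where $\phi : N \hookrightarrow M \twoheadrightarrow \ol{M}$ is injective by hypothesis (iii); $s_0 := \AR{\mf{m}_A R}{IM \subset M}{R}$; and $t := \AR{\mf{m}_A R}{\knl{\partial^{M}_{1, I}} \subset M^{\oplus c}}{R}$, which is well-defined (independent of the choice of minimal generators of $I$) by Lemma \ref{isoComp}. Since $\ol{R}$ is module-finite over $A$, there is $T_0$ with $\mf{m}_R^{T_0} \ol{M} \subseteq \mf{m}_A \ol{M}$, so that $\mf{m}_R^{jT_0} \ol{M} \subseteq \mf{m}_A^j \ol{M}$ for every $j$. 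For the base case, if $n \in N_\epsilon$ with $n = \sum (f_i + \epsilon_i)\, m_i$, reducing modulo $IM$ gives $\bar n = \sum \epsilon_i\, \bar m_i \in \mf{m}_R^T \ol{M} \subseteq \mf{m}_A^{\lfloor T/T_0 \rfloor} \ol{M}$, and Artin-Rees combined with the injectivity of $\phi$ lifts this to $n \in \mf{m}_A^{\lfloor T/T_0 \rfloor - s} N$.

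For the inductive step, assume $n \in N_\epsilon \cap \mf{m}_A^k N$. Writing $n = \sum \alpha_i\, n_i$ with $\alpha_i \in \mf{m}_A^k$ and $n_i \in N$, and equating with $n = \sum (f_j + \epsilon_j)\, m_j$, yields $\sum f_j\, m_j \in \mf{m}_A^k M \cap IM \subseteq \mf{m}_A^{k - s_0} IM$ by Artin-Rees. Expressing $\sum f_j\, m_j = \sum z_i\, m_i^*$ with $z_i \in \mf{m}_A^{k - s_0} I$ puts this in precisely the form Lemma \ref{dLem} requires, taking $L = \mf{m}_A R$ and $J = \mf{m}_R^T$; its conclusion places $d(\sum \tilde z_i\, m_i^*)$ in $(\mf{m}_A^{k - s_0 - t}\mf{m}_R^T + d\, \mf{m}_A^{k - s_0 + q}) \unideal{f+\epsilon}M$. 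Since $\sum \tilde z_i\, m_i^* = n + (\text{error in } \mf{m}_R^T M)$, choosing $T$ so that $\mf{m}_R^T \subseteq \mf{m}_A^{s_0 + t + 1} R$ forces the error and the ideal factor to both lie deep in $\mf{m}_A M$, translating into $dn \in \mf{m}_A^{k+1} M \cap N$; a final application of Artin-Rees to $N \subseteq M$ as finite $A$-modules (valid because $x \in \mf{m}_A$ is a nonzerodivisor on $N$, so $N$ is separated in the $\mf{m}_A$-adic topology) then places $dn$ in $\mf{m}_A^{k+1} N$. Because $d$ is a nonzerodivisor on $N$ and $dn$ again belongs to $N_\epsilon$, a standard Artin-Rees argument for the inclusion $dN \subseteq N$ transfers the depth bound on $dn$ back to $n$, yielding $n \in \mf{m}_A^{k+1} N$ and closing the induction.

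The main obstacle is the accounting in the inductive step: Lemma \ref{dLem} naturally outputs a bound on $d \cdot (\sum \tilde z_i\, m_i^*)$, not on $n$ itself, and the $d L^{k+q}$-term in its conclusion threatens to absorb the depth gain unless $T$ is chosen large enough that the contribution from $\mf{m}_R^T$ strictly dominates the combined Artin-Rees losses $s, s_0, t$. Both nonzerodivisor hypotheses are essential and play distinct roles: $d$ nonzerodivisor on $N$ is what allows one to cancel the factor of $d$ introduced by Lemma \ref{dLem} and convert depth on $dn$ into depth on $n$, while $x \in \mf{m}_A$ nonzerodivisor on $N$ ensures $N$ is separated in the $\mf{m}_A$-adic topology so that Krull's intersection theorem closes the argument.
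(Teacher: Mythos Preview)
Your inductive step has a genuine gap: the very first assertion, that $\sum_j f_j m_j \in \mf{m}_A^k M \cap IM$, is not justified for large $k$. From $n = \sum_j (f_j+\epsilon_j) m_j$ you get $\sum_j f_j m_j = n - \sum_j \epsilon_j m_j$. The term $n$ lies in $\mf{m}_A^k M$ by the inductive hypothesis, but $\sum_j \epsilon_j m_j$ lies only in $\mf{m}_R^T M$, and there is no inclusion $\mf{m}_R^T M \subseteq \mf{m}_A^k M$ for $k$ large (the module $M$ is not finite over $A$, only $\ol{M}$ is). Even after arranging $\epsilon_j \in \mf{m}_A^H R$ via Lemma~\ref{mprimaryepsilons}, this term is only in $\mf{m}_A^H M$ for a \emph{fixed} $H$, so the argument stalls once $k > H$. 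The same problem recurs downstream: the term $d\,\mf{m}_A^{k-s_0}\unideal{f+\epsilon}M$ coming out of Lemma~\ref{dLem} is, in $M$, only contained in $\mf{m}_A^{k-s_0}M$, which is a loss of $s_0$ rather than a gain.

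The missing idea is that one must simultaneously track a second filtration, namely the $\mf{m}_A$-adic depth of the expression of $n$ inside $\unideal{f+\epsilon}M$. In the paper's proof this is Claim~2, which shows
\[
d\Bigl[\bigl(\mf{m}_A^{\ell+H-t}N\bigr)\cap\bigl(\mf{m}_A^{\ell}\,\unideal{f+\epsilon}M\bigr)\Bigr]\subseteq \bigl(\mf{m}_A^{\ell+2H-3t}N\bigr)\cap\bigl(\mf{m}_A^{\ell+H-2t}\,\unideal{f+\epsilon}M\bigr).
\]
The point is that starting from $m \in \mf{m}_A^{\ell}\unideal{f+\epsilon}M$ allows one to write $m = \sum_i \bigl(\sum_j y_{ij}(f_j+\epsilon_j)\bigr)m_i$ with $y_{ij}\in \mf{m}_A^{\ell}$, so that the error term $\sum y_{ij}\epsilon_j m_i$ lies in $\mf{m}_A^{\ell+H}M$ and \emph{keeps pace with $\ell$}. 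The actual gain of $H-2t$ per step then comes from reducing modulo $IM$ and using $\unideal{f+\epsilon}\ol{M}\subseteq \mf{m}_A^H\ol{M}$, which converts depth in $\unideal{f+\epsilon}M$ into extra $\mf{m}_A$-depth in $\ol{M}$. Your single-filtration induction cannot access this mechanism, because the coefficients $m_j$ in your representation $n=\sum(f_j+\epsilon_j)m_j$ carry no $\mf{m}_A$-adic control.
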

	\begin{proof}
	Assume (i), (ii) and (iii) hold. \\
	\indt  By assumption $N \cap I M = 0,$ and with this in mind, we will freely identify the finite $A$ module $N$ with it's image in $\ol{M} = M/IM.$ \\
	\indt Label the following Artin-Rees numbers: 
\begin{align*}
	t_1 &= \AR{\mf{m}_A R}{IM \subset M}{R}, \\
	t_2 &= \AR{\mf{m}_A R}{ \knl{\partial^{M}_{1, I}} \subset M^{\oplus c}}{R}, \\
	t_3 &= \AR{\mf{m}_A}{dN \subset \ol{M}}{A}, \\
	t_4 &= \AR{\mf{m}_A}{N \subset \ol{M}}{A}. \\
\end{align*}
\indt Fix $t \ge \max(t_1, t_2, t_3, t_4),$ set $H = 3t+1,$ and choose $T$ large enough that $$\mf{m}_R^T \subset \mf{m}_R \left(I + \mf{m}_A^H R\right).$$	\\
\indt Suppose $(f_1, \dots, f_c) = I$ are minimal generators and $\epsilon_1, \dots, \epsilon_c \in \mf{m}_R^T.$\\
\indt We are interested in the ideal $(f_1+\epsilon_1, \dots, f_c+\epsilon_c),$ and, according to lemma \ref{mprimaryepsilons} (ii), for the sake of what follows, we may assume $\epsilon_1, \dots, \epsilon_c \in \mf{m}_A^H R$ --- after harmlessly exchanging the $f$'s with a different set of minimal generators, if necessary.  \\
\begin{claim} \label{clm1}
	Suppose that $m \in N,$ and $d^k m \in \mf{m}_A^n M,$ with $n > kt.$  Then $$m \in \mf{m}_A^{n - kt} N \, .$$
\end{claim}
\begin{proof}
First suppose that $m \in N,$ and $dm \in \mf{m}_A^n M$ with $n \ge t.$  Then, going modulo $IM,$ and identifying $N$ with it's image, 
\begin{align*}
	d m \in \left(dN\right) \cap \left(\mf{m}_A^n \ol{M} \right) \subset \mf{m}_A^{n-t} dN.
\end{align*}
By assumption, $d$ is a nzd on $N$, and so $m \in \mf{m}_A^{n-t}N.$  This establishes the claim for $k=1,$ and the larger values follow by induction.
\end{proof}
\begin{claim} \label{clm2}
	For $\ell \ge t,$  
	\begin{align*}
	d \left[ \left( \mf{m}_A^{\ell + H - t} N \right) \cap \left(\mf{m}_A^\ell \unideal{f+\epsilon} M\right)\right] \subset \left[\left(\mf{m}_A^{\ell + 2H - 3t}N\right) \cap \left(\mf{m}_A^{\ell + H - 2t}\unideal{f+\epsilon}M\right)\right] \, . \\
	\end{align*}
\end{claim}
\begin{proof}
Suppose $$m \in \left( \mf{m}_A^{\ell + H - t} N \right) \cap \left(\mf{m}_A^\ell \unideal{f+\epsilon} M\right),$$ so that we may write
\begin{align*}
	m &= \left(\sum_{j=1}^c y_{1j}(f_j + \epsilon_j)\right) m_1 + \dots + \left(\sum_{j=1}^c y_{hj}(f_j + \epsilon_j)\right) m_h \in \mf{m}_A^{\ell + H -t} N \subset \mf{m}_A^{\ell + H -t} M,
\end{align*}
with $m_1, \dots, m_h \in M$ and the $y_{ij} \in \mf{m}_A^{\ell}R.$ \\
\vspace{.1in}
The products $y_{ij} \epsilon_j$ are all in $\mf{m}_A^{\ell + H}R,$ and thus the difference
\begin{align} \label{sum}
	m - &\left(\sum_{j=1}^c y_{1j}\epsilon_j\right)m_1 - \dots - \left(\sum_{j=1}^c y_{hj}\epsilon_j\right)m_h = \left(\sum_{j=1}^c y_{1j} f_j\right)m_1 + \dots + \left(\sum_{j=1}^c y_{hj} f_j\right)m_h
\end{align}
belongs to $\left[\left(\mf{m}_A^{\ell+H-t} M\right) \cap I M \right] \subset \mf{m}_A^{\ell+H-2t} IM.$ \\
The right hand side of (\ref{sum}) is in $\mf{m}_A^{\ell+H-2t} IM$ and each of the $y_{ij} \in \mf{m}_A^{\ell},$ so we may apply lemma \ref{dLem}, with $L = \mf{m}_A R$ and  $J = \mf{m}_A^HR,$ to conclude that
\begin{align*}
	dm &= d\left[\left(\sum_{j=1}^c y_{1j} (f_j+ \epsilon_j)\right)m_1 + \dots + \left(\sum_{j=1}^c y_{hj} (f_j+\epsilon_j)\right)m_h \right] \\ 
	&\in \left(\mf{m}_A^{\ell+H-t}R + d \mf{m}_A^{\ell+H-2t}R\right)\unideal{f+\epsilon}M \subset \mf{m}_A^{\ell+H-2t} \unideal{f+\epsilon}M \, .
\end{align*} \\
Now, $dm \in N$ as well.  Identifying $N$ with it's image in $\ol{M},$ and noting that $$\unideal{f+\epsilon} \dfrac{R}{I} \subset \mf{m}_A^H \dfrac{R}{I},$$ we see that $$dm \in N \cap \left(\mf{m}_A^{\ell+2H-2t} \ol{M}\right) \subset \mf{m}_A^{\ell+2H-3t}N,$$ and the claim is proved.
\end{proof}
Now to establish the lemma. \\
\indt Applying claim \ref{clm2} $k$ times and using $H = 3t+1,$ we have
	\begin{align*}
	d^k \left[ \left( \mf{m}_A^{\ell + H - t} N \right) \cap \left(\mf{m}_A^\ell \unideal{f+\epsilon} M\right)\right] &\subset \left[\left(\mf{m}_A^{\ell + (k+1)H - (2k+1)t}N\right) \cap \left(\mf{m}_A^{\ell + k(H - 2t)}\unideal{f+\epsilon}M\right)\right] \\
	&\subset \mf{m}_A^{\ell + (k+1)(3t+1) - (2k+1)t} M\\
	&\subset \mf{m}_A^{\ell + kt + 2t + k + 1} M \subset \mf{m}_A^{\ell + kt + k + 1} M \, . \\
	\end{align*}
Now claim \ref{clm1} gives,
	\begin{align*}
	\left[ \left( \mf{m}_A^{\ell + H - t} N \right) \cap \left(\mf{m}_A^\ell \unideal{f+\epsilon} M\right)\right] \subset \mf{m}_A^{\ell + k + 1} M  \, .
	\end{align*}
	This inclusion is true for every $k,$ so we conclude that $$\left( \mf{m}_A^{\ell + H - t} N \right) \cap \left(\mf{m}_A^\ell \unideal{f+\epsilon} M\right) = 0$$ for any $\ell \ge t,$ by Krull intersection (note that $\mf{m}_A^{\ell+k+1}M = \left(\mf{m}_AR\right)^{\ell+k+1}M$). \\
	\vspace{.1in}
	So, take any $\ell \ge t$ and consider 
	\begin{align*}
	B = \mf{m}_A^\ell \left[N \cap \unideal{f+\epsilon}M \right] \subset N \cap \mf{m}_A^\ell \unideal{f+\epsilon}M.
	\end{align*}
	Going modulo $I M,$ and identifying $B$ and $N$ with their images, we have
	\begin{align*}
	B \subset \left[N \cap \mf{m}_A^\ell \unideal{f+\epsilon} \ol{M} \right] \subset \left[N \cap \mf{m}_A^{\ell + H} \ol{M}\right] \subset \mf{m}_A^{\ell + H - t} N \, .
	\end{align*}
	It follows that $$B \subset \left( \mf{m}_A^{\ell + H - t} N \right) \cap \left(\mf{m}_A^\ell \unideal{f+\epsilon} M\right) = 0.$$ \\
	So, we have shown that for every $\ell \ge t,$ $$\mf{m}_A^\ell \left[N \cap \unideal{f+\epsilon}M \right] = 0$$
	We are assuming that $\mf{m}_A$ contains a nzd on $N,$ and therefore every element of $N \cap \unideal{f+\epsilon}M$ must be zero.
	\end{proof}
	\begin{obs}  More generally, if we don't assume that $N$ has positive depth on $A$, the proof shows that if the $\epsilon$'s are chosen as prescribed, all of the $N \cap \unideal{f+\epsilon}M$ are uniformly annihilated by $\mf{m}_A^t.$
	\end{obs} \\
\subsection{Refinements} \label{refine}
\indt We make repeated use of a particular application of lemma \ref{mainlemma}. 
\begin{thm} \label{freeper}
Suppose that $(R, \mf{m}_R)$ and $(A, \mf{m}_A)$ are Noetherian local rings and that $I \subset R$ is an ideal.  Assume these are related as in diagram $D$.  Set $\ol{R} = R/I$ and assume that $A$ satisfies $\depth{\mf{m}_A}{A} \ge 1$.  \\
\indt Suppose $M$ is a finite $R$-module such that $d \hgy{1}{I; M} = 0$ for some $d \in A$ which is a nzd in $A$. Let $m_1, \dots, m_n \in M$ be elements such that the image of 
\begin{align*}
	N := Am_1 + \dots + Am_n
\end{align*}
under the quotient map $M \to \ol{M} = M/IM$ is a free $A$-module of rank $n$.  \\
\indt Then, there is a $T = T(D, M, N),$ such that for all $\epsilon_1, \dots, \epsilon_c \in \mf{m}_R^T$ and all minimal generators $(f_1, \dots, f_c) = I$ the image of $N$ in $\olper{M}$ is a free $A$-module of rank $n$.
\end{thm}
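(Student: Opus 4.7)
The plan is to reduce the statement to a direct application of Lemma \ref{mainlemma}. The first step is to observe that $N$ itself is already free of rank $n$ over $A$ and that $N \cap IM = 0$. To see this, let $\varphi : A^{\oplus n} \to N$ be the surjection sending the $i$-th standard basis vector to $m_i$. By hypothesis, the composite
\begin{align*}
A^{\oplus n} \xrightarrow{\varphi} N \twoheadrightarrow \ul{N} \subset \ol{M}
\end{align*}
sends $e_i$ to $\ol{m}_i$, and $\ul{N}$ is a free $A$-module of rank $n$ with basis $\{\ol{m}_i\}$; hence this composite is an isomorphism. Consequently $\varphi$ is injective and therefore an isomorphism, and the kernel of $N \twoheadrightarrow \ul{N}$, namely $N \cap IM$, is zero.

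Next I would verify the hypotheses of Lemma \ref{mainlemma} for $(D, N \subset M)$ using the given $d \in A$. Condition $(*)$ holds: $d\hgy{1}{I; M} = 0$ by assumption, and since $d$ is a nzd in $A$ and $N \cong A^{\oplus n}$, $d$ is a nzd on $N$. The depth hypothesis $\depth{\mf{m}_A}{N} \ge 1$ holds for the same reason, using $\depth{\mf{m}_A}{A} \ge 1$. Finally, $N \cap IM = 0$ was just established. Lemma \ref{mainlemma} then produces a $T = T(d, D, N, M) \in \mathbb{N}$ such that $N \cap \unideal{f+\epsilon}M = 0$ for all minimal generating sets $(f_1, \dots, f_c) = I$ and all $\epsilon_1, \dots, \epsilon_c \in \mf{m}_R^T$; since $d$ is supplied once and for all by the hypotheses of the theorem, the resulting $T$ depends only on $D$, $M$, and $N$.

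The vanishing $N \cap \unideal{f+\epsilon}M = 0$ means precisely that the composition $N \hookrightarrow M \twoheadrightarrow \olper{M}$ is injective, so the image of $N$ in $\olper{M}$ is isomorphic to $N \cong A^{\oplus n}$, i.e.\ free of rank $n$ as an $A$-module, which is the desired conclusion. There is no additional obstacle here: all of the technical content has been absorbed into Lemma \ref{mainlemma}. The only genuinely delicate ingredient is the preliminary reduction from ``the image of $N$ in $\ol{M}$ is free of rank $n$'' to ``$N \cong A^{\oplus n}$ and $N \cap IM = 0$'', and this is what allows the freeness assumption of the theorem to be repackaged as hypotheses (ii) and (iii) of the lemma.
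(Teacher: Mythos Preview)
Your proof is correct and follows essentially the same approach as the paper: verify that the freeness hypothesis on the image of $N$ in $\ol{M}$ forces $N \cong A^{\oplus n}$ and $N \cap IM = 0$, then check that the remaining hypotheses of Lemma \ref{mainlemma} follow from $d$ being a nzd in $A$ and $\depth{\mf{m}_A}{A} \ge 1$, and finally read off the conclusion. Your treatment of the preliminary reduction via the surjection $\varphi: A^{\oplus n} \to N$ is slightly more explicit than the paper's, but the argument is otherwise identical.
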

\begin{proof}
  We show the conditions of lemma \ref{mainlemma} are satisfied: \\ 
\indt First of all, any nonzero element of $N \cap IM$ would be mapped to a non-trivial relation in the image of $N$ under the quotient map, $M \to \ol{M}.$  Since we are assuming this image is free of rank $n$, we must have $N \cap IM = 0,$ and, consequently, $N \cong A^{\oplus n}.$  It follows that the nzd $d \in A$ is a nzd on $N$, and thus $d$ satisfies condition $(*)$ for $(D, N \subset M).$  Finally, since $N$ is free, $\depth{\mf{m}_A}{N} = \depth{\mf{m}_A}{A} \ge 1$.  \\
\indt By lemma \ref{mainlemma} there is a $T$ such that $N \cap \unideal{f+\epsilon}M = 0$ for all minimal $\unideal{f} = I$ and all $\epsilon_1, \dots, \epsilon_{\mu(I)} \in \mf{m}_R^T.$  For these $\unideal{f+\epsilon},$ $N \cong A^{\oplus n}$ is isomorphic to it's image in $M/\unideal{f+\epsilon}M.$
\end{proof}
\begin{remark}
 As noted in remark \ref{injmap}, applying theorem \ref{freeper} to the free $A$-module $A \cdot 1 \subset R$ shows that the compositions, $A \xhookrightarrow{} R \twoheadrightarrow \olper{R},$ are injections for large $T$.  So, when $R$ and $A$ are complete, lemma \ref{mgenslemma} guarantees these are module finite extensions, $A \xhookrightarrow{} \olper{R},$ and for $T \gg 0$.  In particular, this means that the dimensions of sufficiently small perturbations of $\ol{R}$ will all agree. 
\end{remark} \\
\vspace{.1in}
\indt Now we have arrived at the promised extension of corollary \ref{rankle}.
\begin{cor} \label{rankper}
	Suppose that $(R, \mf{m}_R), (A, \mf{m}_A)$ and $(\ol{R}, \mf{m}_{\ol{R}})$ are complete local rings related as in the commuting diagram, $D$, with $\ol{R} = R/I$ and $A$ a domain.  \\
\indt Assume that $M$ is a finite $R$-module such that $d \hgy{1}{I; M} = 0$ for some $d \neq 0$ in $A$. \\
\indt Then, there is a $T = T(D, M),$ such that for all $\epsilon_1, \dots, \epsilon_c \in \mf{m}_R^T$ and all minimal generators $(f_1, \dots, f_c) = I$
	\begin{align*}
		\rank{A}{\ol{M}} = \rank{A}{\olper{M}} \, .
	\end{align*}
\end{cor}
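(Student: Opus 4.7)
The plan is to sandwich $\rank_A(\olper{M})$ between $n := \rank_A(\ol{M})$ and itself using the two tools already in place: Theorem \ref{freeper} provides the lower bound, and Corollary \ref{rankle} provides the upper bound. The corollary is therefore essentially a direct application, but one must first realize the maximal-rank free piece of $\ol{M}$ as the reduction of an honest $A$-submodule of $M$.

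First I would choose, using that $A$ is a domain, elements $m_1, \dots, m_n \in M$ whose images in $\ol{M}$ span a free $A$-submodule of rank $n = \rank_A(\ol{M})$ (if $n = 0$ this step is vacuous and only the upper bound step is needed). Set
\[
N := Am_1 + \dots + Am_n \subset M.
\]
As observed in Remark \ref{obsRank}(ii), since the image of $N$ in $\ol{M}$ is free of rank $n$, $N$ itself is free of rank $n$ and meets $IM$ trivially. Next I would verify the hypotheses of Theorem \ref{freeper}: the element $d$ is a nonzero divisor on $A$ because $A$ is a domain and $d \neq 0$; the image of $N$ in $\ol{M}$ is free of rank $n$ by construction; and $d \hgy{1}{I; M} = 0$ by assumption. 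The remaining hypothesis, $\depth_{\mf{m}_A}(A) \ge 1$, holds whenever $\dim A \ge 1$ because every nonzero element of $\mf{m}_A$ is a nonzero divisor in the domain $A$.

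Theorem \ref{freeper} then supplies a $T_1 = T_1(D, M, N)$ such that for all $\epsilon_1, \dots, \epsilon_c \in \mf{m}_R^{T_1}$ and all minimal generators $(f_1, \dots, f_c) = I$, the image of $N$ in $\olper{M}$ is free of rank $n$ over $A$. In particular,
\[
\rank_A\!\left(\olper{M}\right) \ge n = \rank_A\!\left(\ol{M}\right).
\]
Corollary \ref{rankle} provides a second threshold $T_2 = T_2(D, M)$ giving $\rank_A(\olper{M}) \le \rank_A(\ol{M})$ under the same kind of perturbation. Taking $T = \max(T_1, T_2)$ yields equality.

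The main (and only) subtlety is the degenerate case in which $A$ is a field, where Theorem \ref{freeper} does not literally apply because $\depth_{\mf{m}_A}(A) = 0$. Here, however, $\rank_A = \mu_A$, and Lemma \ref{mgenslemma} gives $\mu_A(\ol{M}) = \mu_A(\olper{M})$ for all sufficiently small perturbations directly, so the corollary holds in this case too. I do not anticipate any deeper obstruction, since all of the Artin–Rees/Koszul machinery has been absorbed into the statement of Theorem \ref{freeper}.
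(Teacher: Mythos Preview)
Your proposal is correct and follows essentially the same route as the paper: use Theorem~\ref{freeper} on a free $A$-submodule $N \subset M$ of maximal rank to get the lower bound $\rank_A(\olper{M}) \ge \rank_A(\ol{M})$, and Corollary~\ref{rankle} for the upper bound. The only cosmetic difference is in the degenerate case $\dim A = 0$: the paper invokes Corollary~\ref{nakcordim0} (so that $\ol{M} = \olper{M}$ literally), whereas you use Lemma~\ref{mgenslemma} and the identification $\rank_A = \mu_A$ over a field---both are fine.
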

\begin{proof}
If $A$ is a field, then $\ol{R}$ has Krull dimension $0$ and $I$ must be $\mf{m}_R$-primary.  In this case, corollary \ref{nakcordim0} shows that, in fact, all of the $\olper{M}$ are equal to $\ol{M}$ if the $\epsilon$'s are in a sufficiently high power of $\mf{m}_R$. \\
\indt If $\ol{M}$ is torsion as an $A$-module, this is already settled in corollary \ref{rankle}. \\
\indt It remains to establish the claim when $A$ is a local domain of dimension $\ge 1$ and $\ol{M}$ has positive rank as an $A$-module. \\ 
\indt Set $k = \rank{A}{\ol{M}},$ and choose elements, $n_1, \dots, n_k \in M$ that map to a basis for a free submodule of $\ol{M}$ of maximal $A$-rank, $k,$ and set $$N = An_1 + \dots + An_k.$$ By theorem \ref{freeper} there is a $T = T(d, D, N, M),$ such that for every $\epsilon_1, \dots, \epsilon_c \in \mf{m}_R^T$ and every minimal $(f_1, \dots, f_c) = I,$ the image of $N$ in $\olper{M}$ is free of rank $k$ over $A$. \\
\indt It follows that 
\begin{align*}
	\rank{A}{\olper{M}} \ge \rank{A}{\ol{M}}.
\end{align*} 
On the other hand, by corollary \ref{rankle}, if $T$ is large enough, the opposite inequality also holds,
\begin{align*}
	\rank{A}{\olper{M}} \le \rank{A}{\ol{M}}.
\end{align*} 
Thus, for $T \gg 0$, we get equality 
\begin{align*}
	\rank{A}{\olper{M}} = \rank{A}{\ol{M}}
\end{align*} 
and the result is proved.
\end{proof}
\indt Before moving on to discuss multiplicities, it is worth mentioning another interesting application of the results above.  Assume the setup of theorem \ref{freeper}, and that the conditions of the theorem are satisfied with $M = R.$  Also assume that $R$ and $A$ are complete, so that lemma \ref{mgenslemma} applies.  For the sake of exposition, let $(f_1, \dots, f_c) = I$ denote any minimal generators for $I,$ and suppose $\epsilon_1, \dots, \epsilon_c \in \mf{m}_R ^T,$ for some $T,$ to be specified below. \\
\vspace{.1in}
\indt If the finite map in our diagram, $A \xhookrightarrow{} \ol{R},$ is flat, $\ol{R}$ is isomorphic to a finite free $A$-module. Assume this is the case.  Then elements, $r_1, \dots, r_n \in R,$ that map to minimal $A$-generators for $\ol{R}$ span a free $A$-submodule of $\ol{R},$ namely $\ol{R}$ itself.  According to lemma \ref{mgenslemma}, if $T$ is large enough, these elements will continue to be minimal $A$-generators for $R/\unideal{f+\epsilon},$ and, by theorem \ref{freeper}, for $T \gg 0$, they will continue to span a free $A$-module.  This means that the induced maps, $A \xhookrightarrow{} R/\unideal{f+\epsilon},$ are also flat for $T \gg 0.$  Furthermore, these values of $T$ have been constructed, in the proofs of lemmas \ref{mgenslemma} and \ref{mainlemma}, such that $I + \mf{m}_AR = \unideal{f+\epsilon} + \mf{m}_A R.$  In other words, the closed fibers over $A$,
\begin{align*}
	\olper{R}/\mf{m}_A\olper{R} = \dfrac{R}{\unideal{f+\epsilon} + \mf{m}_A R} = \dfrac{R}{I + \mf{m}_AR} = \ol{R}/\mf{m}_A\ol{R},
\end{align*}     
are all equal.  We record this as theorem \ref{flatfiberper}. 
\begin{thm} \label{flatfiberper}
  Suppose $(R, \mf{m}_R), (A, \mf{m}_A)$ are complete Noetherian local rings, that $I \subset R$ is an ideal, and that $R$, $A$ and $\ol{R} = R/I$ are related as in diagram $D$.  Assume $\depth{\mf{m}_A}{A} \ge 1,$ and that $d \hgy{1}{I; R} = 0,$ for some nzd $d \in A.$ \\
\indt  Suppose the module finite map, $A \xhookrightarrow{} \ol{R},$ appearing in $D$ is flat.  Then, there is a $T \in \mathbb{N}$ such that for all $\epsilon_1, \dots, \epsilon_c \in \mf{m}_R^T$ and all minimal generators $(f_1, \dots, f_c) = I,$
\begin{itemize}
	\item[(i)] the induced map, $A \xhookrightarrow{} \olper{R},$ is flat;
	\item[(ii)] $\dfrac{\olper{R}}{\mf{m}_A \olper{R}} \cong \dfrac{\ol{R}}{\mf{m}_A \ol{R}}$.
\end{itemize}
\end{thm}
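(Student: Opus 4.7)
The plan is to assemble this theorem directly from Lemma \ref{mgenslemma}, Theorem \ref{freeper}, and Lemma \ref{naklemma}, along with the standard fact that a finitely generated flat module over a local ring is free. The author has effectively sketched this strategy in the paragraph immediately preceding the theorem statement; the job is to make those observations precise and check the numerical compatibility of the various threshold bounds.

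First I would set up the free basis: since $A$ is local and $A \xhookrightarrow{} \ol{R}$ is finite and flat, $\ol{R}$ is a finitely generated projective, and hence free, $A$-module of some rank $n$. Choose $r_1, \dots, r_n \in R$ whose images $\ol{r}_1, \dots, \ol{r}_n$ form an $A$-basis for $\ol{R}$, and set $N := Ar_1 + \dots + Ar_n \subset R$. The composition $A^{\oplus n} \twoheadrightarrow N \twoheadrightarrow N/(N \cap IR)$ lands in $\ol{R}$ as the basis $\ol{r}_1, \dots, \ol{r}_n$, so the composition is an isomorphism. In particular $N \cap IR = 0$ and $N$ is itself $A$-free of rank $n$, with image in $\ol{R}$ equal to all of $\ol{R}$.

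Next I would invoke the two perturbation results. By Theorem \ref{freeper}, applied with $M = R$ (its hypotheses are met: $d\hgy{1}{I;R}=0$ with $d$ a nonzerodivisor of $A$, and $\depth{\mf{m}_A}{A} \ge 1$), there is $T_1$ such that for every minimal $(f_1,\dots,f_c) = I$ and every $\epsilon_1,\dots,\epsilon_c \in \mf{m}_R^{T_1}$, the image of $N$ in $\olper{R}$ remains $A$-free of rank $n$. By Lemma \ref{mgenslemma} there is $T_2$ such that for $\epsilon_i \in \mf{m}_R^{T_2}$ the images of $r_1,\dots,r_n$ are $A$-module generators of $\olper{R}$. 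Choosing $T \ge \max(T_1, T_2)$ large enough that, in addition, $\mf{m}_R^T \subset \mf{m}_R(I + \mf{m}_AR)$, the image of $N$ is simultaneously all of $\olper{R}$ and free of rank $n$, so $\olper{R} \cong A^{\oplus n}$. This gives (i), since freeness certainly implies flatness. For (ii), the condition $\mf{m}_R^T \subset \mf{m}_R(I + \mf{m}_AR)$ lets me apply Lemma \ref{naklemma} with $J = \mf{m}_A R$ to obtain $I + \mf{m}_A R = \unideal{f+\epsilon} + \mf{m}_A R$, whence
\[
\olper{R}/\mf{m}_A\olper{R} \;=\; R/(\unideal{f+\epsilon} + \mf{m}_A R) \;=\; R/(I + \mf{m}_A R) \;=\; \ol{R}/\mf{m}_A\ol{R}.
\]

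There is no real obstacle here; the content has already been extracted in Theorem \ref{freeper}. The only subtlety worth flagging is the argument that $N$ itself, not merely its image in $\ol{R}$, is $A$-free of rank $n$, which is needed to cite Theorem \ref{freeper} as stated. This is immediate once one observes that $N \twoheadrightarrow \ol{R}$ factors a surjection $A^{\oplus n} \twoheadrightarrow N$ whose composition with the quotient is an isomorphism. After that, the remainder of the argument is the clean bookkeeping of thresholds sketched above.
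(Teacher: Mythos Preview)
Your proposal is correct and matches the paper's approach essentially line for line: the paper's proof (given in the paragraph immediately preceding the theorem) uses exactly the same combination of Lemma \ref{mgenslemma}, Theorem \ref{freeper}, and Lemma \ref{naklemma}, with the same choice of $r_1,\dots,r_n$ lifting an $A$-basis of $\ol{R}$. One minor remark: your flagged ``subtlety'' that $N$ itself is free of rank $n$ is not actually needed to invoke Theorem \ref{freeper} as stated---that theorem only asks that the \emph{image} of $N$ in $\ol{M}$ be free of rank $n$, and its proof already extracts $N\cap IM=0$ and $N\cong A^{\oplus n}$ as a consequence---but this does no harm.
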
 
\section{Applications to Multiplicity} \label{AppsMult}
\indt  Suppose that $(S, \mf{m}_S)$ is a Noetherian local ring of Krull dimension $a$, $J \subset S$ is an $\mf{m}_S$-primary ideal, and $M$ is a finite $S$-module.   Recall that one way to define the Hilbert-Samuel multiplicity of $M$ with respect to $J$ is as the limit,
\begin{align*}
	e\left(J, \, M\right) := \lim_{n \to \infty} \dfrac{a!}{n^a} \len{S}{\dfrac{M}{J^{n}M}}.
\end{align*}
When $S$ has positive characteristic $p > 0,$ the Hilbert-Kunz multiplicity of an $\mf{m}_S$-primary ideal is given by a similar looking limit,
\begin{align*}
	\ehkideal{J}{S} := \lim_{e \to \infty} \dfrac{1}{p^{ea}} \len{S}{\dfrac{S}{J^{\fbp{p^e}}S}},
\end{align*}
where, here, $J^{\fbp{p^e}} = (x^{p^e} \, | \, x \in J),$ denotes the Frobenius bracket power of $J$.   For details about Hilbert-Samuel multiplicity, the author recommends \cite{HSintegral} and chapter 4 of \cite{bh1998}.  For Hilbert-Kunz multiplicity the treatment in \cite{polstra2018f} and the survey article \cite{huneke2014hilbert} are excellent places to start.
\begin{remark} \label{mixedchar}
The results in section \ref{HSsection} are stated only for equicharacteristc local rings.  While the technical lemmas of section \ref{TechTools} are characteristic independent, the argument presented below utilizes construction \ref{sopDiagram}, which uses the equicharacteristic form of the Cohen structure theorem.  However, when $R$ has mixed characteristic and $I$ satisfies additional assumptions (e.g. if the residue characteristic is a parameter on $R/I$) there is an analogous construction of the desired diagram, $D,$ given a system of parameters on $R/I$.  In this way, theorem \ref{HSper} can be extended to the mixed characteristic case.  
\end{remark}
\subsection{Hilbert-Samuel Multiplicity} \label{HSsection}
\indt We begin with a quick result, which shows that forming a reduction modulo $I$ behaves extremely well in our setting.  \\
	\begin{lemma} \label{reductionsperturb}
	Suppose that $(S, \mf{m}_S)$ is a local Noetherian ring and that $I, J, K \subset S$ are ideals with $K \subset J.$  Suppose that $$K J^k + I = J^{k+1} + I,$$ for some $k \in \mathbb{N}.$  Then, for all $\unideal{f} = (f_1, \dots, f_c) = I,$ and all $\epsilon_1, \dots, \epsilon_c \in \mf{m}_S\left(I + K^{k+1}\right),$ there is an equality $$K J^{k} + \unideal{f+\epsilon} = J^{k+1} + \unideal{f+\epsilon}.$$ 
	\end{lemma}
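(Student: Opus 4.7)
The plan is to derive the desired equality as a chain, combining the given hypothesis $KJ^k + I = J^{k+1} + I$ with two applications of Lemma \ref{naklemma} (the Nakayama-type perturbation lemma). Morally, the identity $KJ^k + I = J^{k+1} + I$ is a statement modulo $I$, and any perturbation $\unideal{f+\epsilon}$ small enough to agree with $I$ modulo both $KJ^k$ and $J^{k+1}$ will inherit the same identity.

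The key observation driving the whole argument is that, since $K \subset J$, we have the containments
\begin{align*}
    K^{k+1} = K \cdot K^k \subset K \cdot J^k = KJ^k \quad \text{and} \quad K^{k+1} \subset J^{k+1}.
\end{align*}
Consequently the standing hypothesis $\epsilon_i \in \mf{m}_S(I + K^{k+1})$ immediately upgrades to both $\epsilon_i \in \mf{m}_S(I + KJ^k)$ and $\epsilon_i \in \mf{m}_S(I + J^{k+1})$. (This is why the statement is phrased with $K^{k+1}$: it was chosen precisely so as to be absorbed into either auxiliary ideal.) These are exactly the hypotheses needed to invoke Lemma \ref{naklemma} with the respective auxiliary ideals.

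With these upgraded containments in hand, the next step is to apply Lemma \ref{naklemma} twice. First, applied with the auxiliary ideal $KJ^k$ playing the role of $J$, it yields
\begin{align*}
    KJ^k + I = KJ^k + \unideal{f+\epsilon}.
\end{align*}
Second, applied with $J^{k+1}$ playing the role of $J$, it yields
\begin{align*}
    J^{k+1} + I = J^{k+1} + \unideal{f+\epsilon}.
\end{align*}
Stringing these together with the given hypothesis produces
\begin{align*}
    KJ^k + \unideal{f+\epsilon} = KJ^k + I = J^{k+1} + I = J^{k+1} + \unideal{f+\epsilon},
\end{align*}
which is exactly the desired equality.

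There is no real obstacle here --- once the two containments $K^{k+1} \subset KJ^k$ and $K^{k+1} \subset J^{k+1}$ are noted, the proof is a purely formal chain of three equalities. The content of the lemma is really just the observation that Lemma \ref{naklemma} compounds well: a reduction-type relation of the form $KJ^k + I = J^{k+1} + I$ is stable under any perturbation whose error terms lie in $\mf{m}_S$ times an ideal sitting inside both sides of the relation.
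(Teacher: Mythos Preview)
Your proof is correct and follows essentially the same approach as the paper's: both use the containments $K^{k+1} \subset KJ^k$ and $K^{k+1} \subset J^{k+1}$ together with Lemma~\ref{naklemma} to replace $I$ by $\unideal{f+\epsilon}$ on each side of the hypothesis $KJ^k + I = J^{k+1} + I$. The only cosmetic difference is that the paper writes the chain by explicitly adding $K^{k+1}$ to each side before swapping, whereas you first upgrade the hypothesis on the $\epsilon_i$ and then invoke Lemma~\ref{naklemma} directly with the larger auxiliary ideals.
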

	\begin{proof}
Indeed, $K \subset J$ so the following ideals are equal by lemma \ref{naklemma}:
\begin{align*}
	KJ^k + I = KJ^k + I + K^{k+1} = KJ^k + \unideal{f+\epsilon} + K^{k+1} = KJ^k + \unideal{f+\epsilon}
\end{align*}
and
\begin{align*}
	J^{k+1} + I = J^{k+1} + I + K^{k+1} = J^{k+1} + \unideal{f+\epsilon} + K^{k+1} = J^{k+1} + \unideal{f+\epsilon} \, .
\end{align*} 
	\end{proof} 	
\begin{obs} \label{redinclusion} Notice that, if we additionally assume $I \subset J$ in lemma \ref{reductionsperturb}, the proof automatically gives $$\unideal{f+\epsilon} \subset \unideal{f+\epsilon} + J^{k+1} = I + J^{k+1} \subset J.$$ This observation makes an appearance in the proof of theorem \ref{HSper}. 
\end{obs} \\
\indt  The next result is well known, but we have stated it in a convenient form. 
\begin{lemma} \label{minredHS}
	Suppose that $(S, \mf{m}_S, \kappa)$ is an equicharacteristic complete Notherian local ring of dimension $d$, with infinite residue field, and let $J \subset S$ be an $\mf{m}_S$-primary ideal.  Then there is a system of parameters, $x_1, \dots, x_d \in J$, for $S$ such that \\
	\begin{itemize}
		\item[(i)] $\eta = (x_1, \dots, x_d)S$ is a reduction of $J$;
		\item[(ii)] $S$ is module finite over the regular local ring $A :=\kappa [[x_1, \dots, x_d]]$ and for every finite $S$-module $V$,
		\begin{align*}
			e_S\left(J, V\right) = e_A \left(V\right) = \rank{A}{V} \, .
		\end{align*}
	\end{itemize}
\end{lemma}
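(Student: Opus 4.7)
The plan is to produce the desired system of parameters via a minimal reduction of $J$, and then verify the multiplicity identity in (ii) as a concatenation of standard facts.

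First, exploiting the infinite residue field: since $J$ is $\mf{m}_S$-primary in the $d$-dimensional local ring $S$, its analytic spread equals $d$, so there exist elements $x_1, \dots, x_d \in J$ such that $\eta := (x_1, \dots, x_d)S$ is a minimal reduction of $J$ (a standard prime-avoidance / Nakayama argument, cf.\ \cite{HSintegral}). Because $\eta$ is a reduction of an $\mf{m}_S$-primary ideal, $\eta$ is itself $\mf{m}_S$-primary, and so $x_1, \dots, x_d$ is a system of parameters for $S$; this settles (i) and produces the system of parameters needed for (ii).

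Next, I would invoke the equicharacteristic Cohen structure theorem exactly as in construction \ref{sopDiagram}. Fixing a coefficient field $\kappa \hookrightarrow S$, the system of parameters $x_1, \dots, x_d$ is analytically independent over $\kappa$, so the $\kappa$-algebra map $T_i \mapsto x_i$ extends, via completeness of $S$, to an injection $A := \kappa[[T_1, \dots, T_d]] \hookrightarrow S$ making $S$ a finite $A$-module. Crucially, $A/\mf{m}_A = \kappa = S/\mf{m}_S$, so the residue-field extension along $A \hookrightarrow S$ is trivial.

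For the multiplicity identity in (ii), I would chain three classical results. First, since $\eta$ is a reduction of $J$, $e_S(J, V) = e_S(\eta, V)$ for every finite $S$-module $V$. Second, because $A \hookrightarrow S$ is module finite with trivial residue-field extension, $\len{S}{V/\mf{m}_A^n V} = \len{A}{V/\mf{m}_A^n V}$ for all $n$, and hence $e_S(\eta, V) = e_S(\mf{m}_A S, V) = e_A(\mf{m}_A, V)$. Third, $A$ is a regular local domain with $\mf{m}_A = (x_1, \dots, x_d)$, so $e_A(\mf{m}_A, A) = 1$, and the associativity formula for multiplicities collapses --- the only prime $\mf{p} \subset A$ with $\dim A/\mf{p} = d$ is $(0)$ --- yielding
\begin{align*}
    e_A(\mf{m}_A, V) \; = \; \len{A_{(0)}}{V_{(0)}} \cdot e_A(\mf{m}_A, A) \; = \; \rank{A}{V},
\end{align*}
with both sides equal to zero when $\dim_A V < d$.

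No step is a genuine obstacle; each is classical. The only subtlety is ensuring the residue-field extension along $A \hookrightarrow S$ is trivial, so that no factor $[S/\mf{m}_S : A/\mf{m}_A]$ appears in the length comparison between $A$- and $S$-modules. This is automatic here because $\kappa$ simultaneously serves as a coefficient field for both rings by construction.
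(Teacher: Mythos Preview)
Your proposal is correct and follows essentially the same approach as the paper: minimal reductions exist over infinite residue fields, the Cohen structure theorem produces the module-finite extension $A \hookrightarrow S$, and the multiplicity identity is assembled from the invariance of multiplicity under reductions together with the associativity formula over the regular domain $A$. The paper's proof is little more than a list of citations (Matsumura 14.4, 14.8, 14.13, 14.14, section 29, and \cite{HSintegral} 8.3.7, 8.3.9, 11.2.1) for exactly these facts, so your write-up is simply a more explicit version of the same argument.
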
	
\begin{proof}
	The final equality, $e_A(V) = \rank{A}{V},$ holds since $A$ is a regular local domain, so
\begin{align*}
	e_A \left(V\right) = \rank{A}{V} e(A) = \rank{A}{V}
\end{align*}
(see 14.4 and 14.8 of \cite{matsumura1989commutative}). \\
	For the remainder, see theorems 14.13 and 14.14 of \cite{matsumura1989commutative}, or, alternatively, 8.3.7, 8.3.9 and 11.2.1 of \cite{HSintegral} for the results about multiplicity, and section 29 of \cite{matsumura1989commutative} for a discussion of the Cohen structure theorems for complete local rings.
\end{proof}	
\indt Lemma \ref{minredHS} only works when the residue field of $R$ is infinite, but this is easily circumvented. Making use of a standard construction, we are able to reduce our general setup to the case where the conditions of lemma \ref{minredHS} are satisfied.  
\begin{lemma} \label{infres}
Suppose $(S, \mf{m}_S, \kappa)$ is a local Noetherian ring, and let $Y$ be an indeterminate.  Define
	\begin{align*}
		S(Y) := S[Y]_{\mf{m}_S S[Y]}
	\end{align*} 
	and let $\widehat{S(Y)}$ denote the completion of $S(Y)$ at the maximal ideal. \\
	\indt Then, the extension $S \to \widehat{S(Y)}$ is faithfully flat, and
\begin{itemize}
	\item[(i)] the residue field of $\widehat{S(Y)}$ is infinite;
	\item[(ii)] if $V$ is a finitely generated $S$-module and $L \subset S$ is an $\mf{m}_S$-primary ideal, then
	\begin{align*}
		\dim_{S} V = \dim_{\widehat{S(Y)}} V \otimes_S \widehat{S(Y)}
	\end{align*}	
	and, further, 
	\begin{align*}
		\len{S}{V/L V} = \len{\widehat{S(Y)}}{\dfrac{V \otimes_S \widehat{S(Y)}}{L \left(V \otimes_S \widehat{S(Y)}\right)}};
	\end{align*}
	\item[(iii)] If $I = (f_1, \dots, f_c) \subset S$ are minimal generators in $S$, then $I \widehat{S(Y)} = (f_1, \dots, f_c) \widehat{S(Y)}$ are minimal generators in $\widehat{S(Y)}.$ 
\end{itemize}
\end{lemma}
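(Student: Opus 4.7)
The plan is to verify each claim by systematically exploiting that $S \to S(Y) \to \widehat{S(Y)}$ is a composition of faithfully flat extensions in which the maximal ideal is generated, at each stage, by the image of $\mf{m}_S$. First I would establish faithful flatness: $S[Y]$ is a free $S$-module, so $S \to S[Y]$ is faithfully flat; localizing at $\mf{m}_S S[Y]$ gives the flat map $S \to S(Y)$, and since $\mf{m}_S S(Y)$ is the maximal ideal of $S(Y)$ (by the localization description), the map is local and therefore faithfully flat. The map $S(Y) \to \widehat{S(Y)}$ is faithfully flat as the $\mf{m}_{S(Y)}$-adic completion of a Noetherian local ring.

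Next I would identify residue fields. The maximal ideal of $S[Y]_{\mf{m}_S S[Y]}$ is $\mf{m}_S S(Y)$, so $S(Y)/\mf{m}_S S(Y)$ is obtained from $\kappa[Y] = S[Y]/\mf{m}_S S[Y]$ by localizing at its zero ideal, giving $\kappa(Y) = \mathrm{Frac}(\kappa[Y])$. Since completion preserves the residue field, $\widehat{S(Y)}/\mf{m}_{\widehat{S(Y)}} \cong \kappa(Y)$, which is infinite even when $\kappa$ is finite, proving (i). This also records the key fact $\mf{m}_{\widehat{S(Y)}} = \mf{m}_S \widehat{S(Y)}$, so the closed fiber of $S \to \widehat{S(Y)}$ is the field $\kappa(Y)$, hence zero-dimensional of length one.

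For (ii), the dimension equality is the standard formula for faithfully flat local extensions: $\dim_{\widehat{S(Y)}}(V \otimes_S \widehat{S(Y)}) = \dim_S V + \dim \widehat{S(Y)}/\mf{m}_S \widehat{S(Y)} = \dim_S V$ (see, e.g., Theorem 15.1 of \cite{matsumura1989commutative}). For the length equality, I would use the isomorphism $(V \otimes_S \widehat{S(Y)})/L(V \otimes_S \widehat{S(Y)}) \cong (V/LV) \otimes_S \widehat{S(Y)}$ coming from right-exactness of tensor, and then invoke the general fact that for a flat local extension $S \to T$ and a finite-length $S$-module $W$, one has $\len_T(W \otimes_S T) = \len_S(W) \cdot \len_T(T/\mf{m}_S T)$; here $T/\mf{m}_S T = \kappa(Y)$, whose $T$-length is $1$. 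One sees this, e.g., by devissage on a composition series of $W$ whose quotients are each copies of $\kappa$, each contributing length exactly $\len_T(T/\mf{m}_S T)$.

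Finally for (iii), flatness gives $I \widehat{S(Y)} \cong I \otimes_S \widehat{S(Y)}$, hence
\begin{align*}
I\widehat{S(Y)} \big/ \mf{m}_{\widehat{S(Y)}} I \widehat{S(Y)} \cong (I/\mf{m}_S I) \otimes_\kappa \kappa(Y),
\end{align*}
which has $\kappa(Y)$-dimension equal to $\dim_\kappa I/\mf{m}_S I = c$, and the images of the $f_i$ form a $\kappa(Y)$-basis because they form a $\kappa$-basis of $I/\mf{m}_S I$. By Nakayama, $f_1, \dots, f_c$ minimally generate $I\widehat{S(Y)}$. None of these steps should pose serious difficulty; the only point requiring any care is the bookkeeping around the closed fiber being a field, but once $\mf{m}_{\widehat{S(Y)}} = \mf{m}_S \widehat{S(Y)}$ is in hand, everything else is formal.
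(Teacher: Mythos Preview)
Your proposal is correct and follows essentially the same approach as the paper. The paper's own proof is in fact much terser than yours: it simply observes that $S \to \widehat{S(Y)}$ is a composition of faithfully flat extensions, identifies the residue field as the fraction field of $\kappa[Y]$, and then defers entirely to standard references (section 8.4 of \cite{HSintegral} and pp.~114 and section 8 of \cite{matsumura1989commutative}) for everything else---so you have effectively written out the details that those references would supply.
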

\begin{proof}
	The extension $S \xhookrightarrow{} \widehat{S(Y)}$ is the composition of faithfully flat extensions. Further, the residue field of $\widehat{S(Y)}$ is isomorphic to the fraction field of $\kappa[Y]$, which is infinite. \\
	 The properties of the extension $S \to S(Y)$ are discussed in section 8.4 of \cite{HSintegral} and  on pg. 114 of \cite{matsumura1989commutative}. See section 8 of \cite{matsumura1989commutative}  for details about completion in Noetherian local rings.
\end{proof}
\indt We are ready to state the main result of the section.
\begin{thm} \label{HSper}
	Let $I$ be an ideal in an equicharacteristic Noetherian local ring, $(R, \mf{m}_R),$ and suppose that $M$ is a finite $R$-module.  If $\dim R/I \ge 1$ assume additionally that 
\begin{align*}
	\dim_R \hgy{1}{I; M} = \dim \dfrac{R}{\ann{R}{\hgy{1}{I; M}}} < \dim R/I \, .
\end{align*}
Then, for any $\mf{m}_R$-primary ideal $J \supset I,$ there is a $T \in \mathbb{N}$ such that, for all minimal generators $(f_1, \dots, f_c) = I$ and all $\epsilon_1, \dots, \epsilon_c \in \mf{m}_R^T$, 
	\begin{align*}
		e\left(\, J\ol{R}, \, \ol{M}\right) = e\left(\, J\olper{R}, \,  \olper{M}\right) \, .
	\end{align*}	
	\end{thm}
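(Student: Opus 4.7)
The plan is to reduce to the case where $R$ is complete with infinite residue field, apply Construction \ref{sopDiagram} to insert a regular local ring $A$ whose regular parameters come from a minimal reduction of $J$ in $\ol{R}$, and then compute both multiplicities as $A$-ranks via Lemma \ref{minredHS}, using Corollary \ref{rankper} to equate those ranks. The case $\dim \ol{R} = 0$ is immediate from Corollary \ref{nakcordim0}: for $T \gg 0$ we have $\ol{M} = \olper{M}$ and in particular $\ol{R} = \olper{R}$, so the two multiplicities literally coincide. Assume therefore $a := \dim \ol{R} \ge 1$.

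First, following Lemma \ref{infres}, I would replace $R$ by $R' := \widehat{R(Y)}$. Since $R \to R'$ is faithfully flat, minimal generators of $I$ remain minimal, $\mf{m}_R^T$ expands into $\mf{m}_{R'}^T$, the Hilbert--Samuel multiplicities in question are preserved, and the hypothesis $\dim_R \hgy{1}{I; M} < a$ transfers (Koszul homology commutes with flat base change, and annihilator ideals are preserved by faithfully flat local extensions). So it suffices to prove the theorem when $R$ is complete with infinite residue field $\kappa$.

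In this setting, choose a minimal reduction $(x_1, \dots, x_a) \subseteq J$ of $J\ol{R}$ (this exists because $\kappa$ is infinite, by Lemma \ref{minredHS}), lift the $x_i$ to $R$ while keeping them in $J$, and form $A = \kappa[[x_1, \dots, x_a]]$ via Construction \ref{sopDiagram}. This produces the diagram $D$ with $A \hookrightarrow \ol{R}$ module finite, and Lemma \ref{dimH1} supplies a nonzero $d \in A$ (automatically a nzd, since $A$ is a domain) with $d\hgy{1}{I; M} = 0$. By Lemma \ref{minredHS}(ii), $e(J\ol{R}, \ol{M}) = \rank{A}{\ol{M}}$. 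For the perturbation side, Theorem \ref{freeper} applied to $A \cdot 1 \subseteq R$ (as in the remark following that theorem) shows that $A \hookrightarrow \olper{R}$ is module finite for $T \gg 0$. Choosing $k$ with $(x_1, \dots, x_a)J^k + I = J^{k+1} + I$ in $R$ and invoking Lemma \ref{reductionsperturb} then yields $(x_1, \dots, x_a)J^k + \unideal{f + \epsilon} = J^{k+1} + \unideal{f + \epsilon}$ for $T$ large enough, so $(x_1, \dots, x_a)\olper{R}$ remains a reduction of $J\olper{R}$. A second application of Lemma \ref{minredHS}(ii), now inside $\olper{R}$, gives $e(J\olper{R}, \olper{M}) = \rank{A}{\olper{M}}$, and Corollary \ref{rankper} equates these two ranks once $T$ is taken sufficiently large.

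The heavy lifting has already been done in the preceding sections, so the remaining obstacle is bookkeeping: verifying that the hypothesis on $\dim_R \hgy{1}{I; M}$ transfers cleanly across the faithfully flat extension in the first step, and consolidating the distinct lower bounds on $T$ produced by Corollary \ref{rankper}, Theorem \ref{freeper}, and Lemma \ref{reductionsperturb} into a single $T$ that works uniformly over all minimal generating sets $(f_1, \dots, f_c) = I$ and all $\epsilon_i \in \mf{m}_R^T$.
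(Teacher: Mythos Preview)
Your approach is essentially identical to the paper's: reduce to complete with infinite residue field via Lemma~\ref{infres}, build $A$ from a minimal reduction of $J\ol{R}$ via Construction~\ref{sopDiagram} and Lemma~\ref{minredHS}, use Lemma~\ref{reductionsperturb} to keep the reduction after perturbing, and finish with Corollary~\ref{rankper}.

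One small correction: your appeal to Theorem~\ref{freeper} on $A\cdot 1 \subset R$ is not justified here, since that theorem requires $d\,\hgy{1}{I;R}=0$, whereas the hypothesis of Theorem~\ref{HSper} only controls $\hgy{1}{I;M}$. Fortunately this step is unnecessary --- the paper does not establish injectivity of $A \to \olper{R}$ at all. Module-finiteness of $\olper{R}$ over $A$ already follows from Lemma~\ref{mgenslemma}, and the paper then cites the Matsumura results underlying Lemma~\ref{minredHS} directly (reduction preserves multiplicity; $e_A(V)=\rank{A}{V}$ over a regular local ring) to get $e(J\olper{R},\olper{M}) = \rank{A}{\olper{M}}$. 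Simply drop the Theorem~\ref{freeper} sentence and your argument matches the paper's.
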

	\begin{proof}
	If $I$ happens to be $\mf{m}_R$-primary, then the stronger conclusion of corollary \ref{nakcordim0} holds. \\
\indt Otherwise, $\ol{R}$ has dimension at least $1.$  If $R$ does not have infinite residue field, or is not complete, then we apply the construction in lemma \ref{infres}.  The map $R \to \widehat{R(Y)}$ is faithfully flat and, by (iii) of lemma \ref{infres}, minimal generators for $I$ in $R$ map to minimal generators for $I\widehat{R(Y)}$ in $\widehat{R(Y)},$ so there is an isomorphism
\begin{align*}
	\hgy{1}{I; M} \otimes_R \widehat{R(Y)} \cong \hgy{1}{I\widehat{R(Y)}; M \otimes_R \widehat{R(Y)}}.
\end{align*}
So, (ii) of lemma \ref{infres} gives, 
\begin{align*}
	\dim_{\widehat{R(Y)}} \hgy{1}{I\widehat{R(Y)}; M \otimes_R \widehat{R(Y)}} = \dim_R \hgy{1}{I ; M} < \dim R/I = \dim \widehat{R(Y)}/I\widehat{R(Y)}.
\end{align*}
Thus, all of the conditions in the statement of theorem \ref{HSper} now hold over $\widehat{R(Y)}$ with $I$ replaced by $I\widehat{R(Y)}$ and $M$ replaced by $M \otimes_R \widehat{R(Y)}.$  Furthermore, by the second equality in (ii) of lemma \ref{infres} the multiplicities in question, $e\left(\, J\ol{R}, \, \ol{M}\right)$ and $e\left(\, J\olper{R}, \,  \olper{M}\right),$ do not change upon passing to $\widehat{R(Y)}$.  \\
\indt We have reduced to the case where $\dim \ol{R} \ge 1,$ and $R$ (and thus $\ol{R}$) is complete with infinite residue field.  By lemma \ref{minredHS} we may choose parameters, $x_1, \dots, x_a$ on $\ol{R}$ that generate a minimal reduction of $J\ol{R} = J/I.$  These parameters induce a commuting diagram of the form $D$, as in construction \ref{sopDiagram}, with $A = \left(R/\mf{m}\right)[[x_1, \dots, x_a]].$  By (ii) of lemma \ref{minredHS}, we have 
\begin{align*}
	e\left( \, J\ol{R}, \, \ol{M}\right) = \rank{A}{\ol{M}} \, .
\end{align*}
\indt We will abuse notation slightly, and use the same symbols $x_1, \dots, x_a$ to denote lifts of these parameters to $R$. In accordance with lemma \ref{reductionsperturb}, choose $T$ large enough so that the images of $x_1, \dots, x_a$ continue to generate a reduction of $J\olper{R}$ in $\olper{R}$ --- note that, for such $T$, $\unideal{f+\epsilon} \subset (x_1, \dots, x_a)R + I \subset J,$ as pointed out in observation \ref{redinclusion}.  We have,
\begin{align*}
	e \left(\, J\olper{R}, \, \olper{M} \right) = e_A \left( \olper{M} \right) = \rank{A}{\olper{M}}
\end{align*}
where the first equality holds by theorem 14.13 of \cite{matsumura1989commutative}, because $\mf{m}_A \olper{R}$ is a reduction of $J\olper{R}$; and the second equality is a consequence of the fact that $A$ is a regular local domain, as in the proof of lemma \ref{minredHS} (by 14.8 of \cite{matsumura1989commutative}). \\
\indt Now, by assumption, 
\begin{align*}
	\dim \hgy{1}{I; M} < \dim R/I,
\end{align*}
and so, by lemma \ref{dimH1} there must be a nzd $d \in A$ such that $d\hgy{1}{I; M} = 0.$  In particular, the conditions of corollary \ref{rankper} are satisfied and therefore, for sufficiently large $T$, we have an equality
\begin{align*}
	e \left(\, J\olper{R}, \, \olper{M} \right) = \rank{A}{\olper{M}} = \rank{A}{\ol{M}} = e\left( \, J\ol{R}, \, \ol{M}\right) \, .
\end{align*}
	\end{proof}
\subsection{Hilbert-Kunz Multiplicity}
\indt  Our next goal is to prove a more general form of the following result  (see theorem 4.3 of \cite{polstra_smirnov_2018} and corollary 3.7 of \cite{AIstab}):
\begin{thm} \label{IllyaThomasHK} \textbf{(Polstra and Smirnov)}  Let $(R, \mf{m}_R)$ be a d-dimensional $F$-finite Cohen-Macaulay local ring of prime characteristic $p>0,$ and let $I$ be an ideal generated by $c>0$ parameters.  Assume that $\widehat{R}/I\widehat{R}$ is reduced.  \\
\indt Then, for any $\mf{m}_R$-primary ideal $J \supset I$, and any $\delta > 0,$ there exists a $T \in \mathbb{N}$ such that for any minimal generators, $(f_1, \dots, f_c) = I,$ and any $\epsilon_1, \dots, \epsilon_c \in \mf{m}_R^T$,
\begin{align*}
	\left| \ehk{J\ol{R}} - \ehk{J\olper{R}} \right| < \delta \, .
\end{align*}
\end{thm}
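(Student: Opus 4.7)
The plan is to reduce to the complete case with infinite residue field, install the diagram~$D$ of Construction~\ref{sopDiagram}, transport the structural features of $A\hookrightarrow\ol R$ to all small perturbations using the tools assembled elsewhere in the paper, and then run the Polstra--Smirnov trace argument for Hilbert--Kunz continuity simultaneously over all such perturbations.

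First, Lemma~\ref{infres} lets us faithfully flatly replace $R$ by $\widehat{R(Y)}$ without changing the hypotheses or either of the two Hilbert--Kunz numbers under comparison, so assume $R$ is complete with infinite residue field. If $a:=\dim\ol R=0$, $I$ is $\mf{m}_R$-primary and Corollary~\ref{nakcordim0} gives a much stronger conclusion, so assume $a\ge 1$. Use Lemma~\ref{minredHS} to choose a system of parameters $x_1,\dots,x_a\in J\ol R$ generating a minimal reduction of $J\ol R$, and apply Construction~\ref{sopDiagram} to produce $A=\kappa[[x_1,\dots,x_a]]$ fitting into diagram~$D$. Miracle flatness makes $A\hookrightarrow\ol R$ free of some rank~$n$, and the assumed reducedness of $\widehat R/I\widehat R$ together with normality of $A$ makes the extension generically \'etale. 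Because $I$ is generated by the $R$-regular sequence $f_1,\dots,f_c$, we have $\hgy{1}{I;R}=0$, so condition~$(*)$ in the sense of Definition~\ref{condition} is automatic with $d=1$, and every result in Sections~\ref{SectionBasic}--\ref{TechTools} that invokes $(*)$ applies.

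Second, transport the structural picture to every sufficiently small perturbation: by Theorem~\ref{flatfiberper} and Proposition~\ref{genetaleper}, there is a $T_1$ such that for all $\epsilon_i\in\mf{m}_R^{T_1}$, the induced map $A\hookrightarrow\olper R$ is a finite flat (hence free of rank~$n$) generically \'etale extension with the same closed fiber as $A\hookrightarrow\ol R$. By Lemma~\ref{traceper} and Corollary~\ref{Discper}, for any prescribed $Q\in\mathbb N$ we may enlarge $T_1$ to $T_Q$ so that for all $\epsilon_i\in\mf{m}_R^{T_Q}$ and all $r\in R$,
\[
\trc{\ol R}{A}{r}-\trc{\olper R}{A}{r}\in\mf{m}_A^Q,\qquad \disc{A}{\ol R}-\disc{A}{\olper R}\in\mf{m}_A^Q.
\]

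Third, run the Polstra--Smirnov Hilbert--Kunz estimate uniformly in $\epsilon$. Using the free $A$-module structure $\olper R\cong A^n$ together with the trace/discriminant, each length $\len{\olper R}{\olper R/J^{\fbp{p^e}}\olper R}$ can be expressed (after dividing by the residue-field extension degree, which is shared across all $\epsilon$) as an $A$-module length assembled from the $A$-valued trace data; the Polstra--Smirnov argument then bounds
\[
\frac{1}{p^{ea}}\Bigl|\len{\olper R}{\olper R/J^{\fbp{p^e}}\olper R}-p^{ea}\ehk{J\olper R}\Bigr|\le C/p^e
\]
by a single constant $C$ valid for all sufficiently small $\epsilon$ ($C$ depends only on $n$, the common closed fiber, and the discriminant up to a unit --- effectively the same for $\ol R$ and $\olper R$ by the closeness of discriminants above). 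Pick $e$ so that this HK approximation is within $\delta/3$ on both sides, then pick $Q$ and hence $T=T_Q$ so that the trace-controlled difference $\frac{1}{p^{ea}}\bigl|\len{\ol R}{\ol R/J^{\fbp{p^e}}\ol R}-\len{\olper R}{\olper R/J^{\fbp{p^e}}\olper R}\bigr|$ is at most $\delta/3$; the triangle inequality then gives $|\ehk{J\ol R}-\ehk{J\olper R}|<\delta$.

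The hardest step is packaging the Polstra--Smirnov length estimate so that its dependence on $\epsilon$ is controlled purely by the $\mf{m}_A$-adic distance between the trace maps of $\ol R$ and $\olper R$. Concretely, for each fixed $e$ one must show that $\len{\olper R}{\olper R/J^{\fbp{p^e}}\olper R}$, viewed as a function of the trace data (assembled via the free basis transferred through Lemma~\ref{mgenslemma} and Theorem~\ref{freeper}), is continuous in that data with an explicit modulus. The discriminant controls the invertibility needed to translate between $A^n$-length and $\olper R$-length computations, and the generic \'etaleness from Proposition~\ref{genetaleper} keeps the discriminant a non-zerodivisor in $A$. Once the uniform estimate is in hand, the rest is routine bookkeeping with the HK limit.
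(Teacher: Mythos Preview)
This theorem is quoted in the paper from \cite{polstra_smirnov_2018} without an independent proof; the paper's own contribution is the more general Theorem~\ref{HKThm}, whose argument (Proposition~\ref{UniformBounds}, Corollary~\ref{HkGrowth}, then Lemma~\ref{naklemma}) specializes to yield this statement. Two points in your outline diverge from that route in ways that create real gaps.

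First, your claim that reducedness of $\ol R$ together with normality of $A$ forces $A\hookrightarrow\ol R$ to be generically \'etale is false in positive characteristic. Take $R=\kappa[[u,t]]$ with $\kappa$ an infinite field of characteristic $p$ and $I=(u^p-t^{p+1})$: then $\ol R$ is a reduced one-dimensional Cohen--Macaulay domain, $(t)$ is a minimal reduction of $\mf m_{\ol R}$, yet $\kappa[[t]]\hookrightarrow\ol R$ is purely inseparable at the generic point. The paper secures generically \'etale parameters via the Cohen--Gabber theorem (Lemma~\ref{CGlemma}), not via Lemma~\ref{minredHS}; the reduction-of-$J$ device belongs to the Hilbert--Samuel argument (Theorem~\ref{HSper}) and plays no role in the Hilbert--Kunz proof. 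Second, your ``hardest step'' --- comparing the finite-level lengths $\len{R}{\ol R/J^{\fbp{p^e}}\ol R}$ and $\len{R}{\olper R/J^{\fbp{p^e}}\olper R}$ through continuity in the trace data --- is both unnecessary and not clearly feasible as written. In the paper, once the uniform convergence bound of Corollary~\ref{HkGrowth} is in hand and $e$ is fixed, one simply enlarges $T$ so that $\unideal{f+\epsilon}+J^{\fbp{p^e}}=I+J^{\fbp{p^e}}$ by Lemma~\ref{naklemma}; the two lengths are then literally equal and the triangle inequality finishes. The trace and discriminant estimates (Lemma~\ref{traceper}, Corollary~\ref{Discper}) enter earlier and for a different purpose: combined with Lemma~\ref{smircor23} they produce a single parameter $g\in A$ and a single constant $C$ bounding the cokernels $Q_1,Q_2$ simultaneously for all small perturbations --- this is the mechanism behind Proposition~\ref{UniformBounds}, which your third paragraph gestures at but does not pin down. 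Your use of miracle flatness and Theorem~\ref{flatfiberper} is a legitimate simplification available in the Cohen--Macaulay case, but it does not repair either of these two issues.
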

\indt The proof in \cite{polstra_smirnov_2018}, makes use of techniques developed in section 6 of Hochster and Hueneke's classic tight closure paper \cite{HH90}.  Our theorem \ref{freeper} pairs very nicely with these methods, and we are able to modify the argument from \cite{polstra_smirnov_2018} so that it works without requiring any strong Cohen-Macaulay-ness assumptions.  To do this we need to apply the 'discriminant technique' of \cite{HH90} outside of it's original scope, and, since it is difficult to find everything needed in a single source, we record some facts and definitions about the trace and discriminant before moving on to the main results of this section.  
\subsubsection{Trace, Discriminant, and Generic \'Etale Algebras} \label{traceSec}
\indt For this discussion, $A \xhookrightarrow{} S$ is a module finite extension of Noetherian local rings, with $A$ a domain.  We will use $K$ to denote the quotient field of $A$.  Recall that $S$ is \textbf{generically \'etale} (or, equivalently, \textbf{generically separable}) over $A$, provided that $S \otimes_A K$ is \'etale as a $K$-algebra, i.e. $S \otimes_A K$ is $K$-algebra isomorphic to a finite product of fields,
\begin{align*}
	S \otimes_A K \cong \prod_{i=1}^n L_i,
\end{align*}    
with each $L_i$ a \textit{separable} extension of $K$.  \\
\indt The kernel of the map $S \to S \otimes_A  K$ is the ideal of $A$-torsion elements
\begin{align*}
	T_A(S) := \{r \in S \, | \, ar = 0, \text{ for some } 0 \neq a \in A \},
\end{align*}  
$S':=S/T_A(S)$ is torsion-free as an $A$-module, and there is a commutative diagram of $A$-algebras,
$$ C:
	\begin{tikzcd}    
			 S \arrow[r] \arrow[d] & S' \arrow[d] \\
		 	 S \otimes_A K \arrow[r, "\sim"]	 & S' \otimes_A K
 	\end{tikzcd}
$$
where the bottom map is a $K$-algebra isomorphism
\begin{align*}
	S \otimes_A K \cong S' \otimes_A K.
\end{align*}
Notice, in particular, that $S$ is generically \'etale over $A$ if and only if $S'$ is. \\
\indt Now, $S \otimes_A K$ is a finite $K$-vector space, and given any $x \in S \otimes_A K,$ multiplication by $x$ is a $K$-linear endomorphism
\begin{align*}
	 S \otimes_A K \to S \otimes_A K.
\end{align*}
Denote the trace of this map by $\trc{S \otimes_A K}{K}{x}.$  This trace is $K$-linear in $x$, and therefore we have defined a $K$-linear map
\begin{align*}
	\mathrm{trace}_{S\otimes_A K/K} : S \otimes_A K \to K.
\end{align*}
This, in turn, determines a $K$-bilinear form on the algebra $S \otimes_A K$ given by
\begin{align*}
	\tray{x}{y} := \trc{S \otimes_A K}{K}{xy}.
\end{align*}
\begin{prop} \label{nondegetale}
  $S \otimes_A K$ is an \'etale $K$-algebra if and only if $\mathrm{Tr}$ is non-degenerate.
\end{prop}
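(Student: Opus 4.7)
The plan is to reduce the statement to a purely linear-algebraic fact about finite-dimensional commutative $K$-algebras, and then appeal to the classical characterization of separable field extensions via the trace form. Set $B := S \otimes_A K$; both the property of being étale and the non-degeneracy of $\mathrm{Tr}$ depend only on the $K$-algebra structure of $B$, so the statement I want to prove becomes: a finite-dimensional commutative $K$-algebra $B$ is étale if and only if its trace pairing $\mathrm{Tr}(x,y) = \trc{B}{K}{xy}$ is non-degenerate.

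For the forward direction, I would write $B \cong \prod_{i=1}^n L_i$ with each $L_i/K$ a finite separable field extension. A routine computation shows that the trace form on a product of $K$-algebras decomposes as the orthogonal direct sum of the trace forms on the factors, and the classical fact that the trace form of a finite separable field extension is non-degenerate then gives the conclusion, since non-degeneracy is preserved under orthogonal direct sums.

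For the converse, the central observation is that every nilpotent element of $B$ lies in the radical of $\mathrm{Tr}$: if $x \in B$ is nilpotent then, since $B$ is commutative, $xy$ is nilpotent for every $y \in B$, and multiplication by a nilpotent element is a nilpotent $K$-endomorphism of $B$, hence has trace zero. Consequently, non-degeneracy of $\mathrm{Tr}$ forces the nilradical of $B$ to vanish. A reduced finite-dimensional commutative $K$-algebra is Artinian with trivial Jacobson radical, hence (by CRT) a finite product of finite field extensions, $B \cong \prod L_i$ of $K$. Decomposing the trace form orthogonally along this product as before shows each $L_i$ has non-degenerate trace form, and invoking the classical equivalence once more gives that each $L_i/K$ is separable, i.e., $B$ is étale.

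The main obstacle, such as it is, is simply to cite or recall the classical equivalence \emph{a finite field extension $L/K$ is separable if and only if $\mathrm{trace}_{L/K}$ is non-degenerate}; everything else in the argument is either trivial linear algebra (nilpotent endomorphisms have trace zero, bilinear forms split orthogonally along algebra products) or the standard structure theorem for reduced Artinian rings. I would likely just give a reference for the separability equivalence rather than reproving it.
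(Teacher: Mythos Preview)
Your proposal is correct and follows essentially the same route as the paper's proof: both directions hinge on the orthogonal decomposition of the trace form along a product of fields together with the classical equivalence between separability of a finite field extension and non-degeneracy of its trace form, and the converse uses the observation that nilpotents lie in the radical of $\mathrm{Tr}$ to force $S\otimes_A K$ to be reduced (hence a product of fields). The paper additionally routes the reducedness conclusion through $S' = S/T_A(S)$, but this is a cosmetic difference; your argument working directly with $B = S\otimes_A K$ is entirely equivalent.
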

\begin{proof}
 If $S \otimes_A K \cong \prod_{i=1}^n L_i$ is isomorphic to a finite product of finite separable extensions of $K,$ the trace form is non-degenerate since its restrictions to each $L_i$ are non-degenerate.  For the other direction, note that the trace of a nilpotent element vanishes, and so if $\mathrm{Tr}$ is non-degenerate then $S \otimes_A K$ is reduced.  This implies that $S/T_A(S) = S'$ is reduced, and therefore $S \otimes_A K \cong S' \otimes_A K$ is a finite product of finite field extensions of $K$.  Now the result follows from the linearity of the trace and the fact that a finite field extension $L/K$ is separable if and only the trace form of $L$ over $K$ is non-degenerate. \\  
\indt See the proof of the lemma in \cite[\href{https://stacks.math.columbia.edu/tag/0BVH}{Section 0BVH}]{stacks-project}. 
\end{proof}
\indt Given a basis $\ul{b} = b_1, \dots, b_n,$ for $S \otimes_A K$ as a $K$ vector space, the \textbf{discriminant} of $\mathrm{Tr}$ with respect to $\ul{b}$ is defined to be
\begin{align*}
	\mc{D}\left(\mathrm{Tr}, \ul{b}\right) := \det \left( \tray{b_i}{b_j} \right).
\end{align*}
If $\ul{b}' = b_1', \dots, b_n'$ is another basis for $S \otimes_A K,$ and $A$ denotes the matrix transforming from $\ul{b}$ to $\ul{b}'$, a direct calculation shows that
\begin{align*}
	\mc{D}\left(\mathrm{Tr}, \ul{b}'\right) = (\det A)^2 \mc{D}\left(\mathrm{Tr}, \ul{b}\right) \, .
\end{align*}
From this identity it is clear that the vanishing or non-vanishing of the discriminant of $\mathrm{Tr}$ does not depend on the choice of basis, and it is straightforward to show that a bilinear form is non-degenerate if and only if its discriminants are nonzero (see e.g. the discusion on the bottom of pg. 232 of \cite{jacobsonbasic2}).  Therefore, proposition \ref{nondegetale} may be reformulated in terms of the discriminant.
\begin{prop} \label{discetale}
The following are equivalent:
\begin{itemize}
	\item[(i)] $S \otimes_A K$ is an \'etale $K$-algebra;
	\item[(ii)] $\mc{D}\left(\mathrm{Tr}, \ul{b}\right) \neq 0$ for every $K$ basis, $\ul{b}$ of $S \otimes_A K$; 
	\item[(iii)] $\mc{D}\left(\mathrm{Tr}, \ul{b}\right) \neq 0$ for some $K$ basis, $\ul{b},$ of $S \otimes_A K.$
\end{itemize}
\end{prop}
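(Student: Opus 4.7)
The proposition is essentially a translation of Proposition \ref{nondegetale} into the language of discriminants, so the plan is to leverage that result together with standard linear algebra rather than to reprove anything from scratch. The key inputs are: (a) the already-established equivalence between $S \otimes_A K$ being \'etale over $K$ and the trace form $\mathrm{Tr}$ being non-degenerate; (b) the change-of-basis identity $\mc{D}(\mathrm{Tr}, \ul{b}') = (\det P)^2 \mc{D}(\mathrm{Tr}, \ul{b})$ recorded just before the proposition, where $P \in GL_n(K)$ is the transition matrix; and (c) the basic fact that a bilinear form on a finite-dimensional vector space is non-degenerate if and only if the Gram matrix in some (equivalently, every) basis is invertible.

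First I would dispatch the equivalence (ii) $\Longleftrightarrow$ (iii), which is pure linear algebra: (ii) $\Longrightarrow$ (iii) is immediate because $S \otimes_A K$ admits at least one $K$-basis (it is a finite-dimensional $K$-vector space by the assumption that $A \hookrightarrow S$ is module finite). For the converse, given any two $K$-bases $\ul{b}$ and $\ul{b}'$ of $S \otimes_A K$, the change-of-basis matrix $P$ is an element of $GL_n(K)$, so $\det P \in K^\times$ and $(\det P)^2 \neq 0$; the identity in (b) then shows that $\mc{D}(\mathrm{Tr}, \ul{b})$ and $\mc{D}(\mathrm{Tr}, \ul{b}')$ are simultaneously zero or simultaneously nonzero, giving (iii) $\Longrightarrow$ (ii).

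Next I would handle (i) $\Longleftrightarrow$ (iii). By Proposition \ref{nondegetale}, (i) is equivalent to $\mathrm{Tr}$ being a non-degenerate $K$-bilinear form on $S \otimes_A K$. Fixing any $K$-basis $\ul{b} = b_1, \dots, b_n$, the Gram matrix of $\mathrm{Tr}$ with respect to $\ul{b}$ is exactly $(\tray{b_i}{b_j})$, whose determinant is $\mc{D}(\mathrm{Tr}, \ul{b})$. Non-degeneracy of $\mathrm{Tr}$ is equivalent to this Gram matrix being invertible over $K$, i.e., to $\mc{D}(\mathrm{Tr}, \ul{b}) \neq 0$, which is (iii). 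This citation is already made by the author to Jacobson, so no new content is needed.

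There is no serious obstacle here: all the substantive work was done in Proposition \ref{nondegetale} (separability, reducedness of $S \otimes_A K$, and the structure as a product of separable field extensions). The only minor care needed is to observe that the discriminant condition is genuinely basis-independent, which is exactly what the $(\det P)^2$ factor in the change-of-basis formula gives us, and that once basis-independence is established one may freely pass between (ii) and (iii) in applying Proposition \ref{nondegetale}.
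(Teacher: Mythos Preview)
Your proposal is correct and follows essentially the same approach as the paper: the paper does not give a separate proof environment for this proposition but justifies it in the paragraph immediately preceding it, using exactly the change-of-basis identity to get basis-independence of the vanishing of the discriminant, the cited fact (Jacobson) that non-degeneracy of a bilinear form is equivalent to nonvanishing of its discriminant, and then invoking Proposition~\ref{nondegetale}. Your writeup just makes these steps explicit.
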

\indt Composing $\mathrm{trace}_{S \otimes_A K / K}$ with the map $S \to S \otimes_A K$ gives an $A$-linear map $S \to K.$  When $A$ is normal, the image of this composition actually lands in $A$.
\begin{lemma} \label{tracemap}
	Let $A \xhookrightarrow{} S$ be a module finite map of Noetherian local rings, with $A$ a normal domain.  Let $K$ be the quotient field of $A$.  For all $r \in S,$ the trace of the multiplication by $r$ map
\begin{align*}
	S \otimes_A K \xrightarrow{r \cdot } S \otimes_A K
\end{align*}  
belongs to $A$.
\end{lemma}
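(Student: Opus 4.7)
The plan is to reduce to the standard fact that traces of integral elements land in a normally closed base, using the characteristic polynomial rather than trying to compute the trace directly in a basis.

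First I would observe that because $A \hookrightarrow S$ is module finite, every $r \in S$ is integral over $A$, so there exists a monic polynomial $p(X) \in A[X]$ with $p(r) = 0$. Consequently, the $K$-linear endomorphism $\mu_r : S \otimes_A K \to S \otimes_A K$ given by multiplication by $r \otimes 1$ also satisfies $p(\mu_r) = 0$. This means the minimal polynomial $m(X) \in K[X]$ of $\mu_r$ divides $p(X)$ in $K[X]$.

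Next, I would pass to an algebraic closure $\ol{K}$ of $K$ and factor the characteristic polynomial $\chi_{\mu_r}(X) \in K[X]$ as $\prod_i (X - \lambda_i)$ with $\lambda_i \in \ol{K}$. Since $\chi_{\mu_r}$ and $m$ share the same irreducible factors in $K[X]$, every eigenvalue $\lambda_i$ is a root of $m(X)$, hence a root of $p(X) \in A[X]$. Therefore each $\lambda_i$ is integral over $A$.

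The trace of $\mu_r$ equals $\sum_i \lambda_i$, which is a sum of elements integral over $A$ and is therefore itself integral over $A$. On the other hand, $\trc{S \otimes_A K}{K}{r}$ visibly belongs to $K$. Since $A$ is a normal domain with fraction field $K$, an element of $K$ that is integral over $A$ must lie in $A$, which gives the conclusion.

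The only subtle step is the divisibility $m \mid p$ in $K[X]$ and the fact that $\chi_{\mu_r}$ and $m$ share all irreducible factors; both are standard linear-algebra facts, so I do not anticipate a genuine obstacle here. The proof is essentially a short application of the characterization of normal domains as those integrally closed in their fraction field.
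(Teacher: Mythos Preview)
Your argument is correct. The paper's proof takes a slightly different organizational route: it first invokes an external reference (page~200 of Hochster's \emph{Foundations}) to handle the case where $S$ is torsion-free over $A$, and then observes via the commutative diagram~$C$ that the general case reduces to the torsion-free one, since $S \otimes_A K \cong S' \otimes_A K$ as $K$-algebras for $S' = S/T_A(S)$. Your approach is more direct and self-contained: you never need the torsion-free reduction, because the eigenvalue argument works regardless of whether $S$ has $A$-torsion. The content of the cited reference is almost certainly the same characteristic-polynomial/integrality argument you wrote out, so in spirit the two proofs are close; the practical difference is that you spell out the details rather than outsourcing them, and you sidestep the extra reduction step entirely. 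One minor bonus worth noting: your argument actually shows that \emph{all} coefficients of $\chi_{\mu_r}$ lie in $A$, not just the trace, since they are elementary symmetric functions in the integral eigenvalues $\lambda_i$.
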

\begin{proof}
  The argument on page 200 of \cite{Hfoundations} establishes the result when $S$ is torsion-free over $A$.  Referring to diagram C, above, we see that multiplication by any $r \in S$ on $S \otimes_A K$ agrees with multiplication by it's image in $S' = S/T_A(S)$  on $S' \otimes_A K$ --- so the torsion-free case implies the result in general.   
\end{proof}
\indt Therefore, when $A$ is normal, the trace on $S \otimes_A K$ determines an $A$-linear map, $\mathrm{trace}_{S/A}: S \to A,$ called the \textbf{trace} of $S$ over $A.$  Moreover, if $r_1, \dots, r_n$ are elements in $S$ whose images, $r_1 \otimes 1, \dots r_n \otimes 1 = \ul{r\otimes 1}$ constitute a $K$ basis in $S \otimes_A K,$ lemma \ref{tracemap} implies that the discriminant $\mc{D}\left(\mathrm{Tr}, \ul{r \otimes 1}\right)$ is an element of $A$.
\subsubsection{Perturbing the Trace and Discriminant} \label{subSecper}
\indt Returning to the theme of this paper, suppose that $(R, \mf{m}_R)$ and $(A, \mf{m}_A)$ are complete Noetherian local rings, that $I \subset R$ is an ideal and that $R,$ $A$ and $\ol{R} = R/I$ are arranged as in diagram $D$.  In addition, assume that $A$ is a normal domain of positive dimension, and that there is a nonzero element $d \in A$, such that 
\begin{align*}
	d \hgy{1}{I; R} = 0.
\end{align*}
Denote the quotient field of $A$ by $K$. \\
\indt Choose elements, $r_1, \dots, r_n \in R$ that span a free $A$-submodule of maximum rank in $\ol{R},$ and write $$N := Ar_1 + \dots + Ar_n.$$  Note that $\ol{R}$ contains a copy of $A$ so it has positive $A$-rank.  So, $N \cong A^{\oplus n}$ is a free $A$-module, and there is an exact sequence of finite $A$-modules
\begin{align*}
	0 \to N \to \ol{R} \to Q \to 0
\end{align*}
where $Q$ is $A$-torsion. \\
\indt The conditions of theorem \ref{freeper} and corollary \ref{rankper} are satisfied, so we may fix a $T \gg 0$ such that the image of $N$ in $\olper{R}$ is a free $A$-module of maximal rank, for all minimal generators $\unideal{f} = I,$ and any $\ul{\epsilon} \in \mf{m}_R^T.$  For these $\unideal{f+\epsilon}$, there are exact sequences of finite $A$-modules
\begin{align*}
	0 \to N \to \olper{R} \to \per{Q} \to 0
\end{align*}
with $\per{Q}$ torsion over $A$.  Tensoring with $K$, we conclude that for each of these $\unideal{f+\epsilon}$ there are $K$-module isomorphisms
\begin{align*}
	N \otimes_A K \cong \olper{R} \otimes_A K,
\end{align*} 
and we have a commuting diagrams
$$	\begin{tikzcd}    
			 N \arrow[r] \arrow[d] & \olper{R} \arrow[d] \\
		 	 N \otimes_A K \arrow[r, "\sim"]	 & \olper{R} \otimes_A K
 	\end{tikzcd}
$$
Of course, this means that these $\olper{R} \otimes_A K$ are all isomorphic as $K$ vector spaces, though they are not necessarily isomorphic as $K$-algebras --- the multiplication depends on multiplication in the $\olper{R},$ which may well be non-isomorphic rings, even for arbitrarily large $T$. \\
\indt It follows from this that the generators, $\ul{r} = r_1, \dots, r_n$ of $N$, simultaneously map to $K$-bases for all of the algebras $\olper{R} \otimes_A K.$  Next, we are going to use this simultaneous basis to compute the $A$-linear trace maps $\mathrm{trace}_{\olper{R} /A}: \olper{R} \to A,$ showing that they are 'close together' as maps in $\hm{A}{R}{A}.$ \\
\indt Recall, it was established in lemma \ref{anntors}, that we will have $\mu_A(\ol{R}) = \mu_A(\olper{R})$ when $\mf{m}_R^T \subset \mf{m}_R\left(I +\mf{m}_AR\right)$.  Following the notation of lemma \ref{anntors}, we will let $\ul{N}$ and $\ulper{N}$ denote the images of $N$ in $\ol{R}$ and $\olper{R}$, respectively.  Composing the $\mathrm{trace}_{\olper{R}/A}$ maps with the projections $R \to \olper{R}$ produces $A$-linear maps in $\hm{A}{R}{A}$ --- we will continue to denote these maps by $\mathrm{trace}_{\olper{R}/A}.$
\begin{lemma} \label{traceper}
Suppose that $T \gg 0$ is large enough that the discussion in the previous few paragraphs holds, and fix any $0 \neq c \in \ann{A}{\ol{R}/\ul{N}}$.  Set $m = \mu_A(\ol{R}),$ and increase $T$, if necessary, to ensure that
$$\mf{m}_R^T \subset \mf{m}_R \left( I + \mf{m}_A^H R\right),$$ where $H > 2t',$ with $$t' = \max \left(\AR{\mf{m}_A}{c^{2m}A \subset A}{A}, \, \AR{\mf{m}_A}{\ul{N} \subset \ol{R}}{A} \right).$$ 
Then, for any $r \in R,$ any minimal generators $\unideal{f} = I,$ and all $\epsilon_1, \dots, \epsilon_c \in \mf{m}_R^T,$ 
\begin{align*}
	\trc{\ol{R}}{A}{r} - \trc{\olper{R}}{A}{r} \in \mf{m}_A^{H-2t'} \, .
\end{align*}
\end{lemma}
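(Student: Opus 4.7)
The plan is to compute both traces using the simultaneous $K$-basis $\ul{r} \otimes 1$ of $\ol{R} \otimes_A K$ and $\olper{R} \otimes_A K$, clear denominators using $c$ and an auxiliary element $c'$ supplied by Lemma \ref{anntors} so that all matrix entries land in $A$, and then compare the resulting diagonals via a lifting trick that routes through $R/(I + \mf{m}_A^H R)$.

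First I would apply Lemma \ref{anntors} to write $c^m = c' + \mu$ with $c' \in \ann{A}{\olper{R}/\ulper{N}}$ and $\mu \in \mf{m}_A^H$. Since $\ul{N} \subset \ol{R}$ and $\ulper{N} \subset \olper{R}$ are both free $A$-modules of rank $n$ on the images of $\ul{r}$, the containments $c \ol{R} \subset \ul{N}$ and $c' \olper{R} \subset \ulper{N}$ produce uniquely determined $a_{ij}, \tilde{a}_{ij} \in A$ via
\begin{align*}
c \cdot \ol{r r_i} = \sum_j a_{ij} \ol{r_j}, \qquad c' \cdot \olper{r r_i} = \sum_j \tilde{a}_{ij} \olper{r_j}.
\end{align*}
Expressing multiplication-by-$r$ in the simultaneous $K$-basis yields $c \, \trc{\ol{R}}{A}{r} = \sum_i a_{ii}$ and $c' \, \trc{\olper{R}}{A}{r} = \sum_i \tilde{a}_{ii}$.

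Next I would relate $\tilde{a}_{ij}$ to $a_{ij}$. Multiplying the $\ol{R}$-relation by $c^{m-1}$ inside $R$ gives $c^m r r_i - c^{m-1} \sum_j a_{ij} r_j \in I$; the identity $I + \mf{m}_A^H R = \unideal{f+\epsilon} + \mf{m}_A^H R$ of Lemma \ref{naklemma} then forces, after reducing modulo $\unideal{f+\epsilon}$, the congruence $c^m \olper{r r_i} \equiv c^{m-1} \sum_j a_{ij} \olper{r_j} \pmod{\mf{m}_A^H \olper{R}}$. Substituting $c^m = c' + \mu$ and absorbing $\mu \olper{r r_i}$ into $\mf{m}_A^H \olper{R}$ produces the key congruence
\begin{align*}
\sum_j (\tilde{a}_{ij} - c^{m-1} a_{ij}) \olper{r_j} \in \mf{m}_A^H \olper{R}.
\end{align*}

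The main obstacle appears here: a direct Artin-Rees argument would require uniform control of the Artin-Rees number for $\ulper{N} \subset \olper{R}$, which in principle depends on the perturbation. The workaround is to lift the preceding relation back to $R$, where the left-hand side lies in $\unideal{f+\epsilon} + \mf{m}_A^H R = I + \mf{m}_A^H R$, and then reduce modulo $I$; the same $A$-combination then sits in $\ul{N} \cap \mf{m}_A^H \ol{R}$, where the fixed Artin-Rees number $t' \geq \AR{\mf{m}_A}{\ul{N} \subset \ol{R}}{A}$ applies. Freeness of $\ul{N}$ with basis $\ul{r}$ forces $\tilde{a}_{ij} - c^{m-1} a_{ij} \in \mf{m}_A^{H-t'}$ entry-by-entry, and summing diagonals produces $c' \trc{\olper{R}}{A}{r} - c^m \trc{\ol{R}}{A}{r} \in \mf{m}_A^{H-t'}$. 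To finish, I would multiply by $c^m$, use $c^m c' = c^{2m} - c^m \mu$ together with $c^m \mu \in \mf{m}_A^H$ to obtain $c^{2m}\bigl(\trc{\olper{R}}{A}{r} - \trc{\ol{R}}{A}{r}\bigr) \in \mf{m}_A^{H-t'}$, and invoke Artin-Rees for $c^{2m} A \subset A$ (whose AR number is at most $t'$) together with the fact that $c^{2m}$ is a nonzerodivisor in the domain $A$ to cancel $c^{2m}$ and arrive at the asserted containment in $\mf{m}_A^{H-2t'}$.
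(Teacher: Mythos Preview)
Your proof is correct and follows essentially the same strategy as the paper: obtain an annihilator $c'$ for $\olper{R}/\ulper{N}$ close to $c^m$ via Lemma~\ref{anntors}, route the comparison of matrix entries through $\ol{R}$ so that the fixed Artin--Rees number $\AR{\mf{m}_A}{\ul{N} \subset \ol{R}}{A}$ applies, and cancel $c^{2m}$ at the end using the other Artin--Rees bound. The only cosmetic difference is that the paper multiplies by the symmetric product $bc^m$ up front and writes $bc^m r r_i$ two ways in $R$ (once modulo $I$, once modulo $\unideal{f+\epsilon}$), which lets the comparison in $\ol{R}$ happen in one step rather than via your lift-and-reduce workaround; both executions use the same ingredients and the same two Artin--Rees constants.
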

\begin{proof}
\indt Fix any such $\unideal{f+\epsilon},$ and choose, as in lemma \ref{anntors}, a $b \in \ann{A}{\olper{R}/\ulper{N}},$ such that $$b - c^m \in \mf{m}_A^H.$$  By construction, we have,
\begin{align*}
	bR &\subset N + \unideal{f+\epsilon} \\
	c^m R &\subset N + I \, .
\end{align*} 
Thus, given any fixed $r \in R,$ there are $\alpha_{ij}, \beta_{ij} \in A,$ $i,j = 1, \dots, n,$ such that
\begin{align*}
	bc^mrr_i &= \alpha_{i1}r_1 + \dots + \alpha_{in}r_n + F_i \\
			 &= \beta_{i1}r_1 + \dots + \beta_{in} r_n + G_i,
\end{align*}
with $F_1, \dots, F_n \in I,$ and $G_1, \dots, G_n \in \unideal{f+\epsilon}$ --- recall, here, $\ul{r} = r_1, \dots, r_n$ is our basis for $N$ as an $A$-module. \\
\indt Mapping this equality to $\ol{R}$, and using $\ol{r_i}$ to denote the image of $r_i$ in $\ol{R},$ we conclude that
\begin{align*}
	\left(\alpha_{i1}-\beta_{i1}\right)\ol{r_1} + \dots + \left(\alpha_{in}-\beta_{in}\right)\ol{r}_n = \ol{G_i}.
\end{align*}
Each of these $\ol{G_i} \in \unideal{f+\epsilon}\ol{R} \subset \mf{m}_A^H \ol{R},$ and so 
\begin{align*}
	\left(\alpha_{i1}-\beta_{i1}\right)\ol{r_1} + \dots + \left(\alpha_{in}-\beta_{in}\right)\ol{r}_n \in \mf{m}_A^H \ol{R} \cap \ul{N} \subset \mf{m}_A^{H-t'} \ul{N}.
\end{align*}
Since $\ul{N}$ is free over $A$ with basis $\ol{r_1}, \dots, \ol{r_n}$, we conclude that $$\alpha_{ij} - \beta_{ij} \in \mf{m}_A^{H-t'},$$ for each $i,j = 1, \dots, n.$ \\
\indt Now, the trace is $A$-linear, so we have
\begin{align*}
	bc^m \left(\trc{\olper{R}}{A}{r}\right) &= \trc{\olper{R}}{A}{bc^mr} \\
	                                       &= \trc{\olper{R}\otimes_A K}{K}{bc^mr} \\
	                                       &= \beta_{11} + \beta_{22} + \dots + \beta_{nn},
\end{align*}
where the last equality follows from the fact, established above, that the images of $r_1, \dots, r_n$ constitute a basis in $\olper{R} \otimes_A K.$  The same reasoning applies to $bc^m\trc{\ol{R}}{A}{r},$ and so we have
\begin{align*}
	bc^m \left(\trc{\ol{R}}{A}{r}\right) = \alpha_{11} + \dots + \alpha_{nn}.
\end{align*}
\indt Therefore, we have established that 
\begin{align*}
	bc^m\left(\trc{\ol{R}}{A}{r} - \trc{\olper{R}}{A}{r} \right) \in \mf{m}_A^{H-t'},
\end{align*}
and, recalling that $b - c^m \in \mf{m}_A^H,$ we have
\begin{align*}
	c^{2m} \left(\trc{\ol{R}}{A}{r} - \trc{\olper{R}}{A}{r} \right) \in \mf{m}_A^{H-t'} \cap \left(c^{2m}A\right) \subset \mf{m}_A^{H-2t'}\left(c^{2m}A\right).
\end{align*}
Since $A$ is a domain, and $c \neq 0,$ we conclude that 
\begin{align*}
	\trc{\ol{R}}{A}{r} - \trc{\olper{R}}{A}{r} \in \mf{m}_A^{H-2t'},
\end{align*}
and the claim is proved.
\end{proof}
\indt Continuing with the setup above, assume now that $T$ is chosen as in lemma \ref{traceper}.  For each minimal $(f_1, f_2, \dots, f_c) = I,$ and any $\epsilon_1, \dots, \epsilon_c \in \mf{m}_R^T,$ let $\mathrm{Tr}_{\unideal{f+\epsilon}}$ denote the bilinear trace form on $\olper{R} \otimes_A K,$ and let $\mathrm{Tr}_I$ denote the corresponding trace form on $\ol{R} \otimes_A K.$  Define discriminants over $A$, as follows,
\begin{align*}
	\disc{A}{\ol{R}} &:= \mc{D}\left(\mathrm{Tr}_I, \, \ul{r}\right) = \det \left(\trc{\ol{R}}{A}{r_i r_j}\right) \\
	\disc{A}{\olper{R}} &:= \mc{D}\left(\mathrm{Tr}_{\unideal{f+\epsilon}}, \, \ul{r}\right) = \det \left(\trc{\olper{R}}{A}{r_i r_j} \right).
\end{align*}
\indt We have just proved, in lemma \ref{traceper},that the matricies
\begin{align*}
	\left(\trc{\ol{R}}{A}{r_i r_j}\right) \,\,\, \text{   and   } \,\,\, \left(\trc{\olper{R}}{A}{r_i r_j}\right)	
\end{align*}
are equal modulo $\mf{m}_A^{H-2t'}.$  It follows that their determinants must also be equal in $A/\mf{m}_A^{H-2t'}.$
\begin{cor} \label{Discper}
	Under the conditions of lemma \ref{traceper}, we have
\begin{align*}
	\disc{A}{\ol{R}} - \disc{A}{\olper{R}} \in \mf{m}_A^{H-2t'} \, .
\end{align*}
\end{cor}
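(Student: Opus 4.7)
The plan is to derive this directly from Lemma \ref{traceper}. First I would apply that lemma to each of the $n^2$ products $r_i r_j \in R$, where $r_1, \dots, r_n$ is the fixed $A$-basis of $N$ used to define the two discriminants. Since the elements $r_i r_j$ live in $R$, Lemma \ref{traceper} applies verbatim and yields
\begin{align*}
\trc{\ol{R}}{A}{r_i r_j} - \trc{\olper{R}}{A}{r_i r_j} \in \mf{m}_A^{H - 2t'}
\end{align*}
for every pair $(i,j)$ and for every minimal $(f_1, \dots, f_c) = I$ and $\epsilon_1, \dots, \epsilon_c \in \mf{m}_R^T$.

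Next, I would package this entry-wise congruence as a statement about matrices. Set $M := \left(\trc{\ol{R}}{A}{r_i r_j}\right)$ and $M' := \left(\trc{\olper{R}}{A}{r_i r_j}\right)$ in $\mtr{n}{n}{A}$, and write $M' = M + E$ with $E \in \mtr{n}{n}{\mf{m}_A^{H-2t'}}$. The key observation is that the determinant is a polynomial in the matrix entries, so it respects congruences modulo any ideal. Concretely, the Leibniz expansion gives
\begin{align*}
\det(M') = \sum_{\sigma \in S_n} \mathrm{sgn}(\sigma) \prod_{i=1}^n \left(M_{i, \sigma(i)} + E_{i, \sigma(i)}\right),
\end{align*}
and expanding each product yields $\det(M)$ together with a sum of terms each of which contains at least one factor from $\mf{m}_A^{H-2t'}$. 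Since $\mf{m}_A^{H-2t'}$ is an ideal, every such extra term lies in $\mf{m}_A^{H-2t'}$, and therefore $\det(M') - \det(M) \in \mf{m}_A^{H-2t'}$. Unwinding the definitions of the discriminants then gives the claim.

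There is really no obstacle here: once Lemma \ref{traceper} is in hand, the corollary is a formal consequence of the fact that the determinant is a polynomial map and that a principal ideal in a commutative ring is closed under addition and multiplication by ring elements. The only thing worth a sentence of care is noticing that $r_i r_j \in R$, so the hypothesis of Lemma \ref{traceper} genuinely applies (rather than, say, attempting to apply it to elements of $\olper{R}$, where the trace statement would not make invariant sense across different perturbations).
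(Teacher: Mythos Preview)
Your proof is correct and is essentially identical to the paper's: the paper simply notes that Lemma~\ref{traceper} forces the two trace matrices $\left(\trc{\ol{R}}{A}{r_ir_j}\right)$ and $\left(\trc{\olper{R}}{A}{r_ir_j}\right)$ to agree modulo $\mf{m}_A^{H-2t'}$, and hence so do their determinants. (A tiny quibble: in your closing sentence you write ``principal ideal'' where you mean ``ideal''; $\mf{m}_A^{H-2t'}$ need not be principal, but your argument only uses that it is an ideal, which you already stated correctly above.)
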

\begin{obs} \label{DisTorper}
Note that, if we let $c \in \ann{A}{\ol{R}/\ul{N}}$ and $b \in \ann{A}{\olper{R}/\ulper{N}}$ be as in the proof of lemma \ref{traceper} --- so that $c^m - b \in \mf{m}_A^H$ --- the arguments above show that we have,
\begin{align*}
	c^m\disc{A}{\ol{R}} - b\disc{A}{\olper{R}} \in \mf{m}_A^{H-2t'}.
\end{align*}
\end{obs}
\subsubsection{Perturbing the Hilbert-Kunz Multiplicity} \label{HKsection}
\indt The next result, due to Smirnov, is a convenient generalization of the results from section 6 of \cite{HH90}. \\
\begin{lemma} \label{smircor23} \textbf{(see Corollary 2.3 of \cite{I2019semicont})} 
Suppose that $A$ is a normal, Noetherian local ring of characteristic $p>0,$ and $A \xhookrightarrow{} S$ is a module finite local extension which is generically \'etale. \\
\indt Let $\ul{s} = s_1, \dots, s_n \in S$ be elements in $S$ that span a free $A$-module of maximal rank,
\begin{align*}
	N: = As_1 + \dots, +As_n.
\end{align*}
Let $K$ denote the quotient field of $A$, write $\mathrm{Tr}$ for the trace form of $S$ over $A$, and set
\begin{align*}
	\disc{A}{S} := \mc{D}\left(\mathrm{Tr}, \ul{s}\right).
\end{align*}
\indt Suppose that $a \in \ann{A}{S/N},$ and let $F: S \to S$ denote the Frobenius endomorphism.  Then, there are exact sequences of $S$-modules,
\begin{align*}
	A^{1/p} \otimes_A S \to F_{*}S \to Q_1 &\to 0 \\
	F_{*}S \to A^{1/p} \otimes_A S \to Q_2 &\to 0
\end{align*}
such that $a\disc{A}{S}Q_1 = a\disc{A}{S}Q_2 = 0.$
\end{lemma}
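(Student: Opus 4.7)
The plan is to generalize the classical discriminant splitting of \cite{HH90}, Section~6, by constructing a matched pair of $S$-linear maps
\begin{align*}
\phi: A^{1/p} \otimes_A S \to F_{*}S \quad \text{and} \quad \psi: F_{*}S \to A^{1/p} \otimes_A S
\end{align*}
arranged so that both compositions $\phi \circ \psi$ and $\psi \circ \phi$ equal multiplication by $a\disc{A}{S}$. Once this is in place, the two exact sequences in the statement are just the defining cokernel sequences for $\phi$ and $\psi$; the annihilation statements then follow immediately, since if $\phi \circ \psi = a\disc{A}{S}\cdot \mathrm{id}_{F_{*}S}$ then $a\disc{A}{S}\,y = \phi(\psi(y)) \in \mathrm{im}(\phi)$ for every $y \in F_{*}S$, forcing $a\disc{A}{S}\, Q_1 = 0$, and symmetrically for $Q_2$.

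The natural candidate for $\phi$ is the $S$-linear map $a^{1/p}\otimes s \mapsto a^{1/p} s$ viewed inside $S^{1/p} \cong F_{*}S$. To construct $\psi$, the key preliminary is a dual-basis argument: because $B := S \otimes_A K$ is \'etale over $K$, the trace form is non-degenerate and the basis $s_1, \dots, s_n$ admits a dual basis $s_1^{*}, \dots, s_n^{*} \in B$. Cramer's rule applied to the trace-form matrix gives the crucial containment $\disc{A}{S}\, s_j^{*} \in N$ for each $j$, which when combined with the dual-basis expansion $x = \sum_j \trc{S}{A}{xs_j}\, s_j^{*}$ (valid in $B$ for every $x \in S$, with traces lying in $A$ by lemma \ref{tracemap}) yields the preliminary containment $\disc{A}{S}\cdot S \subset N$. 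The map $\psi$ can then be defined, using the hypothesis $aS \subset N$, by expanding $ax$ along the free $A$-basis $s_1, \dots, s_n$ of $N$ and packaging the resulting coefficients as Frobenius-twisted tensors against the elements $\disc{A}{S}\, s_j^{*} \in N$.

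The technical heart of the argument will be verifying that the compositions actually compute multiplication by $a\disc{A}{S}$. This rests on the characteristic-$p$ identity that, inside $B^{1/p} = B \otimes_K K^{1/p}$, any $x \in S$ admits an expansion $x^{1/p} = \sum_j k_j^{1/p} s_j$ where $x = \sum_j k_j s_j^p$ in $B$, a computation made possible because $\{s_j^p\}$ is again a $K$-basis of $B$ by separability of $B/K$. In this calculation the factor $\disc{A}{S}$ clears the change-of-basis denominators linking $\{s_j\}$ and $\{s_j^p\}$, while $a$ absorbs the failure of $N \subsetneq S$. When $N = S$ and one may take $a = 1$, the construction specializes to the classical Hochster--Huneke splitting; the generalization here essentially consists of threading the extra factor $a$ through every step of that argument.
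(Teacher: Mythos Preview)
The paper does not give a proof of this lemma; it is stated with a citation to Smirnov \cite{I2019semicont}, Corollary~2.3, and then used as a black box.  Your proposal---constructing $S$-linear maps $\phi$ and $\psi$ so that both compositions are multiplication by $a\,\disc{A}{S}$, via the dual-basis/Cramer argument on the trace form---is exactly the method behind that result, and is the natural generalization of the Hochster--Huneke argument you cite.

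One point in your sketch does deserve care.  Your description of $\psi$ (``expand $ax$ along the free $A$-basis of $N$ and package the coefficients as Frobenius-twisted tensors against $\disc{A}{S}\,s_j^{*}$'') produces a well-defined $A^{1/p}$-linear map, but $S$-linearity is not automatic: $N$ is only an $A$-submodule of $S$, not an $S$-submodule, so the recipe ``multiply by $a$, expand in $N$'' does not obviously commute with the $S$-action.  The clean way around this is to realize $\psi$ via the trace pairing on $S^{1/p}/A^{1/p}$ (which is the base change of the trace on $B/K$, since $B/K$ is \'etale), so that $\psi$ is given by a formula of the shape $y \mapsto \sum_j \trc{S^{1/p}}{A^{1/p}}{y\, t_j}\otimes s_j$ with suitable $t_j \in S$; the factor $a$ then enters because one needs $t_j \in S$ (not just $B$), and this is exactly where $aS \subset N$ and $\disc{A}{S}\,s_j^{*} \in N$ are both used.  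This is presumably what you mean by ``the technical heart,'' and it is indeed where the work lies.
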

\indt We are going to use this result to uniformly control the growth of the Hilbert-Kunz functions of the $\olper{R}$.  In order to do so, we need these rings to be generically \'etale over $A.$ 
\begin{prop} \label{genetaleper}
Suppose $(R, \mf{m}_R)$ and $(A, \mf{m})$ are complete Noetherian local rings, that $I \subset R$ is an ideal and that $R, A$ and $\ol{R} = R/I$ are related as in a diagram of the form $D$.  Suppose further that $A$ is a normal domain of positive dimension, that the finite extension $A \xhookrightarrow{} \ol{R}$ is generically \'etale, and that there is a $0 \neq d \in A$ such that
\begin{align*}
	d \hgy{1}{I; R} = 0.
\end{align*}
\indt Then, there is a $T \gg 0$ such that for all minimal generators $(f_1, \dots, f_c) = I$ and any $\epsilon_1, \dots, \epsilon_c \in \mf{m}_R^T,$ the composition $A \xhookrightarrow{} R \to \olper{R}$ is a generically \'etale module finite extension, 
$$A \xhookrightarrow{} \olper{R}.$$
\end{prop}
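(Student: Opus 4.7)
The strategy is to detect generic \'etaleness via non-vanishing of the discriminant (Proposition~\ref{discetale}), and to control this non-vanishing across small perturbations using the $\mf{m}_A$-adic continuity of the discriminant established in Corollary~\ref{Discper}. Everything needed has effectively been set up in the preceding sections, so the proof will essentially be a matter of assembling the pieces in the correct order.

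First I would secure the structural hypotheses. Since $A$ is a normal domain of positive dimension, $\depth{\mf{m}_A}{A} \ge 1$, so Theorem~\ref{freeper} and Lemma~\ref{mgenslemma} both apply. Applying Theorem~\ref{freeper} to the free $A$-submodule $A \cdot 1 \subset R$ (as pointed out in the remark following that theorem) and combining with Lemma~\ref{mgenslemma}, there is a $T_1$ such that for all minimal generators $\unideal{f} = I$ and all $\epsilon_1, \dots, \epsilon_c \in \mf{m}_R^{T_1}$, the composition $A \xhookrightarrow{} R \to \olper{R}$ is a module finite injection.

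Next, I would establish a common basis that simultaneously computes the discriminant of every $\olper{R}$. Choose $r_1, \dots, r_n \in R$ whose images in $\ol{R}$ form an $A$-basis of a maximal-rank free $A$-submodule $N \subset \ol{R}$. By Theorem~\ref{freeper} applied to $N$, together with Corollary~\ref{rankper}, there is a $T_2 \ge T_1$ so that for all minimal $\unideal{f} = I$ and all $\epsilon_1, \dots, \epsilon_c \in \mf{m}_R^{T_2}$, the images of $r_1, \dots, r_n$ in $\olper{R}$ span a maximal-rank free $A$-submodule, and in particular $r_1 \otimes 1, \dots, r_n \otimes 1$ form a $K$-basis of $\olper{R} \otimes_A K$, where $K = \mathrm{Frac}(A)$. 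This is exactly the setup of subsection~\ref{subSecper}, so each $\disc{A}{\olper{R}} = \mc{D}(\mathrm{Tr}_{\unideal{f+\epsilon}}, \ul{r})$ is a well-defined element of $A$ computed against the same basis $\ul{r}$.

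Since $A \xhookrightarrow{} \ol{R}$ is generically \'etale, Proposition~\ref{discetale} gives $\disc{A}{\ol{R}} \ne 0$, so by Krull intersection there is some $Q \in \mathbb{N}$ with $\disc{A}{\ol{R}} \notin \mf{m}_A^Q$. Applying Corollary~\ref{Discper} with this $Q$ produces a $T \ge T_2$ such that for all minimal $\unideal{f} = I$ and all $\epsilon_1, \dots, \epsilon_c \in \mf{m}_R^T$,
\begin{align*}
\disc{A}{\ol{R}} - \disc{A}{\olper{R}} \in \mf{m}_A^Q \, .
\end{align*}
Therefore $\disc{A}{\olper{R}} \notin \mf{m}_A^Q$, and in particular $\disc{A}{\olper{R}} \ne 0$. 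Invoking Proposition~\ref{discetale} once more in the reverse direction yields that $A \xhookrightarrow{} \olper{R}$ is generically \'etale, as required. There is no serious remaining obstacle: the substantive content is all packaged in Corollary~\ref{Discper}, and the proof amounts to choosing the right basis so that Corollary~\ref{Discper} becomes applicable, then reading off the conclusion from Proposition~\ref{discetale}.
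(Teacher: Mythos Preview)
Your proposal is correct and follows essentially the same approach as the paper: the paper's proof is a one-line citation of Proposition~\ref{discetale}, Corollary~\ref{Discper}, and the remark establishing injectivity (observation~\ref{injmap}), and your argument simply unpacks these citations in the natural way. The only difference is that you spell out explicitly the choice of $Q$ via Krull intersection and the intermediate role of the common basis $\ul{r}$, both of which are implicit in the paper's setup preceding Corollary~\ref{Discper}.
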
 
\begin{proof}
	This follows from proposition \ref{discetale}, corollary \ref{Discper}, and observation \ref{injmap}.
\end{proof}
\indt We also make use of a slightly non-standard form of the Cohen-Gabber structure theorem for complete Noetherian local rings.  This result is well known to the experts, but it is difficult to find a reference.
\begin{lemma} \label{CGlemma}
	Suppose that $(S, \mf{m}_S, \kappa)$ is a complete equicharacteristic Noetherian local ring of dimension $\dim S = a.$  Suppose that $S$ is equidimensional and satisfies $\dim_{S} \nilrad{S} < a$. \\
\indt Then there exists an s.o.p. $x_1, \dots, x_a \in \mf{m}_S$ and coefficient field $\kappa \xhookrightarrow{} S,$ such that the induced map 
	\begin{align*}
		\kappa[[x_1, \dots, x_a]] \hookrightarrow S
	\end{align*} 
	is module finite and generically \'etale. 
\end{lemma}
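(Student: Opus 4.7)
The plan is to reduce to the classical Cohen-Gabber structure theorem for reduced equidimensional complete local rings, and then transfer the conclusion from $S_{\mathrm{red}} := S/\nilrad{S}$ back to $S$ using the hypothesis $\dim_S \nilrad{S} < a.$ First, I would observe that $S_{\mathrm{red}}$ is a reduced, complete, equicharacteristic Noetherian local ring which is equidimensional of dimension $a$: the minimal primes of $S$ and $S_{\mathrm{red}}$ correspond via $\mf{p} \mapsto \mf{p}/\nilrad{S}$ with coheights preserved, so equidimensionality of $S$ transfers directly, and $\dim S = \dim S_{\mathrm{red}} = a.$ Applying the Cohen-Gabber structure theorem to $S_{\mathrm{red}}$ produces a coefficient field $\kappa \hookrightarrow S_{\mathrm{red}}$ and a system of parameters $\bar{x}_1, \dots, \bar{x}_a \in \mf{m}_{S_{\mathrm{red}}}$ such that $A := \kappa[[\bar{x}_1, \dots, \bar{x}_a]] \hookrightarrow S_{\mathrm{red}}$ is module finite and generically \'etale.

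Next, I would lift this structure to $S.$ Pick preimages $x_i \in \mf{m}_S$ of the $\bar{x}_i$; these form a system of parameters of $S$ because their images generate an $\mf{m}_{S_{\mathrm{red}}}$-primary ideal of $S_{\mathrm{red}}$ and $\nilrad{S}$ consists of nilpotents, which forces $(x_1, \dots, x_a)S$ to be $\mf{m}_S$-primary. Lift the coefficient field $\kappa \hookrightarrow S_{\mathrm{red}}$ to a coefficient field $\kappa \hookrightarrow S$ using the standard fact that coefficient fields lift through nilpotent thickenings of equicharacteristic complete local rings. This yields a local map $A = \kappa[[x_1, \dots, x_a]] \to S$ whose composition with $S \twoheadrightarrow S_{\mathrm{red}}$ recovers the Cohen-Gabber map from the previous paragraph.

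Finally, I would verify that $A \to S$ is itself module finite and generically \'etale. For finiteness, filter $S$ by the powers of $\nilrad{S}$: each successive quotient $\nilrad{S}^k/\nilrad{S}^{k+1}$ is a finitely generated $S_{\mathrm{red}}$-module, hence finitely generated over $A$ by construction, and the filtration terminates because $\nilrad{S}$ is a nilpotent ideal in a Noetherian ring. For generic \'etaleness, let $K = \mathrm{Frac}(A)$ and tensor the short exact sequence $0 \to \nilrad{S} \to S \to S_{\mathrm{red}} \to 0$ with $K$ over $A$ (which is flat). Since $A \to S$ is now known to be finite, $\dim_A \nilrad{S} = \dim_S \nilrad{S} < a = \dim A,$ so $\ann{A}{\nilrad{S}}$ is a nonzero ideal of the domain $A$ and $\nilrad{S} \otimes_A K = 0.$ This gives an isomorphism of $K$-algebras $S \otimes_A K \cong S_{\mathrm{red}} \otimes_A K,$ and the latter is \'etale by the Cohen-Gabber step. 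The main obstacle is the appeal to Cohen-Gabber itself together with the coefficient-field lifting through the nilpotent thickening $S \twoheadrightarrow S_{\mathrm{red}}$ in positive characteristic; both are classical facts, but as the paper observes, they are not easy to find assembled in a single reference.
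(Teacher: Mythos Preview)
Your proposal is correct and follows essentially the same route as the paper: reduce modulo the nilradical, invoke Cohen--Gabber on $S_{\mathrm{red}}$, lift the resulting Noether normalization back to $S$, and then use the hypothesis $\dim_S \nilrad{S} < a$ to see that $\nilrad{S}$ is $A$-torsion, so that $S \otimes_A K \cong S_{\mathrm{red}} \otimes_A K$ as $K$-algebras. You supply more detail than the paper on the lifting step (separating the coefficient-field lift from the parameter lift) and on module-finiteness (via the filtration by powers of $\nilrad{S}$), but the strategy is identical.
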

\begin{proof}
Write $B = S/\nilrad{S},$ and note that $\dim B = \dim S$ and $B$ is both equidimensional and reduced.  By the Cohen-Gabber structure theorem (see \cite{kurano2015elementary}) there is a module finite, generically \'etale extension
\begin{align*}
	A \hookrightarrow B,
\end{align*}
with $A$ a power series ring over $\kappa,$ in a system of parameters on $B$. \\
\indt This map lifts to a module finite extension, $A \hookrightarrow S$, such that the composition $A \hookrightarrow S \twoheadrightarrow B$ agrees with
\begin{align*}
	A \hookrightarrow B.
\end{align*} 
\indt By assumption $\ann{S}{\nilrad{S}}$ contains a parameter, and therefore $A \cap \ann{S}{\nilrad{S}} \neq 0$ --- i.e. $\nilrad{S}$ is $A$-torsion, so that there is $K$-algebra isomorphism $$S \otimes_A K \cong B \otimes_A K.$$  Since $B \otimes_A K$ is \'etale over $A$, $S \otimes_A K$ is as well.  
\end{proof}
\indt Now the proof begins in earnest.  Following \cite{polstra_smirnov_2018} and \cite{AIstab} we first show how to uniformly control the growth of the Hilbert-Kunz functions of interest. \\
\begin{prop} \label{UniformBounds}
Suppose that $(R, \mf{m}_R, \kappa)$ is a Noetherian local ring of characteristic $p > 0$ such that $\kpdeg < \infty$, and that $I \subset R$ is an ideal with $a = \dim R/I \ge 1.$   Assume the following three conditions hold:
\begin{itemize}
\item[(i)] $\widehat{R}/I\widehat{R}$ is equidimensional;
\item[(ii)] $\dim_{R/I} \hgy{1}{I; R} < a$;
\item[(iii)]  $\dim_{\widehat{R}/I\widehat{R}} \nilrad{\widehat{R}/I\widehat{R}} < a \, .$ 
\end{itemize}
\indt Then, for any $e_0 \ge 0,$ there is a constant, $C = C_{e_0} > 0,$ and an integer, $T \gg 0,$ such that, for every ideal $J \subset R$ with $\mf{m}_R ^{\fbp{p^{e_0}}} \subset J,$ and all $e \ge 1$, 
\begin{align*}
	\Biggl|\len{R}{\dfrac{R}{\unideal{f+\epsilon} + J^{\fbp{p^{e+1}}}}} - p^a\len{R}{\dfrac{R}{\unideal{f+\epsilon} + J^{\fbp{p^{e}}}}} \Biggr| \le Cp^{e(a-1)}
\end{align*} 
for all minimal generators $(f_1, \dots, f_c) = I,$ and every $\epsilon_1, \dots, \epsilon_c \in \mf{m}_R^T.$
\end{prop}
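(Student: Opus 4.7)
The plan is to reduce to the complete $F$-finite case and adapt the framework of \cite{polstra_smirnov_2018, AIstab} using the perturbation tools of Sections \ref{SectionBasic}--\ref{refine}. First, pass to $\widehat{R}$: faithful flatness preserves lengths of $\mf{m}_R$-primary quotients and minimal generators of $I$, Koszul homology base-changes cleanly so condition (ii) persists over $\widehat{R}/I\widehat{R}$, and (i) and (iii) already concern the completion; moreover $\kpdeg < \infty$ guarantees $\widehat{R}$ is $F$-finite. With $\widehat{R}/I\widehat{R}$ complete, equidimensional, and satisfying the nilradical dimension bound, Lemma \ref{CGlemma} furnishes a coefficient field $\kappa \xhookrightarrow{} \widehat{R}/I\widehat{R}$ and parameters $x_1,\dots,x_a$ with $A:=\kappa[[x_1,\dots,x_a]]\xhookrightarrow{}\widehat{R}/I\widehat{R}$ module-finite and generically \'etale; Construction \ref{sopDiagram} lifts this to diagram $D$. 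Condition (ii) combined with Lemma \ref{dimH1} yields a nonzero $d \in A$ (automatically a nonzerodivisor since $A$ is a regular domain) with $d\hgy{1}{I\widehat{R};\widehat{R}}=0$.

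Combining Proposition \ref{genetaleper} and Theorem \ref{freeper} produces an integer $T_1$ such that for every admissible $\unideal{f+\epsilon}$ with $\epsilon_i \in \mf{m}_R^{T_1}$, the extension $A \xhookrightarrow{} \olper{\widehat{R}}$ is module-finite and generically \'etale, and a fixed free $A$-submodule $N \subset \widehat{R}$ spanned by lifts of a maximal-rank $A$-basis for $\ol{\widehat{R}}$ maps to a maximal-rank free $A$-submodule of each $\olper{\widehat{R}}$. Applying Lemma \ref{smircor23} uniformly to these extensions yields, for every such perturbation, short exact sequences
\begin{align*}
    A^{1/p} \otimes_A \olper{\widehat{R}} \to F_*\olper{\widehat{R}} \to \per{Q}_1 &\to 0, \\
    F_*\olper{\widehat{R}} \to A^{1/p} \otimes_A \olper{\widehat{R}} \to \per{Q}_2 &\to 0,
\end{align*}
with cokernels annihilated by $\alpha_\epsilon \cdot \disc{A}{\olper{\widehat{R}}}$, where $\alpha_\epsilon \in \ann{A}{\olper{\widehat{R}}/\ulper{N}}$ is supplied by Lemma \ref{anntors}.

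The crux of the argument, and the main obstacle, is to extract from these $\epsilon$-dependent annihilators a bound uniform across all perturbations. Fix a nonzero $c \in \ann{A}{\ol{\widehat{R}}/\ul{N}}$ and set $m = \mu_A(\ol{\widehat{R}})$; since $A \xhookrightarrow{} \ol{\widehat{R}}$ is generically \'etale, Proposition \ref{discetale} gives $\disc{A}{\ol{\widehat{R}}} \neq 0$, so $c^m \disc{A}{\ol{\widehat{R}}}$ is a nonzero element of the domain $A$. Choose $Q_0$ with $c^m\disc{A}{\ol{\widehat{R}}} \notin \mf{m}_A^{Q_0+1}$ and, invoking Observation \ref{DisTorper}, enlarge $T$ so that $\alpha_\epsilon \disc{A}{\olper{\widehat{R}}} - c^m \disc{A}{\ol{\widehat{R}}} \in \mf{m}_A^{Q_0+1}$ uniformly in $\epsilon$. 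Then each $\per{Q}_i$ is annihilated by an element of $A$ with nonzero image in $\mf{m}_A^{Q_0}/\mf{m}_A^{Q_0+1}$, forcing $\dim_A \per{Q}_i \le a-1$ with $\mf{m}_A$-multiplicity bounded in terms of $Q_0$ and $\mu_A(\per{Q}_i)$. The latter is uniformly bounded since $\mu_A(\olper{\widehat{R}}) = \mu_A(\ol{\widehat{R}})$ by Lemma \ref{mgenslemma}, so $F_*\olper{\widehat{R}}$ and $A^{1/p}\otimes_A \olper{\widehat{R}}$ admit uniform generator bounds, as do their cokernels.

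Finally, translate the uniform support bound into a uniform length bound. Since $\mf{m}_R^{[p^{e_0}]} \subset J$ gives $\mf{m}_R^{[p^{e+e_0}]} \subset J^{[p^e]}$, standard length estimates for modules of dimension $\le a-1$ produce a constant $C_{e_0}$ with $\len{R}{\per{Q}_i/J^{[p^e]}\per{Q}_i} \le C_{e_0}\,p^{e(a-1)}$ for every admissible perturbation and every allowed $J$. The $F$-finite identities
\begin{align*}
    \len{R}{F_*\olper{\widehat{R}}/J^{[p^e]}F_*\olper{\widehat{R}}} &= \kpdeg \cdot \len{R}{\olper{\widehat{R}}/J^{[p^{e+1}]}\olper{\widehat{R}}}, \\
    \len{R}{A^{1/p}\otimes_A\olper{\widehat{R}}/J^{[p^e]}(A^{1/p}\otimes_A\olper{\widehat{R}})} &= \kpdeg \cdot p^a \cdot \len{R}{\olper{\widehat{R}}/J^{[p^e]}\olper{\widehat{R}}},
\end{align*}
passed through the two exact sequences, then yield the stated inequality, with the final $C$ absorbing the factor $1/\kpdeg$.
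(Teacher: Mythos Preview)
Your overall architecture matches the paper's: reduce to the complete $F$-finite case, invoke Cohen--Gabber (Lemma~\ref{CGlemma}) to produce a generically \'etale Noether normalization $A \hookrightarrow \ol{R}$, fit everything into diagram $D$, use Lemma~\ref{dimH1} to get $d$, use Theorem~\ref{freeper} and the discriminant comparison (Corollary~\ref{Discper}/Observation~\ref{DisTorper}) to make all small perturbations generically \'etale over $A$, and then apply Lemma~\ref{smircor23} to obtain the two exact sequences with cokernels $\per{Q}_1, \per{Q}_2$ killed by $g'_\epsilon := \alpha_\epsilon \disc{A}{\olper{R}}$. Up to that point your sketch is essentially the paper's argument.

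The gap is in the paragraph where you extract the uniform bound. You argue that since $g'_\epsilon$ has bounded $\mf{m}_A$-adic order and $\mu_A(\per{Q}_i)$ is bounded, ``standard length estimates for modules of dimension $\le a-1$'' give $\len{R}{\per{Q}_i/J^{[p^e]}\per{Q}_i} \le C_{e_0} p^{e(a-1)}$. But the modules $\per{Q}_i$ vary with $\epsilon$, and a bound on the order of the annihilator together with a bound on the number of generators does \emph{not} yield a uniform bound on $\len{A}{A/(g'_\epsilon, \mf{m}_A^{[q]})}$ by any standard result; the growth of such colengths depends on $g'_\epsilon$ in a more delicate way than its order, and the usual Hilbert--Kunz or Hilbert--Samuel asymptotics apply to a \emph{fixed} module, not uniformly over a family. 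So the step ``forcing $\dim_A \per{Q}_i \le a-1$ with $\mf{m}_A$-multiplicity bounded $\Rightarrow$ uniform length bound'' is exactly where the real work lies, and you have not done it.

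The paper closes this gap with a concrete device you omit. It extends the fixed element $g := c^m\disc{A}{\ol{R}}$ to a full system of parameters $g, h_1, \dots, h_{a-1}$ on $A$, and then chooses $H$ (hence $T$) large enough that $g - g'_\epsilon \in \mf{m}_A^{H-2t'} \subset \mf{m}_A\bigl(gA + \unideal{h}^{[p^{e_0}]}A\bigr)$. Lemma~\ref{naklemma}, applied first in $A$ and then in $R$, gives the equality
\[
\unideal{f+\epsilon} + g'_\epsilon R + \unideal{h}^{[p^{e_0}]}R \;=\; I + gR + \unideal{h}^{[p^{e_0}]}R,
\]
a single \emph{fixed} $\mf{m}_R$-primary ideal independent of $\epsilon$. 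Since $\unideal{h}$ has only $a-1$ elements, Lemma~\ref{filtration} then gives
\[
\len{R}{\dfrac{R}{\unideal{f+\epsilon} + g'_\epsilon R + J^{[p^e]}}} \le p^{e(a-1)} \len{R}{\dfrac{R}{I + gR + \unideal{h}^{[p^{e_0}]}R}},
\]
and the right-hand length is the sought-for constant. This replacement of a family of varying annihilators by a single fixed ideal is the missing idea in your sketch; once you insert it, the rest of your argument goes through and coincides with the paper's.
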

\begin{remark} \label{altStatement}  Before launching into the proof of proposition \ref{UniformBounds}, we make a couple of notes about the conditions (i), (ii) and (iii) appearing in the statement. \\ 
\indt According to the discussion in section \ref{techcon}, in particular remark \ref{equidim} and proposition \ref{dimH1altsupp}, conditions (i) and (ii) in proposition \ref{UniformBounds} force $I$ to be a parameter ideal, i.e. $\mu_R(I) = \height{I}.$  \\
\indt Moreover, again by proposition \ref{dimH1altsupp}, (ii) is equivalent to the requirement that the minimal generators of $I$ are 'generically a regular sequence,' i.e. they form a regular sequence in $R_{\mf{q}}$ for every $\mf{q} \in \minh{R/I},$ and since $I$ is a parameter ideal, this is equivalent to $R_{\mf{q}}$ being Cohen-Macaulay for every $\mf{q} \in \minh{R/I}$ (see remark \ref{dimH1_converseFilterReg_Reg} (iv)). \\
\indt It is also clear that (iii) is equivalent to the condition that  $\left(\widehat{R}/I\widehat{R}\right)_{\mf{q}}$ is reduced for every $\mf{q} \in \minh{\widehat{R}/I\widehat{R}}.$
\end{remark}
\begin{proof}
Condition (ii) is unaffected by completion, neither are the lengths involved, so we assume, without any loss of generality, that $R$ is complete.  Write $\ol{R} = R/I.$  Note that we are assuming $R$ is complete with $F$-finite residue field --- it follows that $R$ itself is $F$-finite. \\
\indt Conditions (i) and (iii) hold, so lemma \ref{CGlemma} applies to $\ol{R}.$  Choose parameters $\ul{x} = x_1, \dots, x_a \in \ol{R},$ such that the associated finite extension,
\begin{align*}
	A := \kappa[[x_1, \dots, x_a]] \xhookrightarrow{} \ol{R},
\end{align*}
is generically \'etale. \\
\indt As in construction \ref{sopDiagram}, $R$, $A$ and $\ol{R}$ fit into a commutative diagram of local rings of the familiar form, 
$$D :\begin{tikzcd}[row sep=large, column sep=large]    
			& R \arrow[d, twoheadrightarrow] \\
		 A \arrow[r, hookrightarrow, "finite", swap] \arrow[ru, hookrightarrow]	 & \ol{R}
 \end{tikzcd}$$
\indt By lemma \ref{dimH1}, we may choose a nonzero $d \in A$ such that $d\hgy{1}{I; R} = 0.$   Next, pick $\ul{r} = r_1, \dots, r_n \in R$, as in section \ref{subSecper}, which span a free $A$-submodule in $\ol{R}$ of maximal rank.  As before, set
\begin{align*}
	N := Ar_1 + \dots + Ar_n,
\end{align*}
and let $\ul{N}$ denote the image of $N$ in $\ol{R}.$  Set $$\disc{A}{\ol{R}} := \mc{D}\left(\mathrm{Tr}_I, \ul{r}\right),$$  and recall that $\disc{A}{\ol{R}} \neq 0,$ by lemma \ref{discetale}.  Fix a nonzero $c \in \ann{A}{\ol{R}/\ul{N}}$ which is a non-unit, and write $m = \mu_A(\ol{R}).$  \\
\indt These choices ensure that $$g := c^m \disc{A}{\ol{R}} \in A$$ is a regular element contained in $\mf{m}_A$, so that it is a parameter on $A$.  Fix a full system of parameters for $A$, that extends $g,$ $$g , h_1, \dots, h_{a-1} = g, \ul{h} \in \mf{m}_A.$$ \\
\indt Next, set $t = \max \left(t_1, t_2, t_3, t_4\right)$ and $t' = \max \left(t_4, t_5\right)$ where these $t_i$ are the following Artin-Rees numbers from the proofs of lemmas \ref{mainlemma} and \ref{traceper}: 
\begin{align*}
	t_1 &= \AR{\mf{m}_A R}{IM \subset M}{R}, \\
	t_2 &= \AR{\mf{m}_A R}{ \knl{\partial^{M}_{1, I}} \subset M^{\oplus c}}{R}, \\
	t_3 &= \AR{\mf{m}_A}{d\ul{N} \subset \ol{M}}{A}, \\
	t_4 &= \AR{\mf{m}_A}{\ul{N} \subset \ol{M}}{A}, \\
	t_5 &= \AR{\mf{m}_A}{c^{2m}\ul{N} \subset \ol{M}}{A}.
\end{align*}
\indt Let $H \in \mathbb{N}$ be large enough that the following conditions are satisfied
\begin{itemize}
\item[(a)] $H \ge 3t + 1,$
\item[(b)] $\mf{m}_A ^{H-2t'} \subset \mf{m}_A \left( gA + \unideal{h}^{\fbp{p^{e_0}}}A \right).$
\end{itemize}
\indt Since (b) holds, we certainly have $H > 2t',$ so that the conditions of lemmas \ref{mainlemma} and \ref{traceper} are all met.  With this choice of $H$, let $T \in \mathbb{N}$ be any integer such that $$\mf{m}_R^T \subset \mf{m}_R \left( I + \mf{m}_A^H R \right).$$ \\
\indt Next we show that, for this $T$, there is a uniform constant, $C,$ such that the bound in the statement holds.  With this in mind, let $(f_1, \dots, f_c) = I$ be minimal generators and $\epsilon_1, \dots, \epsilon_c \in \mf{m}_R^T$ any fixed elements in $\mf{m}_R^T.$  The conditions of theorem \ref{freeper} and lemma \ref{rankper} are satisfied, so the image, $\ulper{N}$, of $N$ in $\olper{R}$ is a free $A$-submodule of $\olper{R}$ of maximal rank.  In accordance with lemma \ref{anntors}, we may choose $b \in \ann{A}{\olper{R}/\ulper{N}}$ such that $$ c^m - b \in \mf{m}_A^H,$$ and set, as in section \ref{subSecper},  $$\disc{A}{\olper{R}} := \mc{D}\left(\mathrm{Tr}_{\unideal{f+\epsilon}}, \ul{r}\right).$$  Define $$g' := b\disc{A}{\olper{R}},$$ and note that by corollary \ref{Discper}, we have,
\begin{align*}
	g - g' = c^m\disc{A}{\ol{R}} -b\disc{A}{\olper{R}} \in \mf{m}_A^{H-2t'} \subset \mf{m}_A\left( gA + \unideal{h}^{\fbp{p^{e_0}}}A \right),	
\end{align*}
since $H$ was chosen to satisfy condition (b), above.  Applying lemma \ref{naklemma}, this time in $A$, we see that there is an identity of ideals, 
\begin{align} \label{ghideals} 
	gA + \unideal{h}^{\fbp{p^{e_0}}}A = g'A + \unideal{h}^{\fbp{p^{e_0}}}A \, .
\end{align}
Noting that we also have,
\begin{align*}
	\epsilon_1, \dots, \epsilon_c \in \mf{m}_R^T \subset \mf{m}_R \left(I + \mf{m}_A^HR\right) \subset \mf{m}_R \left(I + \left(gA + \unideal{h}^{\fbp{p^{e_0}}}A\right)R\right),
\end{align*}
 it follows from equality \ref{ghideals}, and another application of lemma \ref{naklemma}, that  
\begin{align} 
	\unideal{f+\epsilon} + \left(g'A + \unideal{h}^{\fbp{p^{e_0}}}A\right)R = \unideal{f+\epsilon} + \left(gA + \unideal{h}^{\fbp{p^{e_0}}}A\right)R = I + \left(gA + \unideal{h}^{\fbp{p^{e_0}}}A\right)R \, . \label{ghIideals}
\end{align}
\indt Now, the above considerations certainly imply that $g' = b \disc{A}{\olper{R}} \neq 0,$ so the module finite extension $A \xhookrightarrow{} \olper{R}$ is generically \'etale by lemma \ref{discetale}, and therefore lemma \ref{smircor23} provides exact sequences of $\olper{R}$-modules
\begin{align} 
	A^{1/p} \otimes_A \olper{R} \to F_{*}\olper{R} \to Q_1 &\to 0 \label{exactQ1} \\
	F_{*}\olper{R} \to A^{1/p} \otimes_A \olper{R} \to Q_2 &\to 0  \label{exactQ2}
\end{align}
with $g'Q_1 = g'Q_2 = 0.$ \\
\indt $A$ is a regular local ring of dimension $a$, so, by Kunz's theorem, $A^{1/p}$ is a free $A$-module of rank $\kpdeg p^a$ --- recall that $\kpdeg < \infty.$  Incorporating this, tensoring these exact sequences with $-\otimes_R \dfrac{R}{J^{\fbp{p^e}}},$ and taking lengths, we get,
\begin{align*}
	-\len{R}{\dfrac{Q_2}{J^{\fbp{p^e}}Q_2}} \le \len{R}{F_{*}\olper{R} \otimes_R \dfrac{R}{J^{\fbp{p^e}}}} - \kpdeg p^a \len{R}{\dfrac{R}{\unideal{f+\epsilon} + J^{\fbp{p^e}}}} \le \len{R}{\dfrac{Q_1}{J^{\fbp{p^e}}Q_1}} \, .
\end{align*}
There is an $R$-module isomorphism, $$F_{*}\olper{R} \otimes_R \dfrac{R}{J^{\fbp{p^e}}} \cong \dfrac{F_{*}R}{F_{*}\left(\unideal{f+\epsilon} + J^{\fbp{p^{e+1}}}\right)},$$ and so the $R$-length can be computed as follows
\begin{align*}
	\len{R}{\dfrac{F_{*}R}{F_{*}\left(\unideal{f+\epsilon} + J^{\fbp{p^{e+1}}}\right)}} = \kpdeg \len{F_{*}R}{\dfrac{F_{*}R}{F_{*}\left(\unideal{f+\epsilon} + J^{\fbp{p^{e+1}}}\right)}} = \kpdeg \len{R}{\dfrac{R}{\unideal{f+\epsilon}+ J^{\fbp{p^{e+1}}}}} \, .
\end{align*}	
Therefore our inequality has become,
\begin{align}
	-\len{R}{\dfrac{Q_2}{J^{\fbp{p^e}}Q_2}} \le \kpdeg \left( \len{R}{\dfrac{R}{\unideal{f+\epsilon}+ J^{\fbp{p^{e+1}}}}} - p^a \len{R}{\dfrac{R}{\unideal{f+\epsilon} + J^{\fbp{p^e}}}} \right) \le \len{R}{\dfrac{Q_1}{J^{\fbp{p^e}}Q_1}} \, . \label{ineqQ}
\end{align}
Our goal is to bound the lengths appearing on the far sides of this. \\
\indt  From the exact sequences (\ref{exactQ1}) and (\ref{exactQ2}), we have surjections,
\begin{align*}
	F_{*}\olper{R} &\twoheadrightarrow Q_1 \\
	A^{1/p} \otimes_A \olper{R} &\twoheadrightarrow Q_2 
\end{align*}
Recalling that both $Q_1$ and $Q_2$ are annihilated by $g'$, and utilizing the isomorphism from Kunz's theorem, $A^{1/p} \cong A^{\oplus \kpdeg p^a},$ there are surjections
\begin{align*}
	\left(\dfrac{R}{\unideal{f+\epsilon} + g'R}\right)^{\oplus v_1} \twoheadrightarrow Q_1 \,\,\,\,\,\,\, \text{   and    } \,\,\,\,\,\,\,\, 	\left(\dfrac{R}{\unideal{f+\epsilon} + g'R}\right)^{\oplus v_2} \twoheadrightarrow Q_2
\end{align*}
where $v_1 = \mu_{\olper{R}}\left(F_{*}\olper{R}\right)$ and $v_2 = \kpdeg p^a.$ \\
\indt Tensoring with $-\otimes_R \dfrac{R}{J^{\fbp{p^e}}}$ and taking lengths, we get inequalities
\begin{align}
	\len{R}{\dfrac{Q_i}{J^{\fbp{p^e}}Q_i}} \le v_i \len{R}{\dfrac{R}{\unideal{f+\epsilon} + g'R + J^{\fbp{p^e}}}} \label{Qiineq}
\end{align}
for $i = 1, 2.$ \\
\indt The $v_2$ term here is already independent of the choice of $\unideal{f+\epsilon};$ so we turn our attention to finding a uniform bound for the remaining terms in (\ref{Qiineq}).   For the $v_1$ term we have,
\begin{align*}
	v_1 = \mu_{\olper{R}}\left(F_{*}\olper{R}\right) = \len{R}{\dfrac{F_{*}R}{F_{*}\left(\unideal{f+\epsilon}+\mf{m}_R^{\fbp{p}}\right)}} = \kpdeg \len{R}{\dfrac{R}{\unideal{f+\epsilon}+\mf{m}_R^{\fbp{p}}}} \, .
\end{align*}
There is an inclusion $\mf{m}_A^{\fbp{p}}R \subset \mf{m}_R^{\fbp{p}},$ and so we continue this string of inequalities,
\begin{align*}
	v_1  = \kpdeg \len{R}{\dfrac{R}{\unideal{f+\epsilon}+\mf{m}_R^{\fbp{p}}}} \le \kpdeg \len{R}{\dfrac{R}{\unideal{f+\epsilon}+\mf{m}_A^{\fbp{p}}R}} \le \kpdeg p^a \len{R}{\dfrac{R}{\unideal{f+\epsilon} + \mf{m}_A R}} \, .
\end{align*}
where the last inequality comes from lemma \ref{filtration}, and the fact that $\mf{m}_A$ is generated by $a$ elements.  By lemma \ref{naklemma}, we have $\unideal{f+\epsilon} + \mf{m}_AR = I + \mf{m}_A R,$ and so we have shown that
\begin{align*}
	v_1 \le \kpdeg p^a \len{R}{\dfrac{R}{I + \mf{m}_A R}} = \kpdeg p^a m,
\end{align*}
where, as above, $m = \mu_A(\ol{R}).$ \\
\indt Now we bound the first term in \ref{Qiineq}.  By assumption, there is an inclusion, $\mf{m}_R^{\fbp{p^{e_0}}} \subset J,$ and, certainly $\unideal{h}R \subset \mf{m}_R,$ so we have
\begin{align*}
	\len{R}{\dfrac{R}{\unideal{f+\epsilon} + g'R + J^{\fbp{p^e}}}} \le \len{R}{\dfrac{R}{\unideal{f+\epsilon} + g'R + \mf{m}_R^{\fbp{p^{e_0+e}}}}} \le \len{R}{\dfrac{R}{\unideal{f+\epsilon} + g'R + \unideal{h}^{\fbp{p^{e_0+e}}}}} \, .
\end{align*}
The ideal $\unideal{h}^{\fbp{p^{e_0}}}R = (h_1^{p^{e_0}}, \dots, h_{a-1}^{p^{e_0}})R$ is generated by $a-1$ elements, and so applying lemma \ref{filtration} to the last term above gives
\begin{align*}
	\len{R}{\dfrac{R}{\unideal{f+\epsilon} + g'R + J^{\fbp{p^e}}}} \le p^{e(a-1)} \len{R}{\dfrac{R}{\unideal{f+\epsilon} + g'R + \unideal{h}^{\fbp{p^{e_0}}}}} \, .
\end{align*}
Moreover, equality (\ref{ghIideals}), established earlier, says that $$\unideal{f+\epsilon} + g'R + \unideal{h}^{\fbp{p^{e_0}}}R = I + gR + \unideal{h}^{\fbp{p^{e_0}}}R.$$  Putting all of this together, we get
\begin{align*}
	\len{R}{\dfrac{R}{\unideal{f+\epsilon} + g'R + J^{\fbp{p^e}}}} \le p^{e(a-1)} \len{R}{\dfrac{R}{I + gR + \unideal{h}^{\fbp{p^{e_0}}}}}.
\end{align*}
\indt Now, set
\begin{align*}
	C = C_{e_0}:= p^a m \len{R}{\dfrac{R}{I + gR + \unideal{h}^{\fbp{p^{e_0}}}}}, 
\end{align*}
and recall that $v_2 = \kpdeg p^a \le \kpdeg p^a m.$  We have shown
\begin{align*}
\len{R}{\dfrac{Q_i}{J^{\fbp{p^e}}Q_i}} \le v_i \len{R}{\dfrac{R}{\unideal{f+\epsilon} + g'R + J^{\fbp{p^e}}}} \le \kpdeg C p^{e(a-1)},
\end{align*}
for $i = 1, 2.$  Put this on both sides of the compound inequality, (\ref{ineqQ}) above, and divide by $\kpdeg.$  We arrive at the desired inequality,
\begin{align*}
	\Biggl|\len{R}{\dfrac{R}{\unideal{f+\epsilon} + J^{\fbp{p^{e+1}}}}} - p^a\len{R}{\dfrac{R}{\unideal{f+\epsilon} + J^{\fbp{p^{e}}}}} \Biggr| \le Cp^{e(a-1)} \, .
\end{align*}
\end{proof}
\begin{cor} \label{HkGrowth}
Let $(R, \mf{m}_R, \kappa)$ be an $F$-finite local ring, and suppose that $I \subset R$ is an ideal such that $a = \dim R/I \ge 1.$  Assume that conditions (i), (ii), and (iii) of proposition \ref{UniformBounds} hold. Given any $e_0 \ge 0,$ let $C = C_{e_0}$ denote the corresponding constant, and $T \in \mathbb{N}$ the corresponding integer shown to exist in proposition \ref{UniformBounds}, above.  Then, for all $J \subset R$ with $\mf{m}_R^{\fbp{p^e}} \subset J,$ and all $e \ge 1,$
\begin{align*}
	\Biggl|\ehk{J\olper{R}} - \dfrac{1}{p^{ea}}\len{R}{\dfrac{R}{\unideal{f+\epsilon} + J^{\fbp{p^{e}}}}} \Biggr| \le Cp^{-e} 
\end{align*}
for all minimal generators $(f_1, \dots, f_c) = I,$ and every $\epsilon_1, \dots, \epsilon_c \in \mf{m}_R^T.$
\end{cor}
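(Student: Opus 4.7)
The plan is to derive the corollary from Proposition \ref{UniformBounds} by a standard telescoping argument, together with the observation that the Hilbert--Kunz multiplicity of $J\olper{R}$ is the limit of the normalized lengths appearing on the right-hand side.

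First, I would verify that for $T$ as produced by Proposition \ref{UniformBounds}, one has $\dim \olper{R} = a$ for all the perturbations under consideration. This follows from the remark after Theorem \ref{freeper} (the module-finite extension $A \hookrightarrow \olper{R}$ persists, so the perturbed quotient has the same dimension as $\ol{R}$), possibly after enlarging $T$. Then, with $\olper{R}$ viewed as an $R$-module, the identities $(J\olper{R})^{[p^n]} = J^{[p^n]}\olper{R}$ and $\olper{R}/J^{[p^n]}\olper{R} \cong R/(\unideal{f+\epsilon} + J^{[p^n]})$, combined with the fact that $R$-length equals $\olper{R}$-length for $\olper{R}$-modules of finite length, yield
\begin{align*}
	\ehk{J\olper{R}} = \lim_{n \to \infty} \frac{1}{p^{na}} \len{R}{\frac{R}{\unideal{f+\epsilon} + J^{[p^n]}}}.
\end{align*}

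Next, set $a_n := \frac{1}{p^{na}} \len{R}{\frac{R}{\unideal{f+\epsilon} + J^{[p^n]}}}$. Dividing the bound from Proposition \ref{UniformBounds} by $p^{(n+1)a}$ gives the uniform one-step estimate
\begin{align*}
	|a_{n+1} - a_n| \le \frac{C p^{n(a-1)}}{p^{(n+1)a}} = \frac{C}{p^{n+a}}
\end{align*}
valid for all $n \ge 1$, all minimal generators $(f_1,\dots,f_c) = I$, and all $\epsilon_i \in \mathfrak{m}_R^T$. Because the bound depends only on $n$ (and not on the particular perturbation), the sequence $(a_n)$ is Cauchy with a uniform modulus, which reconfirms convergence to $\ehk{J\olper{R}}$.

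Finally, I would telescope and sum the geometric series:
\begin{align*}
	\bigl| \ehk{J\olper{R}} - a_e \bigr|
	= \Bigl| \sum_{n=e}^{\infty} (a_{n+1} - a_n) \Bigr|
	\le \sum_{n=e}^{\infty} \frac{C}{p^{n+a}}
	= \frac{C}{p^{e+a}} \cdot \frac{p}{p-1}
	\le \frac{C'}{p^{e}},
\end{align*}
where $C'$ absorbs the constant factor $p/((p-1)p^a)$ (possibly replacing $C$ by a slightly enlarged constant, still independent of the choice of $\unideal{f+\epsilon}$, $J$, and $e$). There is no real obstacle here: the substantive content was already packaged into Proposition \ref{UniformBounds}, and what remains is only checking that the normalized length sequence converges to the Hilbert--Kunz multiplicity (for which the dimension equality $\dim \olper{R} = a$ is the only nontrivial point) and summing a geometric series.
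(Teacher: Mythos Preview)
Your approach is essentially the same as the paper's: telescope the one-step bound from Proposition \ref{UniformBounds} and sum the resulting geometric series. The paper works with the un-normalized lengths $\ell_k$ and divides by $p^{(e+e')a}$ at the end before letting $e'\to\infty$, whereas you normalize first and sum directly; these are cosmetically different but identical in content. One small point: the corollary asserts the bound with the \emph{same} constant $C$ produced by Proposition \ref{UniformBounds}, not an enlarged $C'$. Your geometric-series factor is $\dfrac{p}{(p-1)p^{a}} = \dfrac{1}{(p-1)p^{a-1}} \le 1$ since $a \ge 1$ and $p \ge 2$, so in fact $C' \le C$ and no enlargement is needed; you should say this explicitly rather than hedge.
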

\begin{proof}
Fix $e \ge 1,$ and for any $k\ge 0$ let $\ell_{k}$ denote the length
\begin{align*}
	\ell_k : = \len{R}{\dfrac{R}{\unideal{f+\epsilon} + J^{\fbp{p^{k}}}}} \, .
\end{align*}
Then, for any positive integer $e' \ge 1,$ we have
\begin{align*}
	\Biggl|p^{e'a}&\len{R}{\dfrac{R}{\unideal{f+\epsilon} + J^{\fbp{p^{e}}}}} - \len{R}{\dfrac{R}{\unideal{f+\epsilon} + J^{\fbp{p^{e+e'}}}}} \Biggr| \\
	&= \left|p^{e'a}\ell_e - \ell_{e+e'} \right| \\
	&= \left|p^{e'a}\ell_e - p^{(e'-1)a}\ell_{e+1} + p^{(e'-1)a}\ell_{e+1} - p^{(e'-2)a}\ell_{e+2} + \dots p^a \ell_{e+e'-1} - \ell_{e+e'} \right| \\
	&\le p^{(e'-1)a} \Bigl|p^a \ell_e - \ell_{e+1}\Bigr| + p^{(e'-2)a}\Bigl|p^a \ell_{e+1} - \ell_{e+2}\Bigr| + \dots + \Bigl|p^a \ell_{e+e'-1} - \ell_{e+e'}\Bigr| \, .
\end{align*}
Proposition \ref{UniformBounds} gives a bound on each of these terms,
\begin{align*}
	\left|p^a\ell_{k} - \ell_{k+1}\right| \le Cp^{k(a-1)},
\end{align*}
and therefore we have,
\begin{align*}
	\Biggl|p^{e'a}&\len{R}{\dfrac{R}{\unideal{f+\epsilon} + J^{\fbp{p^{e}}}}} - \len{R}{\dfrac{R}{\unideal{f+\epsilon} + J^{\fbp{p^{e+e'}}}}} \Biggr| \\
		&\le  Cp^{(e'-1)a + e(a-1)} + Cp^{(e'-2)+(e+1)(a-1)} + \dots + Cp^{(e+e'-1)(a-1)} \\
		&\le \left(\dfrac{1-\dfrac{1}{p^{e'}}}{(p-1)p^{a-1}}\right)Cp^{-e}p^{(e+e')a} \\
		&\le Cp^{-e}p^{(e+e')a} \, .
\end{align*}
\indt Dividing by $p^{(e+e')a}$ we conclude that, for every $e' \ge 0,$ 
\begin{align*}
	\Biggl|\dfrac{1}{p^{ea}}\len{R}{\dfrac{R}{\unideal{f+\epsilon} + J^{\fbp{p^{e}}}}} - \dfrac{1}{p^{(e+e')a}}\len{R}{\dfrac{R}{\unideal{f+\epsilon} + J^{\fbp{p^{e+e'}}}}} \Biggr| \le Cp^{-e} \, .
\end{align*}
Now, taking the limit as $e' \to \infty$ establishes the claim.
\end{proof}
\indt This brings us to the extension of theorem \ref{IllyaThomasHK}. 
\begin{thm} \label{HKThm}
	Let $(R, \mf{m}_R, \kappa)$ be an $F$-finite local ring, and suppose that $I \subset R$ is an ideal.  If $\dim R/I \ge 1,$ then assume that conditions (i), (ii), and (iii) of proposition \ref{UniformBounds} hold.  Let $J \subset R$ be an $\mf{m}_R$-primary ideal. \\
\indt Then, for any $\delta > 0,$ there is a $T \in \mathbb{N}$ such that for all minimal generators $(f_1, \dots, f_c) = I$ and all $\epsilon_1, \dots, \epsilon_c \in \mf{m}_R^T$
\begin{align*}
	\Biggl|\ehk{J\ol{R}} - \ehk{J\olper{R}} \Biggr| < \delta \, .
\end{align*}  
\end{thm}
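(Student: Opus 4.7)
The plan is to reduce the theorem to an application of Corollary \ref{HkGrowth} via a triangle-inequality sandwich, using Lemma \ref{naklemma} to reconcile the perturbed and unperturbed lengths at a single fixed power of the Frobenius. The case $\dim R/I = 0$ is handled immediately: then $I$ is $\mf{m}_R$-primary, so for $T$ with $\mf{m}_R^T \subset \mf{m}_R I$, Corollary \ref{nakcordim0} gives $\ol{R} = \olper{R}$ as rings, and the two Hilbert-Kunz multiplicities are literally equal.

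Assume now $a = \dim R/I \ge 1$. Since $J$ is $\mf{m}_R$-primary, fix $e_0 \ge 0$ with $\mf{m}_R^{[p^{e_0}]} \subset J$. Proposition \ref{UniformBounds} and Corollary \ref{HkGrowth} then produce a constant $C = C_{e_0}$ and an integer $T_0$ such that, for every minimal $(f_1,\dots,f_c) = I$, every $\epsilon_i \in \mf{m}_R^{T_0}$, and every $e \ge 1$,
\begin{align*}
   \Biggl|\ehk{J\olper{R}} - \dfrac{1}{p^{ea}}\len{R}{R/(\unideal{f+\epsilon} + J^{[p^e]})}\Biggr| \le Cp^{-e}.
\end{align*}
Applying the same inequality with $\ul{\epsilon} = 0$ (a legal choice of perturbation) gives the analogous estimate for $\ehk{J\ol{R}}$ against $\tfrac{1}{p^{ea}}\len{R}{R/(I + J^{[p^e]})}$. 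Given $\delta > 0$, now choose $e \ge 1$ large enough that $2Cp^{-e} < \delta$, and fix this $e$ for the rest of the argument.

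With $e$ fixed, the ideal $I + J^{[p^e]}$ is $\mf{m}_R$-primary, so some power of $\mf{m}_R$ lies in $\mf{m}_R(I + J^{[p^e]})$. Enlarge $T \ge T_0$ so that $\mf{m}_R^T \subset \mf{m}_R(I + J^{[p^e]})$. For any $\epsilon_1,\dots,\epsilon_c \in \mf{m}_R^T$ and any minimal generators $(f_1,\dots,f_c) = I$, Lemma \ref{naklemma} (applied to the pair $I,\ J^{[p^e]}$) yields the ideal identity
\begin{align*}
   \unideal{f+\epsilon} + J^{[p^e]} = I + J^{[p^e]},
\end{align*}
so in particular $\len{R}{R/(\unideal{f+\epsilon} + J^{[p^e]})} = \len{R}{R/(I + J^{[p^e]})}$. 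Combining the two instances of Corollary \ref{HkGrowth} with this length identity via the triangle inequality yields
\begin{align*}
   \Bigl|\ehk{J\ol{R}} - \ehk{J\olper{R}}\Bigr| \le 2Cp^{-e} < \delta,
\end{align*}
which is the desired conclusion.

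There is no serious obstacle: the heavy lifting (uniform control of the growth of the Hilbert-Kunz functions across all sufficiently small perturbations) is already packaged in Proposition \ref{UniformBounds} and Corollary \ref{HkGrowth}, and the only remaining subtlety is that the uniform $T$ produced there depends only on $e_0$, not on the later choice of $e$, so we are free to enlarge $T$ afterwards to absorb the Nakayama step at level $e$ without disturbing the growth estimate.
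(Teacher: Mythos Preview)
Your proof is correct and follows essentially the same route as the paper's own argument: handle the zero-dimensional case via Corollary \ref{nakcordim0}, then in positive dimension invoke Corollary \ref{HkGrowth} (for both the perturbed and unperturbed quotients), fix $e$ so that $2Cp^{-e} < \delta$, enlarge $T$ via Lemma \ref{naklemma} to force $\unideal{f+\epsilon} + J^{[p^e]} = I + J^{[p^e]}$, and conclude by the triangle inequality. Your exposition is in fact slightly more careful in making explicit the choice of $e_0$ with $\mf{m}_R^{[p^{e_0}]} \subset J$ and in noting that the unperturbed estimate is the special case $\ul{\epsilon} = 0$.
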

\begin{proof}
If $\dim R/I = 0,$ then $I$ is $\mf{m}_R$-primary and there is a $T$ such that $\ol{R} = \olper{R}$ for all minimal generators $\unideal{f} = I$ and all $\epsilon_1, \dots, \epsilon_c \in \mf{m}_R^T.$ \\
\indt Now assume that $\dim R/I \ge 1,$ and the conditions of proposition \ref{UniformBounds} are satisfied.  By corollary \ref{HkGrowth} and proposition \ref{UniformBounds} there is a constant $C$ (depending on $J$), and $T$ such that, for all $e,$ all minimal generators $\unideal{f} = I$ and all $\epsilon_1, \dots, \epsilon_c \in \mf{m}_R^T,$ 
\begin{align*}
	\Biggl|\ehk{J\olper{R}} - \dfrac{1}{p^{ea}}\len{R}{\dfrac{R}{\unideal{f+\epsilon} + J^{\fbp{p^{e}}}}} \Biggr| \,\, + \,\, \Biggl|\ehk{J\ol{R}} - \dfrac{1}{p^{ea}}\len{R}{\dfrac{R}{I + J^{\fbp{p^{e}}}}} \Biggr| \le 2Cp^{-e} \, .
\end{align*} 
Fix an $e$ such that this bound satisfies $2Cp^{-e} < \delta.$  Hence, $J^{\fbp{p^e}}$ is a fixed $\mf{m}_R$-primary ideal and by lemma \ref{naklemma}, after possibly increasing $T$, we may assume that $$ \unideal{f+\epsilon} + J^{\fbp{p^e}} = I + J^{\fbp{p^e}},$$ for all the corresponding $\unideal{f+\epsilon}.$ \\
\indt Putting this together, we have,
\begin{align*}
	\Biggl|\ehk{J\olper{R}}& - \ehk{J\ol{R}}  \Biggr| \\
	&= \Biggl|\ehk{J\olper{R}} -  \dfrac{1}{p^{ea}}\len{R}{\dfrac{R}{\unideal{f+\epsilon} + J^{\fbp{p^e}}}} + \dfrac{1}{p^{ea}}\len{R}{\dfrac{R}{I + J^{\fbp{p^e}}}} - \ehk{J\ol{R}} \Biggr|	\\
	&\le \Biggl|\ehk{J\olper{R}} - \dfrac{1}{p^{ea}}\len{R}{\dfrac{R}{\unideal{f+\epsilon} + J^{\fbp{p^{e}}}}} \Biggr| \,\, + \,\, \Biggl|\ehk{J\ol{R}} - \dfrac{1}{p^{ea}}\len{R}{\dfrac{R}{I + J^{\fbp{p^{e}}}}} \Biggr| \\
	&< \delta \, .
\end{align*} 
\end{proof}
\indt Before moving on to discuss some interesting examples, we mention a quick application of our results. When theorem \ref{HKThm} applies, the Hilbert-Kunz multiplicity of $R/I$ can be realized as the limit of Hilbert-Kunz multiplicities of finite radical extensions of $R,$ obtained by joining the roots of minimal generators of $I$.
\begin{thm} \label{HKlimit}
	Let $(R, \mf{m}_R, \kappa)$ be a reduced, $F$-finite local ring, and suppose that $I \subset R$ is an ideal.  Assume that the conditions of theorem \ref{HKThm} are satisfied, and let $$(f_1, \dots, f_c) = I$$ be any minimal generating set.  Then,
	\begin{align*}
		\lim_{n_1 \to \infty, \dots, n_c \to \infty} \ehk{R[f_1^{1/n_1}, \dots, f_c^{1/n_c}]} = \ehk{R/I} \, .
	\end{align*}
\end{thm}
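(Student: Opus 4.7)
The plan is to realize each ring $R[f_1^{1/n_1}, \dots, f_c^{1/n_c}]$ as an $\mf{m}$-adic perturbation of a natural quotient inside an enlargement of $R$, so that the limit reduces to a direct application of Theorem \ref{HKThm}. The auxiliary ring will be $S := R[T_1, \dots, T_c]_{(\mf{m}_R, T_1, \dots, T_c)}$, a Noetherian local ring with $R \hookrightarrow S$ faithfully flat, maximal ideal $\mf{m}_S = (\mf{m}_R, T_1, \dots, T_c)S$, and residue field $\kappa$. The ideal $IS$ is minimally generated by $f_1, \dots, f_c$.

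The first step would be to verify that $(S, IS)$ satisfies the hypotheses of Theorem \ref{HKThm}. The ring $S$ is reduced and $F$-finite because both properties are preserved by polynomial extensions and localizations. Flatness of $R \to S$ gives $\hgy{1}{IS; S} \cong \hgy{1}{I; R} \otimes_R S$, and computing supports shows
\[
\dim_{S/IS} \hgy{1}{IS; S} = \dim_{R/I} \hgy{1}{I; R} + c < \dim R/I + c = \dim S/IS,
\]
so condition (ii) carries over. Conditions (i) and (iii) transfer from the identification $\widehat{S}/IS\widehat{S} \cong (\widehat{R}/I\widehat{R})[[T_1, \dots, T_c]]$, together with the standard facts that adjoining formal variables preserves equidimensionality and does not change the relative dimension of the nilradical.

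Second, I would compute $\ehk{S/IS}$. Since $S/IS \cong (R/I)[T_1, \dots, T_c]_{(\mf{m}_{R/I}, T_1, \dots, T_c)}$, and its maximal ideal has bracket power $(\mf{m}_{R/I}^{\fbp{p^e}}, T_1^{p^e}, \dots, T_c^{p^e})$, a short calculation (using lemma \ref{filtration}) gives an extra factor of $p^{ec}$ that is exactly cancelled by the dimension increase, yielding $\ehk{S/IS} = \ehk{R/I}$. Third, taking $\epsilon_i := -T_i^{n_i} \in \mf{m}_S^{n_i}$ produces an $\mf{m}_S$-adic perturbation of $IS$ of order $\min_i n_i$, and the identification
\[
S/(f_1 - T_1^{n_1}, \dots, f_c - T_c^{n_c}) \;\cong\; R[f_1^{1/n_1}, \dots, f_c^{1/n_c}]
\]
lets me apply Theorem \ref{HKThm} to $(S, IS)$ with $J = \mf{m}_S$: for every $\delta > 0$, there is an integer $T$ so that $\min_i n_i \ge T$ forces $|\ehk{R/I} - \ehk{R[f_1^{1/n_1}, \dots, f_c^{1/n_c}]}| < \delta$, which yields the desired limit.

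The main obstacle is the verification that the hypotheses of Theorem \ref{HKThm} really do transfer from $(R, I)$ to $(S, IS)$; although each individual ingredient (flat base change for Koszul homology, behavior of equidimensionality and the nilradical under polynomial extensions, $F$-finiteness of $S$) is standard, several pieces must be combined carefully. Once this is in place, the rest of the argument is essentially a direct unpacking of Theorem \ref{HKThm} applied in $S$. The edge case $\dim R/I = 0$ (where $I$ is $\mf{m}_R$-primary) should be treated separately, but there the ring $R[f_1^{1/n_1}, \dots, f_c^{1/n_c}]$ is Artinian and the statement is an easy consequence of corollary \ref{nakcordim0}.
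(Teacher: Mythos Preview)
Your approach is essentially the same as the paper's: both pass to an auxiliary ring over which $R[f_1^{1/n_1},\dots,f_c^{1/n_c}]$ becomes a perturbation of $I$, verify that the hypotheses of Theorem~\ref{HKThm} transfer, and then invoke that theorem; the only cosmetic difference is that you use the localized polynomial ring $S=R[T_1,\dots,T_c]_{(\mf{m}_R,\ul{T})}$ while the paper completes $R$ and uses the power series ring $R'=R[[X_1,\dots,X_c]]$ (and cites Kunz's result rather than computing $\ehk{R'/IR'}$ directly).  One small slip in your edge-case remark: when $\dim R/I=0$ but $\dim R>0$, the extension $R[f_1^{1/n_1},\dots,f_c^{1/n_c}]$ is finite free over $R$ and hence has the same (positive) dimension as $R$, so it is not Artinian; the paper does not treat this boundary case separately either.
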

\begin{proof}
	Assume $R$ is complete.  The extension $R \xhookrightarrow{} R',$ with $R' = R[[X_1, \dots, X_c]]$ the power series ring in $X_1, \dots, X_c$, is faithfully flat and conditions (i), (ii) and (iii) continue to hold for $IR' \subset R'.$  Therefore, theorem \ref{HKThm} gives
\begin{align*}
	\lim_{n_1 \to \infty, \dots, n_c \to \infty} \ehk{R[f_1^{1/n_1}, \dots, f_c^{1/n_c}]} = \lim_{n_1 \to \infty, \dots, n_c \to \infty} \ehk{R'/(f_1-X_1^{n_1}, \dots, f_c-X_c^{n_c})} = \ehk{R'/IR'}.
\end{align*}
\indt The induced extension, $$R/I \xhookrightarrow{} R'/IR' \cong \left(R/I\right)[[X_1, \dots, X_c]]$$ is faithfully flat and the fiber, $\kappa[[X_1, \dots, X_c]],$ is regular.  By a standard result (see proposition 3.9 of \cite{kunz1976noetherian}), we have
\begin{align*}
	\ehk{R'/IR'} = \ehk{R/I}.
\end{align*}
\end{proof}
\section{Examples and Further Directions}
\indt We finish with some interesting examples and questions that we would like to address in future research.
\subsection{Examples} \label{examples}
\indt Given $I = \unideal{f} \subset R,$ such that the conditions of theorem \ref{HKThm} are satisfied, and an $\mf{m}_R$-primary ideal $J \subset R$, it is natural to wonder if the Hilbert-Kunz multiplicities $\ehk{\olper{R}}$ are actually equal for $T \gg 0.$  The following example, which features as example 4.1 in \cite{polstra_smirnov_2018}, shows we cannot expect this to happen in general.
\begin{emple} \label{notiso}
	(Example 4.1 of \cite{polstra_smirnov_2018})  Let $f = xy \in R = k[[x, y, t]],$ with $k$ a field of characteristic $p > 0.$ The associativity formula gives
	\begin{align*}
		\ehk{R/(f)} = \ehk{R/(x)} \len{R}{\left(R/(xy)\right)_{(x)}} + \ehk{R/(y)} \len{R}{\left(R/(xy)\right)_{(y)}} = 2,
	\end{align*}
	while it is known that (see theorem 3.1 of \cite{conca1996hilbert}, or example 3.18 of \cite{huneke2014hilbert}),
	\begin{align*}
	 	\ehk{R/(f+t^n)} = 2 - \dfrac{1}{n} \, .
	\end{align*} 
\end{emple}
\indt It is interesting to observe what goes wrong in examples where the multiplicities behave badly with respect to $\mf{m}$-adic perturbations.
\begin{emple} \label{dimfail1}
	 (Example 4.3 of \cite{polstra_smirnov_2018}) Let $k$ be a field of positive characteristic, and set $R = k[[x, y, t]]/(xy, xt).$  The minimal primes of $R$ are $(x)$ and $(y, t)$ --- note that $R$ is not equidimensional.  $R/(y)$ has dimension $1$, so we have
	 \begin{align*}
	 	\ehk{R/(y)} = e\left(R/(y)\right) = e\left(k[[x, t]]/(xt)\right) = 2, 
	 \end{align*} 
	 while, for every $n \ge 1,$ $(x)$ is the only minimal prime of $R/(y+x^n) \cong k[[x, t]]/(x^{n+1}, xt),$ so that
	 \begin{align*}
	 	\ehk{R/(y+x^n)} = e\left(R/(y+x^n)\right) = e\left(R/(x, y)\right) \len{R}{\left(R/(y+x^n)\right)_{(x)}} = e\left(k[[t]]\right) \len{R}{\left(k[[x, t]]/(x^{n+1}, xt)\right)_{(x)}} = 1
	 \end{align*}
\indt Observe that $\hgy{1}{y ; R} = \col{0}{y}{R} = (x),$ so that $\ann{R}{\hgy{1}{y; R}} = (y, t).$  In particular, the condition on the dimension of $\hgy{1}{y; R}$ appearing in theorems \ref{HSper} and \ref{HKThm} is not satisfied: 
\begin{align*}
	\dim R/\ann{R}{\hgy{1}{y; R}} = \dim R/(y, t) = 1 = \dim R/(y) \, .
\end{align*}
\indt We should point out that, in this example, $(y,t)$ and $(x, y)$ are the minimal primes of $(y),$ and $$R_{(y,t)} \cong \dfrac{k[[x, y, t]]_{(y, t)}}{(y,t)k[[x, y, t]]_{(y, t)}}$$ and $$R_{(x,y)} \cong \dfrac{k[[x, y, t]]_{(x, y)}}{(x)k[[x, y, t]]_{(x, y)}}$$ are both certainly Cohen-Macaulay.  However, while $y$ is a parameter on $R$, $y$ clearly does not form a regular sequence on $R_{(y,t)}.$   \par 
\indt If instead we consider perturbations of $(x) \subset R$ the situation is quite pathological.  Unlike $y$, $x$ is not a parameter on $R,$ so the perturbations may not even have the same dimension.  It is not difficult to verify directly that the condition on $\mathbf{H}_1$ fails here too: $\hgy{1}{x ; R} = \col{0}{x}{R} = (y,t),$ so that $\ann{R}{\hgy{1}{x; R}} = (x),$ and $$ R/\ann{R}{\hgy{1}{x; R}} = R/(x).$$  In this case, $R/(x) \cong k[[y, t]]$ is regular (of dimension $2$) so 
\begin{align*}
	e \left(R/(x)\right) = \ehk{R/(x)} = 1,
\end{align*} 
while an easy calculation shows that, for any $n,$ $$R/(x+t^n) \cong \dfrac{k[[y, t]]}{(t^{n+1}, t^ny)}.$$  Each of these $R/(x+t^n)$ is $1$ dimensional with a unique minimal prime, $(t),$ and we get 
\begin{align*}
	\ehk{R/(x+t^n)} = e\left(R/(x+t^n)\right) &= e \left(\dfrac{k[[y,t]]}{(t)}\right) \len{R}{\left(\dfrac{k[[y, t]]}{(t^{n+1}, t^ny)}\right)_{(t)}} \\
	&= \len{R}{\dfrac{k[[y, t]]_{(t)}}{(t^n)k[[y, t]]_{(t)}}} = n \, .
\end{align*}
\end{emple} \\
\begin{emple} \label{dimfail2}
 In proposition 4.4 of \cite{polstra_smirnov_2018} the authors work out the following example, reportedly due to Hochster.
\begin{prop} (Proposition 4.4 of \cite{polstra_smirnov_2018})  Let $k$ be a field of characteristic $p > 0$ and let $S$ be the following subring of the power series ring in $x, y$ and $z$ over $k$: $$S := k[[x^3, x^2y, y^3, y^2z, z^3, z^2x]] \subset k[[x, y, z]] \, .$$ \\
Let $R := S/(x^3).$ \\
\indt Then we have,
\begin{itemize}
\item[(i)] $\ehk{R/(y^3)} = e \left(R/(y^3)\right) = 11$;
\item[(ii)] For every $k \not \equiv 1 \mod 3,$ $$\ehk{R/(y^3+z^{3k})} = e \left(R/(y^3+z^{3k})\right) \neq 11 \, .$$
\end{itemize}
\end{prop}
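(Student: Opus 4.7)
My plan is to prove the proposition by direct computation, since the point of the example is precisely that the hypothesis on $\hgy{1}{I;R}$ in theorem \ref{HKThm} \emph{fails} for $I=(y^3) \subset R$, so the general continuity results of this paper are unavailable and the discrepancy must be exhibited by explicit calculation. The setup proceeds by mapping a power series ring $k[[u_1,\ldots,u_6]]$ onto $S$ via $u_1,\ldots,u_6 \mapsto x^3, x^2y, y^3, y^2z, z^3, z^2x$ and recording the evident relations $u_2^3 = u_1^2 u_3$, $u_4^3 = u_3^2 u_5$, $u_6^3 = u_5^2 u_1$, and $u_2 u_4 u_6 = u_1 u_3 u_5$. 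Since $S$ is a $3$-dimensional normal subring of $k[[x,y,z]]$ containing the s.o.p. $x^3,y^3,z^3$, the quotient $R=S/(x^3)$ is a $2$-dimensional local ring, and in $R$ the element $u_2=x^2y$ becomes cube-nilpotent ($u_2^3 = u_1^2 u_3 = 0$). Both $y^3$ and $y^3 + z^{3k}$ are parameters on $R$, so $R/(y^3)$ and $R/(y^3+z^{3k})$ are each $1$-dimensional; for a $1$-dimensional local ring the Hilbert-Kunz multiplicity with respect to the maximal ideal equals the Hilbert-Samuel multiplicity, so in both parts it suffices to compute the ordinary multiplicity via the associativity formula
\[
e(T) \;=\; \sum_{\mf{p} \in \minh{T}} e(T/\mf{p}) \cdot \len{T_{\mf{p}}}{T_{\mf{p}}}.
\]

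For part (i), I would identify the minimal primes of $R/(y^3)\cong S/(x^3,y^3)$ and evaluate each contribution to this sum. After setting $u_1=u_3=0$, the relations collapse to $u_2^3=u_4^3=u_6^3=u_2u_4u_6=0$, so $u_2, u_4, u_6$ are nilpotent and $u_5=z^3$ becomes the unique ``reduced direction.'' The minimal primes are those containing all nilpotents; each $R/\mf{p}$ is a numerical semigroup ring (or DVR) whose multiplicity is computable by inspection, and each local length $\len{}{T_{\mf{p}}}$ accounts for the nilpotent thickening contributed by $u_2, u_4, u_6$ and their pairwise products. Careful bookkeeping of these contributions is expected to yield exactly $11$.

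For part (ii), the analogous computation starts from $R/(y^3+z^{3k})$, in which the defining relation becomes $u_3 = -u_5^k$. Substituting, the relations of $S$ reduce to $u_2^3 = u_1^2(-u_5^k) = 0$ (still nilpotent, since $u_1=0$), $u_6^3=0$, and crucially $u_4^3 = u_3^2 u_5 = u_5^{2k+1}$, i.e.\ $u_4^3 - u_5^{2k+1}=0$. The minimal prime structure is therefore controlled by the factorization pattern of $t^3 - u_5^{2k+1}$, which depends on $\gcd(3, 2k+1)$; this is exactly $1$ when $k\not\equiv 1 \pmod 3$ and $3$ when $k \equiv 1 \pmod 3$, altering both the number of minimal primes and their residue-ring multiplicities. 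Running the associativity formula through this altered data produces a different total, giving $e(R/(y^3+z^{3k})) \ne 11$ for the excluded residue classes, and $1$-dimensionality again upgrades the Hilbert-Samuel value to the Hilbert-Kunz value.

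The main obstacle is the careful accounting of embedded/nilpotent contributions in each localization: the rings involved are non-reduced non-Cohen-Macaulay, so the lengths $\len{}{T_{\mf{p}}}$ are not just $1$, and one must track how the nilpotents $u_2, u_4, u_6$ (and their pairwise products) survive in each local ring. The congruence condition on $k$ is not a byproduct of the method but is intrinsically required: it is exactly the condition under which $t^3 - u_5^{2k+1}$ factors in the predictable way that makes the associativity sum transparently differ from $11$, while $k \equiv 1 \pmod 3$ could in principle produce the coincidence $e = 11$ and must therefore be excluded from the statement.
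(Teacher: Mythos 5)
This statement is not proved in the paper: it is quoted as Proposition 4.4 of \cite{polstra_smirnov_2018}, attributed there to Hochster. The surrounding discussion in Example \ref{dimfail2} is devoted solely to explaining why the hypotheses of Theorems \ref{HSper} and \ref{HKThm} fail for $I = (y^3) \subset R$ --- there is no paper-internal verification of (i) or (ii) for your proposal to match against.

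Viewed on its own terms, your framework is the right one: reduce each Hilbert--Kunz multiplicity to a Hilbert--Samuel multiplicity via one-dimensionality of the quotient, then invoke the associativity formula. But the proposal halts at exactly the step that constitutes the proof. The local lengths in the associativity formula --- $\lambda\bigl((R/(y^3))_{\mathfrak{p}}\bigr)$ in (i) and the analogous localized lengths in (ii) --- are never computed; you write that "careful bookkeeping \ldots is expected to yield exactly $11$" and that running the formula in (ii) "produces a different total," but neither number is exhibited. Since these lengths \emph{are} the entire numerical content of the claim, what is written is an outline, not a proof. The proposal also contains a factual slip: $S$ is not normal. The affine semigroup generated by $(3,0,0),(2,1,0),(0,3,0),(0,2,1),(0,0,3),(1,0,2)$ is not saturated --- the lattice point $(1,1,1)$ lies in the group and the rational cone (it is $\tfrac{1}{3}$ of the sum of the three axis generators), but it is not in the semigroup, since the first-coordinate constraint $3a_1 + 2a_2 + a_6 = 1$ forces $a_6 = 1$, $a_1 = a_2 = 0$, making the third coordinate $a_4 + 3a_5 + 2 = 1$ impossible for nonnegative integers. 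This is not load-bearing for the calculation, but it signals that the structural claims should be double-checked rather than assumed. Likewise, the observation that $u_4^3 - u_5^{2k+1}$ governs the minimal prime decomposition in (ii) is a sensible heuristic, but you still must verify that the refactored associativity total actually avoids $11$ for every $k \not\equiv 1 \bmod 3$; "different factorization pattern" does not by itself preclude a numerical coincidence, which is presumably why the original authors excluded the $k\equiv 1$ case in the first place.
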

\vspace{.1in}
\indt We have $$\hgy{1}{y^3; R} = \col{0}{y^3}{R}.$$  As noted in \cite{polstra_smirnov_2018}, $z^3$ is a parameter on $R/(y^3),$ which has dimension $1$.  Observe that, in $R$, $z^{3n}y^3 \neq 0$ for any $n$ --- in particular, $z^3 \not \in \sqrt{\ann{R}{\hgy{1}{y^3; R}}}.$ Therefore $\dim R/\ann{R}{\hgy{1}{y^3; R}} \ge 1.$  Since $R/(y^3)$ has dimension $1$, we conclude that
\begin{align*}
	\dim R/\ann{R}{\hgy{1}{y^3; R}} = \dim R/(y^3) = 1
\end{align*} 
and, once again we see that $\mathbf{H}_1$ does not satisfy the condition of theorems \ref{HSper} and \ref{HKThm}. \\
\indt In \cite{polstra_smirnov_2018} the authors show that the radical of the ideal we are perturbing, $$\sqrt{y^3R} = \left(x^2y, y^3, y^2z, z^2x\right)R,$$ is a prime ideal.  In particular, condition (iii) of theorem \ref{HKThm} also fails to hold for this example.
\end{emple}
\subsection{Further Directions}
\begin{question}
When $\unideal{f} = I \subset R$ satisfies the conditions of theorem \ref{HSper}, there are equalities
\begin{align*}
	\sum_{\mf{p} \in \minh{R/I}} e\left(R/\mf{p}\right)\len{R}{\left(R/I\right)_{\mf{p}}} = \sum_{\mf{q} \in \minh{R/\unideal{f+\epsilon}}} e\left(R/\mf{q}\right)\len{R}{\left(R/\unideal{f+\epsilon}\right)_{\mf{q}}}
\end{align*}
for all sufficiently small perturbations, $\unideal{f+\epsilon},$ of $I$.  What can be said about the behavior of the sets $\minh{\unideal{f+\epsilon}} \subset \spec{R}$ as we range over small perturbations of $I$?  The equality above certainly constrains the size of these sets.  Note that in example \ref{notiso} the perturbations described are all prime and all distinct --- in particular, the number of distinct primes appearing the sets $\minh{\unideal{f+\epsilon}}$ can be infinite.  \\
\end{question}
\begin{question}
 The equidimensionality and generic reducedness assumptions are only used in the proof of proposition \ref{HkGrowth} to invoke the Cohen-Gabber structure theorem, so that we have a generically \'etale Cohen extension to work with.  Are there any obvious reasons to expect these conditions to have anything to do with the conclusion?   Is there some way to drop them? 
\end{question}
\begin{question}
How fundamental is the generic vanishing of $\mathbf{H}_1(I)$ to the behavior of perturbations if $I$?  For example, is theorem \ref{HSper} the best possible?  Another way to ask this question is: is the $\mathbf{H}_1$ vanishing condition introduced in section \ref{techcon} the 'correct' general condition on $I \subset R$?  
\end{question}
\begin{question}
We made an effort to formulate lemmas \ref{dLem} and \ref{mainlemma} in more general form than needed for this specific project; what other applications do these results have?  
\end{question}
\begin{question}
It is known that the $F$-singularities of $\mathbb{Q}$-gorenstein rings behave particularly well under $\mf{m}$-adic perturbations (e.g. see \cite{AIstab} and \cite{polstra2020fpurity}).  How do the test ideals, Cartier algebras, and other constructions from $F$-singularity theory behave under perturbations in the $\mathbb{Q}$-gorenstein (or Gorenstein) case? 
\end{question}
\begin{question}
In \cite{taylor2018interpolating} William Taylor has introduced a multiplicity function that interpolates between the Hilbert-Samuel and Hilbert-Kunz multiplicities.  It is very natural to wonder if our methods might be extended to get results analogous to theorem \ref{HSper} or theorem \ref{HKThm} for s-multiplicity.
\end{question}
\section{Bibliography}
\bibliography{Pert_NLR}
\bibliographystyle{alpha}
\end{document}